\documentclass[final,1p,times,authoryear]{elsarticle}

\usepackage[T1]{fontenc}
\usepackage[utf8]{inputenc}
\usepackage[english]{babel}
\usepackage{amsmath,amssymb,amsfonts,amsthm,mathtools}
\usepackage{microtype}
\usepackage{booktabs}
\usepackage{enumitem}
\usepackage[bookmarks=false,hidelinks]{hyperref}

\allowdisplaybreaks

\newcommand{\sinc}{\operatorname{sinc}}

\newcommand{\C}{\mathbb{C}}
\newcommand{\Z}{\mathbb{Z}}
\newcommand{\R}{\mathbb{R}}
\newcommand{\e}{\mathrm e}
\newcommand{\dd}{\,\mathrm d}
\newcommand{\Fcal}{\mathcal F}
\newcommand{\ip}[2]{\left\langle #1,#2\right\rangle}
\newcommand{\norm}[1]{\left\lVert #1\right\rVert}

\newtheorem{theorem}{Theorem}[section]
\newtheorem{lemma}[theorem]{Lemma}
\newtheorem{proposition}[theorem]{Proposition}
\newtheorem{corollary}[theorem]{Corollary}
\theoremstyle{definition}

\newtheorem{example}[theorem]{Example}
\theoremstyle{remark}
\newtheorem{remark}[theorem]{Remark}
\newtheorem{conjecture}[theorem]{Conjecture}

\begin{document}

\begin{frontmatter}

\title{Perturbed Exponential Systems, Pattern-Dependent Shifted-Sinc Operators, and Matrix Symbols on Finite Unions of Intervals}

\author[second]{Pierluigi Vellucci}
\author[third]{Gohin Shaikh Samad}

\affiliation[second]{organization={Department of Economics, Roma Tre University}}
\affiliation[third]{organization={Department of Mathematical Science, McNeese State University}}

\begin{abstract}
We study perturbed exponential systems and unions of shifted frequency lattices through shifted-sinc operators and finite-dimensional fiberizations. For the auxiliary binary local-shift model, we identify the exact pattern-dependent invariant
$$
\rho_{\delta,s}=\|P_-T_{2\delta}P_+\|,
$$
which gives the optimal squared synthesis bounds $1\pm\rho_{\delta,s}$. For periodic patterns, the synthesis operator reduces exactly to a single constant Vandermonde-type matrix, yielding finite-dimensional basis criteria and optimal bounds. We further prove that the worst binary lower bound is the infimum of its periodic counterparts; at the critical value $\delta=1/4$, a step pattern is governed by a half-shifted Hilbert matrix and has vanishing lower bound, although every fixed periodic pattern remains invertible.

We also combine exact bounds for structured patterns with a support-independent perturbation estimate, obtaining explicit stability neighborhoods. For finite unions of bounded intervals and finitely many shifted lattices, folding modulo one yields finitely many rectangular phase matrices whose singular values and ranks give the exact bounds and complete Riesz-sequence, frame, and Riesz-basis criteria. This framework also yields finite Hadamard constraints for periodic spectra and an exact reduction of rationally commensurate multirate sampling.
\end{abstract}

\begin{keyword}
perturbed exponentials \sep shifted-sinc operators \sep periodic perturbations \sep Riesz bases \sep Gram operators \sep matrix symbols \sep finite unions of intervals \sep variable-dimensional fibers
\end{keyword}

\end{frontmatter}

\section{Introduction}\label{sec:intro}

Perturbations of the standard exponential system naturally produce sinc kernels and discrete operators whose shifts depend on the frequency index. Closely related matrix structures also arise when exponential systems are restricted to disconnected spatial domains. A common shifted-sinc mechanism underlies these problems, but the corresponding operators encode different geometric information. One of the main purposes of the present paper is to keep these structures distinct while developing a common operator-theoretic framework for their analysis.

The first object is the genuine Gram operator associated with an index-dependent perturbed exponential family. If the frequencies are
$$
\lambda_n=n+\delta_n,
$$
then the Gram kernel is
$$
\sinc_\pi(n-m+\delta_n-\delta_m),
$$
and therefore depends simultaneously on the perturbations attached to both indices. This two-index dependence is essential and distinguishes the genuine Gram problem from the auxiliary shifted-sinc model that will be used throughout the paper.

The second object is the sign-modulated shifted-sinc synthesis operator
$$
(N_\delta a)_m
=
\sum_{n\in\Z}
a_n\sinc_\pi(n-m+s_n\delta),
\qquad
s_n\in\{-1,+1\},
$$
whose shift depends only on the input index. Equivalently,
$$
N_\delta=T_\delta P_+ + T_{-\delta}P_-,
$$
where $P_+$ and $P_-$ are the coordinate projections determined by the sign pattern. This auxiliary model is not itself the Gram operator of the perturbed family, but it admits an exact block structure that isolates the role of the sign pattern and makes possible a detailed spectral analysis.

A third structure appears for exponential systems generated by unions of shifted lattices on disconnected domains. After folding modulo one, the synthesis problem is represented by finite-dimensional matrix fibers. For unions of intervals of unequal lengths the fiber dimension may vary with the base point, but only finitely many different fibers occur. Thus the infinite-dimensional problem is again governed by explicit finite-dimensional data, now through a geometric rather than a periodic reduction.

These three settings are linked by the same shifted-sinc and Fourier-fiber mechanisms, but they answer different questions. In the perturbative problem the main issue is how the arrangement of local frequency shifts affects stability. In the disconnected-domain problem the relevant information is encoded by the integer fibers of the spatial set and by the corresponding phase matrices. The results below show that periodicity or finite-cell geometry can collapse either problem to exact finite-dimensional calculations, whereas genuinely nonperiodic configurations retain an essentially infinite-dimensional component.

\subsection{Main results and organization}

The first part of the paper concerns perturbed exponential systems and local frequency shifts. Section~\ref{sec:gram} introduces the genuine Gram framework and records the distinction between the actual Gram operator and the auxiliary models used later. Section~\ref{sec:shifted-group} develops the shifted-sinc operators
$$
T_t,
$$
showing that they form a unitary group on $\ell^2(\Z)$. This group structure provides the common operator language used throughout the perturbative part of the paper.

Section~\ref{sec:sign-model} then studies the auxiliary binary operator
$$
N_\delta=T_\delta P_+ + T_{-\delta}P_-.
$$
Besides estimates that are uniform over all sign patterns, the fixed-pattern problem admits an exact description. Section~\ref{sec:exact-pattern} proves that, if
$$
\rho_{\delta,s}
=
\|P_-T_{2\delta}P_+\|,
$$
then
$$
\inf\sigma(N_\delta^*N_\delta)
=
1-\rho_{\delta,s},
\qquad
\sup\sigma(N_\delta^*N_\delta)
=
1+\rho_{\delta,s}.
$$
Thus a single pattern-dependent compression norm determines both optimal squared synthesis bounds. In particular, $\rho_{\delta,s}<1$ is exactly the lower-stability condition for the synthesis operator onto its closed range.

Periodicity produces a further and stronger reduction. For a $p$-periodic binary pattern, Section~\ref{sec:periodic-binary} proves a factorization
$$
\mathcal N_{p,\delta,s}(\xi)
=
H_{p,\delta,s}E_{\delta,s}(\xi)V^*,
$$
in which the last two factors are unitary. Consequently, all singular values are independent of the fiber variable $\xi$, and the optimal bounds are the extreme eigenvalues of the single finite Gram matrix
$$
H_{p,\delta,s}^*H_{p,\delta,s}.
$$
The basis property is equivalent to a finite Vandermonde noncollision condition. This gives, in particular, the exact alternating bounds
$$
1\pm\sin(\pi\delta)
$$
and an explicit period-three example with optimal squared bounds
$$
1\pm\frac{\sqrt5}{3}
$$
at $\delta=1/4$.

The relation between periodic and nonperiodic binary patterns is developed further in Section~\ref{subsec:periodic-extremizers}. If $A_{\rm bin}(\delta)$ denotes the worst lower squared synthesis bound over all binary patterns and $A_p(\delta)$ the corresponding minimum over $p$-periodic patterns, we prove
$$
A_{\rm bin}(\delta)
=
\inf_{p\geq1}A_p(\delta).
$$
Thus the global extremal problem can be approximated by finite-dimensional periodic problems, even when no fixed period realizes the worst behavior. At the critical value $\delta=1/4$, a step pattern leads to a half-shifted Hilbert matrix and has vanishing lower bound, whereas every fixed periodic binary pattern remains invertible. Hence the critical loss of stability is invisible at every fixed period and emerges only asymptotically as the period tends to infinity. This leads naturally to finite-dimensional extremal questions and to a conjectural logarithmic critical degeneration, motivated by the asymptotics of finite Hilbert matrices.

We then return to the genuine Gram problem. Sections~\ref{sec:sesquilinear} and~\ref{sec:near-critical} separate the two mechanisms that enter near the critical regime. A four-block sesquilinear model first isolates the interaction of two sign partitions. The genuine Gram expansion then distinguishes same-sign drift from opposite-sign half-shift interaction and yields support-dependent first-order estimates. Section~\ref{sec:structured-neighborhood} gives a complementary support-independent perturbation theorem around any already stable structured exponential basis. The two results address different aspects of stability: the former retains detailed first-order information about the perturbation, whereas the latter provides a global neighborhood of a fixed stable system.

Section~\ref{sec:finite-alphabet} extends the local-shift theory from the binary alphabet $\{+\delta,-\delta\}$ to finitely many shifts $t_1,\ldots,t_L$. If $P_j$ denotes the coordinate projection associated with the shift $t_j$, the Gram blocks are
$$
G_{jk}=P_jT_{t_k-t_j}P_k.
$$
A matrix of block norms yields a general sufficient lower-stability criterion. The binary case is exceptional because its self-adjoint off-diagonal structure reduces the two spectral endpoints to a single scalar norm; this symmetry does not persist for three or more local shifts. Under periodicity, however, the finite-alphabet problem again reduces exactly to a single constant Vandermonde-type matrix.

The second part of the paper concerns disconnected spatial domains. We begin with two full shifted lattices on a union of two unit intervals. Sections~\ref{sec:integer-twointerval} and~\ref{sec:noninteger-twointerval} treat respectively integer and noninteger translations and reduce the synthesis operator to explicit $2\times2$ matrix fibers. This gives optimal bounds and complete basis criteria in both regimes.

These calculations are instances of a more general folding principle. Section~\ref{sec:finite-unions} first extends the construction to finitely many translated unit intervals and then to arbitrary finite unions of bounded intervals. For
$$
E=\bigcup_{j=1}^J[a_j,b_j),
\qquad
\Lambda=\bigcup_{k=1}^K(\Z+\delta_k),
$$
we associate to each $x\in[0,1)$ the integer fiber
$$
\Omega_x
=
\{m\in\Z:x+m\in E\}.
$$
Only finitely many different sets $\Omega_x$ occur on positive-measure cells. On a cell with fiber $\Omega_r$, the synthesis operator is represented by the finite phase matrix
$$
M_r
=
\bigl(\e^{2\pi i\delta_km}\bigr)_{
m\in\Omega_r,\,
1\leq k\leq K}.
$$
The extreme singular values of these matrices give the exact quantitative bounds, while their column rank, row rank, and square invertibility characterize respectively Riesz sequences, frames, and Riesz bases. This formulation also makes clear why total measure alone is insufficient: the local fiber multiplicity must be compatible with the number of frequency channels.

The finite-cell formulation has two further applications. In Section~\ref{subsec:fuglede-fibers}, periodicity of spectra for finite unions of intervals converts spectrality into simultaneous finite Hadamard conditions on the integer fibers. Comparing adjacent fibers then produces arithmetic restrictions on possible spectra and, under suitable transition hypotheses, a rigidity mechanism related to the one-dimensional Fuglede problem. In Section~\ref{subsec:commensurate-multirate}, a finite collection of rationally commensurate sampling lattices is decomposed into cosets of a common lattice. The resulting multirate system is therefore governed by the same finite-cell phase matrices, giving exact existence criteria, optimal condition numbers, and an explicit reconstruction procedure.

The paper concludes by separating the regimes in which exact finite-dimensional reductions are available from the genuinely nonperiodic problems that remain infinite-dimensional. Among the latter are the geometric characterization of
$$
\rho_{\delta,s}
=
\|P_-T_{2\delta}P_+\|
$$
for arbitrary nonperiodic patterns, the critical asymptotics of the periodic extremal problem, a support-independent near-critical theory for the genuine Gram operator, and extensions of the finite-cell framework beyond rationally commensurate lattice systems.

\section{The perturbed exponential Gram framework}\label{sec:gram}

\subsection{The one-dimensional Gram operator}

Let $(\delta_n)_{n\in\Z}$ be a real sequence. We consider the
exponential family
\begin{equation}\label{eq:B}
\mathcal B(\delta)
=
\bigl\{\e^{2\pi i(n+\delta_n)x}\bigr\}_{n\in\Z}
\subset L^2\left(-\tfrac12,\tfrac12\right).
\end{equation}
No smallness assumption is imposed at this stage.
Whenever a uniform stability result requires
$$\sup_{n\in\Z}|\delta_n|<\frac14,$$
this hypothesis will be stated explicitly. For finitely supported coefficient sequences, all Gram identities below
are valid without any smallness assumption on $(\delta_n)$.

We use the normalized sinc function
$$
\sinc_\pi(u):=
\begin{cases}
\dfrac{\sin(\pi u)}{\pi u},&u\neq0,\\[1ex]
1,&u=0.
\end{cases}
$$

Recall that $\mathcal B$ is a Riesz sequence if there exist constants $A,B>0$ such that, for every finitely supported sequence $\mathbf a=(a_n)_{n\in\Z}$,
\begin{equation}\label{eq:riesz-seq}
A\sum_{n\in\Z}|a_n|^2
\leq
\norm{\sum_{n\in\Z}a_n\e^{2\pi i(n+\delta_n)x}}_{L^2(-1/2,1/2)}^2
\leq
B\sum_{n\in\Z}|a_n|^2.
\end{equation}

Expanding the $L^2$ norm gives
\begin{align}
\norm{\sum_{n\in\Z}a_n\e^{2\pi i(n+\delta_n)x}}_{L^2(-1/2,1/2)}^2
&=\sum_{n,m\in\Z}a_n\overline{a_m}
\int_{-1/2}^{1/2}\e^{2\pi i(n-m+\delta_n-\delta_m)x}\dd x\notag\\
&=\sum_{n,m\in\Z}a_n\overline{a_m}
\sinc_\pi(n-m+\delta_n-\delta_m).
\label{eq:gram-kernel}
\end{align}
Accordingly, the Gram operator $G:\ell^2_0(\Z)\to\ell^2(\Z)$ is formally given by
\begin{equation}\label{eq:gram-operator}
(G\mathbf a)_m:=\sum_{n\in\Z}
\sinc_\pi(n-m+\delta_n-\delta_m)a_n,
\qquad m\in\Z,
\end{equation}
and \eqref{eq:gram-kernel} reads
$$
\norm{\sum_{n\in\Z}a_n\e^{2\pi i(n+\delta_n)x}}_{L^2(-1/2,1/2)}^2
=\ip{G\mathbf a}{\mathbf a}_{\ell^2}.
$$

Since
$$
\sinc_\pi(k+u)=(-1)^k\frac{\sin(\pi u)}{\pi(k+u)},
\qquad k\in\Z,\quad u\in\R,
$$
it is convenient to remove the oscillatory factor $(-1)^{n-m}$ by the unitary modulation
$$
(M\mathbf a)_n:=(-1)^n a_n,
\qquad n\in\Z.
$$
Define the unitarily equivalent operator $\widetilde G:=MGM$. Then
$$
\ip{G\mathbf a}{\mathbf a}_{\ell^2}
=\ip{\widetilde G(M\mathbf a)}{M\mathbf a}_{\ell^2},
$$
and the kernel of $\widetilde G$ is
\begin{equation}\label{eq:modulated-kernel}
\widetilde K_{m,n}=
\begin{cases}
1,&n=m,\\[1ex]
\dfrac{\sin(\pi(\delta_n-\delta_m))}
{\pi(n-m+\delta_n-\delta_m)},&n\neq m.
\end{cases}
\end{equation}
Thus the original Gram form and the modulated form are equivalent up to the unitary map $M$.

\begin{remark}\label{rem:auxiliary-distinction}
If $\delta_n=s_n\delta$ with $s_n\in\{+1,-1\}$, the genuine Gram kernel becomes
\begin{equation}\label{eq:true-sign-gram}
\sinc_\pi\bigl(n-m+\delta(s_n-s_m)\bigr).
\end{equation}
By contrast, the auxiliary sign operator studied in Section~\ref{sec:sign-model} has kernel
\begin{equation}\label{eq:aux-sign-kernel}
\sinc_\pi(n-m+s_n\delta).
\end{equation}
The first expression depends on both $s_n$ and $s_m$, whereas the second depends only on $s_n$. The auxiliary model is motivated by the Gram problem but is not the Gram operator itself.
\end{remark}

\subsection{Multidimensional shifted sinc operators}

For $\boldsymbol\theta=(\theta_1,\dots,\theta_d)\in\R^d$ and finitely supported $\mathbf a=(a_{\mathbf n})_{\mathbf n\in\Z^d}$, define
\begin{equation}\label{eq:DHT-dim}
(T_{\boldsymbol\theta}\mathbf a)_{\mathbf m}
:=\sum_{\mathbf n\in\Z^d}a_{\mathbf n}
\prod_{j=1}^d\sinc_\pi(n_j-m_j+\theta_j).
\end{equation}

\begin{lemma}\label{lem:innerprod}
Let $Q^d=[-\tfrac12,\tfrac12]^d$. Let $\mathbf a,\mathbf b\in\ell^2_0(\Z^d)$ and $\mathbf s,\mathbf t\in\R^d$. Then
\begin{align}
&\ip{\sum_{\mathbf n\in\Z^d}a_{\mathbf n}\e^{2\pi i\langle\mathbf n+\mathbf s,x\rangle}}
{\sum_{\mathbf m\in\Z^d}b_{\mathbf m}\e^{2\pi i\langle\mathbf m+\mathbf t,x\rangle}}_{L^2(Q^d)}\notag\\
&\qquad=\sum_{\mathbf n,\mathbf m\in\Z^d}a_{\mathbf n}\overline{b_{\mathbf m}}
\prod_{j=1}^d\sinc_\pi(n_j-m_j+s_j-t_j)\notag\\
&\qquad=\ip{T_{\mathbf s-\mathbf t}\mathbf a}{\mathbf b}_{\ell^2(\Z^d)}.
\label{eq:innerprod}
\end{align}
\end{lemma}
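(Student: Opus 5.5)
Since $\mathbf a$ and $\mathbf b$ are finitely supported, both sums are finite trigonometric polynomials, so sesquilinearity lets us move the inner product inside both sums with no convergence issues:
$$
\ip{\sum_{\mathbf n}a_{\mathbf n}\e^{2\pi i\langle\mathbf n+\mathbf s,x\rangle}}{\sum_{\mathbf m}b_{\mathbf m}\e^{2\pi i\langle\mathbf m+\mathbf t,x\rangle}}_{L^2(Q^d)}
=\sum_{\mathbf n,\mathbf m}a_{\mathbf n}\overline{b_{\mathbf m}}\int_{Q^d}\e^{2\pi i\langle\mathbf n-\mathbf m+\mathbf s-\mathbf t,x\rangle}\dd x .
$$
So the first equality reduces to evaluating a single exponential integral over the cube.

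The exponential factors as $\prod_{j=1}^d\e^{2\pi i(n_j-m_j+s_j-t_j)x_j}$, and $Q^d$ is a product of intervals. By Fubini the integral therefore splits into one-dimensional integrals $\int_{-1/2}^{1/2}\e^{2\pi i u x}\dd x$ with $u=n_j-m_j+s_j-t_j$. For $u\neq0$ this integral equals $(\e^{\pi i u}-\e^{-\pi i u})/(2\pi i u)=\sin(\pi u)/(\pi u)$, and for $u=0$ it equals $1$. In both cases it is $\sinc_\pi(u)$, the same one-dimensional computation already used in \eqref{eq:gram-kernel}. This gives the first equality.

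For the second equality, take \eqref{eq:DHT-dim} with $\boldsymbol\theta=\mathbf s-\mathbf t$. Because $\mathbf a$ is finitely supported, each $(T_{\mathbf s-\mathbf t}\mathbf a)_{\mathbf m}$ is a finite sum, so $\ip{T_{\mathbf s-\mathbf t}\mathbf a}{\mathbf b}=\sum_{\mathbf m}(T_{\mathbf s-\mathbf t}\mathbf a)_{\mathbf m}\overline{b_{\mathbf m}}$ is a finite double sum. Writing it out gives exactly the middle expression.

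There is no genuine obstacle here. The only point needing care is that the sinc argument is $n_j-m_j+s_j-t_j$, matching the convention $\sinc_\pi(n_j-m_j+\theta_j)$ in \eqref{eq:DHT-dim}. This holds because the conjugated exponential carries $\mathbf m+\mathbf t$ with a minus sign. Finite support makes every interchange of sum and integral trivial.
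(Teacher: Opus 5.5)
Your proposal is correct and follows the same route as the paper: expand by sesquilinearity, factor the integral over $Q^d$ coordinatewise into one-dimensional integrals equal to $\sinc_\pi(n_j-m_j+s_j-t_j)$, and match the resulting double sum with the definition \eqref{eq:DHT-dim} at $\boldsymbol\theta=\mathbf s-\mathbf t$. You also spell out the one-dimensional evaluation and the finite-support justification, which the paper leaves implicit.
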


\begin{proof}
The integral factorizes coordinatewise:
\begin{align*}
\int_{Q^d}\e^{2\pi i\langle\mathbf n-\mathbf m+\mathbf s-\mathbf t,x\rangle}\dd x
&=\prod_{j=1}^d\int_{-1/2}^{1/2}
\e^{2\pi i(n_j-m_j+s_j-t_j)x_j}\dd x_j\\
&=\prod_{j=1}^d\sinc_\pi(n_j-m_j+s_j-t_j).
\end{align*}
Substituting this identity into the scalar product yields \eqref{eq:innerprod}.
\end{proof}

\section{The shifted sinc group}\label{sec:shifted-group}

For $t\in\R$ and finitely supported $a=(a_n)_{n\in\Z}$, define
\begin{equation}\label{eq:shifted-sinc}
(T_ta)_m:=\sum_{n\in\Z}a_n\sinc_\pi(n-m+t),
\qquad m\in\Z.
\end{equation}
The one-parameter family of shifted-sinc operators considered here is closely related to the discrete Hilbert-transform group studied by \cite{DeCarliSamad2016}. More precisely, if $\mathcal D_t$ denotes the family introduced in that work and $M$ is the unitary modulation $(Ma)_n=(-1)^n a_n$, then
$$
\mathcal D_t=MT_{-t}M,
\qquad t\in\mathbb R.
$$
Thus the group law, strong continuity, and $\ell^2$-isometry properties are already known in an equivalent normalization.

We recall the Fourier-multiplier representation and the resulting group properties below, since this formulation is particularly convenient for the pattern-dependent compressions and finite matrix reductions developed in the sequel.

Let $\Fcal:\ell^2(\Z)\to L^2(-\tfrac12,\tfrac12)$ be the discrete Fourier transform
\begin{equation}\label{eq:DFT}
(\Fcal a)(\xi)=\sum_{m\in\Z}a_m\e^{-2\pi i m\xi},
\qquad -\frac12<\xi<\frac12.
\end{equation}
\begin{theorem}[Fourier multiplier representation]\label{thm:multiplier}
For every $t\in\R$, the operator $T_t$ extends uniquely to a unitary operator on $\ell^2(\Z)$ and
\begin{equation}\label{eq:multiplier}
\Fcal(T_ta)(\xi)=\e^{-2\pi i t\xi}\Fcal a(\xi)
\end{equation}
for almost every $\xi\in(-\tfrac12,\tfrac12)$.
\end{theorem}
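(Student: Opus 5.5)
The approach is to compute the Fourier coefficients of the multiplier $\xi\mapsto \e^{-2\pi i t\xi}$ on $(-\tfrac12,\tfrac12)$ and check that they are exactly the sinc kernel of \eqref{eq:shifted-sinc}. With the convention \eqref{eq:DFT}, the $k$-th coefficient of a function $\phi\in L^2(-\tfrac12,\tfrac12)$ is $\int_{-1/2}^{1/2}\phi(\xi)\e^{2\pi i k\xi}\dd\xi$. For $\phi_t(\xi)=\e^{-2\pi i t\xi}$ this gives
$$
\int_{-1/2}^{1/2}\e^{2\pi i(k-t)\xi}\dd\xi=\sinc_\pi(k-t),
$$
which is the one-dimensional case of the computation in Lemma~\ref{lem:innerprod}.

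Next I would verify \eqref{eq:multiplier} for finitely supported $a$. Here $\Fcal a$ is a trigonometric polynomial, so $\phi_t\Fcal a\in L^2$. Its $m$-th coefficient is
$$
\sum_n a_n\int_{-1/2}^{1/2}\e^{-2\pi i n\xi}\e^{-2\pi i t\xi}\e^{2\pi i m\xi}\dd\xi=\sum_n a_n\sinc_\pi(m-n-t).
$$
Because $\sinc_\pi$ is even, this equals $\sum_n a_n\sinc_\pi(n-m+t)=(T_ta)_m$. Hence $T_ta=\Fcal^{-1}(\phi_t\Fcal a)$, and in particular $T_ta\in\ell^2$. This identification is exact for finitely supported $a$, so no convergence issues arise at this stage. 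The only care needed is the sign and orientation convention, which the evenness of sinc resolves.

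Since $\Fcal$ is unitary by Parseval and $|\phi_t|=1$, we get $\|T_ta\|=\|a\|$ on the dense subspace $\ell^2_0(\Z)$. Therefore $T_t$ extends uniquely by continuity to an isometry on $\ell^2(\Z)$, namely $\Fcal^{-1}M_{\phi_t}\Fcal$, where $M_{\phi_t}$ is multiplication by $\phi_t$. Multiplication by the unimodular function $\phi_t$ is unitary on $L^2$, with inverse multiplication by $\overline{\phi_t}=\phi_{-t}$. So the extension is surjective, hence unitary, and \eqref{eq:multiplier} holds almost everywhere for all $a\in\ell^2$ by continuity of both sides in $L^2$.

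There is no real obstacle here. The only subtle point is uniqueness together with the fact that the series defining $(T_ta)_m$ for general $a\in\ell^2$ agrees with the extension. I would remark that for fixed $m$ the kernel $(\sinc_\pi(n-m+t))_n$ lies in $\ell^2$ in $n$, so the series converges absolutely-in-$\ell^2$ pairing sense, and equals $\ip{\Fcal^{-1}(\phi_t\Fcal a)}{e_m}$ by continuity.
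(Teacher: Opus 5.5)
Your proposal is correct and follows essentially the same route as the paper: both identify $T_t$ on $\ell^2_0(\Z)$ with $\Fcal^{-1}\mathcal M_{h_t}\Fcal$ via the integral $\int_{-1/2}^{1/2}\e^{2\pi i(m-n-t)\xi}\dd\xi=\sinc_\pi(m-n-t)$ and the evenness of $\sinc_\pi$, obtain unitarity from $|\e^{-2\pi i t\xi}|=1$, and get uniqueness from density. Your closing remark is a small addition the paper does not spell out: for every $a\in\ell^2(\Z)$ the defining series converges because each kernel row is square-summable, and it agrees with the extension.
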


\begin{proof}
Let
$$
h_t(\xi)=\e^{-2\pi i t\xi},
\qquad -\frac12<\xi<\frac12,
$$
and let $\mathcal M_{h_t}$ denote multiplication by $h_t$ on
$L^2(-\tfrac12,\tfrac12)$. Define
$$
\widetilde T_t
:=
\Fcal^{-1}\mathcal M_{h_t}\Fcal.
$$
Since $|h_t(\xi)|=1$ almost everywhere, $\widetilde T_t$ is unitary
on $\ell^2(\Z)$.

It remains to verify that $\widetilde T_t$ agrees with
\eqref{eq:shifted-sinc} on finitely supported sequences. Let
$a\in\ell^2_0(\Z)$. For every $m\in\Z$,
\begin{align*}
(\widetilde T_ta)_m
&=
\int_{-1/2}^{1/2}
\e^{-2\pi i t\xi}
(\Fcal a)(\xi)
\e^{2\pi i m\xi}\,\dd\xi\\
&=
\sum_{n\in\Z}a_n
\int_{-1/2}^{1/2}
\e^{2\pi i(m-n-t)\xi}\,\dd\xi\\
&=
\sum_{n\in\Z}a_n
\sinc_\pi(m-n-t)\\
&=
\sum_{n\in\Z}a_n
\sinc_\pi(n-m+t).
\end{align*}
The interchange between the finite sum and the integral is immediate.
Thus $\widetilde T_t=T_t$ on $\ell^2_0(\Z)$, and consequently
$$
\Fcal(T_ta)(\xi)
=
\e^{-2\pi i t\xi}\Fcal a(\xi).
$$
The uniqueness of the extension follows from the density of
$\ell^2_0(\Z)$ in $\ell^2(\Z)$.
\end{proof}

\begin{remark}[A shift check at $t=1$]\label{rem:t-one}
For $t=1$,
\begin{align}
(T_1a)_m&=\sum_{n\in\Z}a_n\sinc_\pi(n-m+1).
\end{align}
Since $n-m+1$ is an integer, the sinc term vanishes unless $n-m+1=0$, that is, unless $n=m-1$. Hence
\begin{equation}\label{eq:T1-shift}
(T_1a)_m=a_{m-1}.
\end{equation}
The Fourier transform of this shift is
$$\Fcal(T_1a)(\xi)=\sum_m a_{m-1}\e^{-2\pi i m\xi}=\sum_k a_k\e^{-2\pi i(k+1)\xi}=\e^{-2\pi i\xi}\Fcal a(\xi),
$$
in agreement with \eqref{eq:multiplier}.
\end{remark}

\begin{remark}[Convolution convention]\label{rem:convolution}
With the standard discrete convolution
$$
(a*k)_m=\sum_{n\in\Z}a_nk(m-n),
$$
one has
$$(T_ta)_m=\sum_n a_n\sinc_\pi(n-m+t)=\sum_n a_n\sinc_\pi(m-n-t)=\sum_n a_nk_{-t}(m-n),
$$
where
\begin{equation}\label{eq:convolution}
T_ta=a*k_{-t},
\qquad k_{-t}(j)=\sinc_\pi(j-t).
\end{equation}
\end{remark}

\begin{proposition}[Group law, adjoint, and unitarity]\label{prop:group-law}
For all $s,t\in\R$,
\begin{equation}\label{eq:group-law}
T_tT_s=T_{t+s},
\qquad T_t^*=T_{-t},
\qquad \norm{T_ta}_{\ell^2}=\norm{a}_{\ell^2}.
\end{equation}
\end{proposition}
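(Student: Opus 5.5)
The plan is to transfer everything to the Fourier side using Theorem~\ref{thm:multiplier}. That theorem gives, for every $t\in\R$,
$$
T_t=\Fcal^{-1}\mathcal M_{h_t}\Fcal,
\qquad
h_t(\xi)=\e^{-2\pi i t\xi},
$$
where $\mathcal M_{h_t}$ is multiplication by $h_t$ on $L^2(-\tfrac12,\tfrac12)$. Here $\Fcal$ is unitary (Parseval for Fourier series), and the unitary extension of $T_t$ is unique. So each of the three identities in \eqref{eq:group-law} reduces to the corresponding identity for the unimodular multipliers $h_t$, checked pointwise almost everywhere.

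For the group law, the factors $\Fcal\Fcal^{-1}$ cancel in the product:
$$
T_tT_s=\Fcal^{-1}\mathcal M_{h_t}\mathcal M_{h_s}\Fcal=\Fcal^{-1}\mathcal M_{h_th_s}\Fcal .
$$
Since $h_t(\xi)h_s(\xi)=\e^{-2\pi i(t+s)\xi}=h_{t+s}(\xi)$, the right-hand side equals $T_{t+s}$. No convergence issue arises, because all operators are already bounded on $\ell^2(\Z)$.

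For the adjoint, unitarity of $\Fcal$ gives
$$
T_t^*=\Fcal^{*}\mathcal M_{h_t}^*(\Fcal^{-1})^*=\Fcal^{-1}\mathcal M_{\overline{h_t}}\Fcal .
$$
Since $\overline{h_t}=h_{-t}$, this is $T_{-t}$.

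As a consistency check against the kernel definition \eqref{eq:shifted-sinc}, I would also verify the adjoint at the level of matrix entries. The kernel of $T_t$ is $K_t(m,n)=\sinc_\pi(n-m+t)$. The adjoint has kernel $\overline{K_t(n,m)}=\sinc_\pi(m-n+t)$, because sinc is real. By evenness of sinc, $\sinc_\pi(m-n+t)=\sinc_\pi(n-m-t)=K_{-t}(m,n)$. This agrees with the Fourier computation.

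Finally, the isometry follows from $|h_t|=1$ almost everywhere together with Parseval:
$$
\norm{T_ta}_{\ell^2}=\norm{h_t\,\Fcal a}_{L^2}=\norm{\Fcal a}_{L^2}=\norm{a}_{\ell^2}.
$$

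There is no genuine obstacle here. The only point requiring care is that the identities are asserted for the extended operators on all of $\ell^2(\Z)$, whereas \eqref{eq:shifted-sinc} defines $T_t$ only on $\ell^2_0(\Z)$. I would handle this by working throughout with the multiplier representation from Theorem~\ref{thm:multiplier}, which is valid on the whole space and coincides with \eqref{eq:shifted-sinc} on finitely supported sequences. Alternatively, one can verify the identities on the dense subspace $\ell^2_0(\Z)$ and extend them by continuity.
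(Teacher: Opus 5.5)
Your proposal is correct and takes essentially the same route as the paper: conjugating by the unitary $\Fcal$ reduces the group law, the adjoint identity, and the isometry to the pointwise relations $h_th_s=h_{t+s}$, $\overline{h_t}=h_{-t}$, and $|h_t|=1$ for the multipliers of Theorem~\ref{thm:multiplier}. Your kernel-level check of the adjoint, using the evenness of $\sinc_\pi$, is correct but is an extra verification that the paper's argument does not need.
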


\begin{proof}
By Parseval and Theorem~\ref{thm:multiplier},
$$\Fcal(T_tT_sa)(\xi)=\e^{-2\pi i t\xi}\Fcal(T_sa)(\xi)=\e^{-2\pi i(t+s)\xi}\Fcal a(\xi)=\Fcal(T_{t+s}a)(\xi),$$
which gives $T_tT_s=T_{t+s}$. Multiplication by $\e^{-2\pi i t\xi}$ has adjoint multiplication by its complex conjugate $\e^{2\pi i t\xi}$, which is the multiplier of $T_{-t}$, and therefore $T_t^*=T_{-t}$. Finally,
$$\norm{T_ta}_{\ell^2}^2=\norm{\Fcal(T_ta)}_{L^2(-1/2,1/2)}^2=\int_{-1/2}^{1/2}|\e^{-2\pi i t\xi}|^2|\Fcal a(\xi)|^2\dd\xi=\int_{-1/2}^{1/2}|\Fcal a(\xi)|^2\dd\xi=\norm{a}_{\ell^2}^2.$$
\end{proof}

\begin{lemma}[Distance from the identity]\label{lem:distance-identity}
For $0\leq|t|\leq1$,
\begin{equation}\label{eq:distance-identity}
\norm{T_t-I}_{\ell^2\to\ell^2}
=2\sin\left(\frac{\pi|t|}{2}\right).
\end{equation}
In particular,
\begin{equation}\label{eq:distance-2delta}
\norm{T_{2\delta}-I}_{\ell^2\to\ell^2}=2\sin(\pi\delta),
\qquad 0\leq\delta<\frac12.
\end{equation}
\end{lemma}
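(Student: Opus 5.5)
By Theorem~\ref{thm:multiplier}, conjugation by the unitary $\Fcal$ carries $T_t-I$ to multiplication by $h_t(\xi)-1=\e^{-2\pi i t\xi}-1$ on $L^2(-\tfrac12,\tfrac12)$. The plan is therefore to compute the norm of a multiplication operator. Since $\Fcal$ is unitary,
$$
\norm{T_t-I}_{\ell^2\to\ell^2}=\norm{\mathcal M_{h_t-1}}=\operatorname*{ess\,sup}_{|\xi|<1/2}\bigl|\e^{-2\pi i t\xi}-1\bigr|.
$$
Here I use the standard fact that a multiplication operator by a bounded function $g$ has norm exactly $\norm{g}_\infty$. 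The lower bound comes from testing on normalized indicators of small intervals where $|g|$ is close to its supremum.

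Next I compute the function explicitly. From $|\e^{i\theta}-1|=2|\sin(\theta/2)|$ we get
$$
\bigl|\e^{-2\pi i t\xi}-1\bigr|=2\bigl|\sin(\pi t\xi)\bigr|.
$$
For $|\xi|<\tfrac12$ and $|t|\le1$, the argument satisfies $|\pi t\xi|<\pi|t|/2\le\pi/2$. On $[0,\pi/2]$ the map $u\mapsto\sin u$ is increasing, so $2|\sin(\pi t\xi)|=2\sin(\pi|t||\xi|)$ is a continuous, even function of $\xi$ that increases in $|\xi|$. Its essential supremum over the open interval is therefore the limiting value as $|\xi|\to\tfrac12$, namely $2\sin(\pi|t|/2)$. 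This gives \eqref{eq:distance-identity}. The case $t=0$ is trivial.

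For \eqref{eq:distance-2delta}, substitute $t=2\delta$. The condition $0\le\delta<\tfrac12$ gives $|t|<1$, inside the range already covered, so
$$
\norm{T_{2\delta}-I}_{\ell^2\to\ell^2}=2\sin(\pi\delta).
$$

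The only step needing care is the monotonicity. It is exactly the restriction $|t|\le1$ that keeps $\pi|t\xi|$ inside $[0,\pi/2]$. For larger $|t|$ the supremum would instead be $2$, attained in the interior. Apart from that check, the argument is routine.
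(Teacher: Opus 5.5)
Your proposal is correct and follows essentially the same route as the paper. Both arguments transfer to the multiplier $\e^{-2\pi i t\xi}-1$ via Theorem~\ref{thm:multiplier}, rewrite its modulus as $2|\sin(\pi t\xi)|$, and use that $|t|\le1$ keeps $\pi|t\xi|$ in $[0,\pi/2]$, where sine is increasing, so the essential supremum is the limit as $|\xi|\to\tfrac12$; you additionally justify the fact, left implicit in the paper, that a multiplication operator's norm equals the essential supremum of its symbol.
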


\begin{proof}
From \eqref{eq:multiplier},
\begin{equation}\label{eq:distance-sup}
\norm{T_t-I}_{\ell^2\to\ell^2}
=\mathop{\mathrm{ess\,sup}}_{|\xi|\leq1/2}|\e^{-2\pi i t\xi}-1|.
\end{equation}
Moreover,
$$|\e^{-2\pi i t\xi}-1|^2=4\sin^2(\pi t\xi),$$
so
\begin{equation}\label{eq:distance-pointwise}
|\e^{-2\pi i t\xi}-1|=2|\sin(\pi t\xi)|.
\end{equation}
If $|t|\leq1$ and $|\xi|\leq1/2$, then $0\leq\pi|t\xi|\leq\pi/2$, and the sine is increasing on $[0,\pi/2]$. The essential supremum is obtained as $|\xi|\to1/2$, giving
\eqref{eq:distance-identity}. Taking $t=2\delta$ gives \eqref{eq:distance-2delta}.
\end{proof}

\section{The auxiliary sign-modulated shifted sinc operator}\label{sec:sign-model}

Fix $0\leq\delta<\tfrac12$ and a sign pattern $s=(s_n)_{n\in\Z}$ with $s_n\in\{+1,-1\}$. For finitely supported $a=(a_n)_{n\in\Z}$ define
\begin{equation}\label{eq:Ndelta}
(N_\delta a)_m:=\sum_{n\in\Z}a_n\sinc_\pi(n-m+s_n\delta),
\qquad m\in\Z.
\end{equation}
Let
$$
S_+:=\{n\in\Z:s_n=1\},
\qquad
S_-:=\{n\in\Z:s_n=-1\},
$$
and let $P_+$ and $P_-$ be the corresponding coordinate projections.

\begin{lemma}[Block decomposition]\label{lem:block-sign}
For every $a\in\ell^2(\Z)$,
\begin{equation}\label{eq:block-sign}
N_\delta=T_\delta P_+ +T_{-\delta}P_-.
\end{equation}
\end{lemma}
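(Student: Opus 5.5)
The plan is to verify the identity first on finitely supported sequences and then extend it by density and continuity. The statement ``for every $a\in\ell^2(\Z)$'' should be read in the sense that the right-hand side is a bounded operator on $\ell^2(\Z)$ which agrees with the formula \eqref{eq:Ndelta} on $\ell^2_0(\Z)$. This furnishes the (unique) bounded extension of $N_\delta$.

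\textbf{Finitely supported case.} Fix $a\in\ell^2_0(\Z)$ and $m\in\Z$. Since $\Z=S_+\sqcup S_-$, I split the finite sum in \eqref{eq:Ndelta} according to the sign of $s_n$. For $n\in S_+$ the kernel is $\sinc_\pi(n-m+\delta)$, and for $n\in S_-$ it is $\sinc_\pi(n-m-\delta)$. Since $(P_\pm a)_n=a_n\mathbf 1_{S_\pm}(n)$, and both $P_+a$ and $P_-a$ are again finitely supported, the definition \eqref{eq:shifted-sinc} gives
$$
(N_\delta a)_m=\sum_{n\in\Z}(P_+a)_n\sinc_\pi(n-m+\delta)+\sum_{n\in\Z}(P_-a)_n\sinc_\pi(n-m-\delta)=(T_\delta P_+a)_m+(T_{-\delta}P_-a)_m .
$$
This step is pure bookkeeping.

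\textbf{Extension.} By Theorem~\ref{thm:multiplier}, $T_{\pm\delta}$ are unitary, and $P_\pm$ are orthogonal projections. Hence $T_\delta P_++T_{-\delta}P_-$ is bounded on $\ell^2(\Z)$, with norm at most $2$. The identity therefore holds on the dense subspace $\ell^2_0(\Z)$ between $N_\delta$ and a bounded operator, so $N_\delta$ extends uniquely to all of $\ell^2(\Z)$, and \eqref{eq:block-sign} defines that extension.

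\textbf{Pointwise formula.} If one also wants the series \eqref{eq:Ndelta} to converge pointwise for general $a\in\ell^2$, I note that for fixed $m$ the kernels $n\mapsto\sinc_\pi(n-m\pm\delta)$ are square-summable, since $|\sinc_\pi(u)|\lesssim(1+|u|)^{-1}$. Each coordinate sum therefore converges absolutely by Cauchy--Schwarz, and $a\mapsto(N_\delta a)_m$ is a continuous functional. The same splitting argument then gives the identity coordinatewise, consistent with the bounded extension.

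The only mild obstacle is this interpretive point about infinite sequences, and it is handled by the density and continuity argument. Everything else follows directly from the partition $\Z=S_+\sqcup S_-$.
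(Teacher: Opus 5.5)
Your proof is correct and follows the paper's argument: the paper also splits the defining sum over $S_+$ and $S_-$ and reads off $T_\delta P_+$ and $T_{-\delta}P_-$. Your density/continuity extension and the coordinatewise convergence remark add care that the paper leaves implicit at this point; it records the bounded extension separately in Proposition~\ref{prop:uniform-sign}.
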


\begin{proof}
If $n\in S_+$, then
$$
\sinc_\pi(n-m+s_n\delta)=\sinc_\pi(n-m+\delta),
$$
whereas, if $n\in S_-$, then
$$
\sinc_\pi(n-m+s_n\delta)=\sinc_\pi(n-m-\delta).
$$
Splitting the sum in \eqref{eq:Ndelta} according to $S_+$ and $S_-$ gives \eqref{eq:block-sign}.
\end{proof}

\begin{proposition}[Uniform boundedness]\label{prop:uniform-sign}
For every sign pattern and every $0\leq\delta<\tfrac12$, the operator $N_\delta$ extends boundedly to $\ell^2(\Z)$ and
\begin{equation}\label{eq:uniform-sign}
\norm{N_\delta}_{\ell^2\to\ell^2}\leq\sqrt2.
\end{equation}
\end{proposition}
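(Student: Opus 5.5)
Since Lemma~\ref{lem:block-sign} gives $N_\delta=T_\delta P_+ + T_{-\delta}P_-$ on finitely supported sequences, the plan is to read off boundedness from this decomposition. Each summand is a composition of bounded operators: by Theorem~\ref{thm:multiplier} and Proposition~\ref{prop:group-law}, $T_{\pm\delta}$ are unitary on $\ell^2(\Z)$, and $P_\pm$ are orthogonal coordinate projections. The right-hand side therefore defines a bounded operator on all of $\ell^2(\Z)$ that agrees with \eqref{eq:Ndelta} on the dense subspace $\ell^2_0(\Z)$. This gives the (unique) bounded extension.

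For the constant $\sqrt2$, avoid the crude triangle-inequality bound $2$ and use orthogonality of the ranges of $P_+$ and $P_-$. For $a\in\ell^2(\Z)$, write $a_\pm=P_\pm a$, so that $\norm{a_+}^2+\norm{a_-}^2=\norm a^2$. By the triangle inequality and unitarity,
$$
\norm{N_\delta a}\le \norm{T_\delta a_+}+\norm{T_{-\delta}a_-}=\norm{a_+}+\norm{a_-}.
$$
Cauchy--Schwarz in $\R^2$ then bounds the right-hand side by $\sqrt2\,(\norm{a_+}^2+\norm{a_-}^2)^{1/2}=\sqrt2\,\norm a$, which is \eqref{eq:uniform-sign}.

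Equivalently, one can expand
$$
\norm{N_\delta a}^2=\norm{a_+}^2+\norm{a_-}^2+2\,\mathrm{Re}\ip{T_{-\delta}a_-}{T_\delta a_+}
$$
and use $T_\delta^*T_{-\delta}=T_{-2\delta}$ with $\norm{T_{-2\delta}}=1$. This version also points toward the later exact constant $1+\rho_{\delta,s}$.

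There is no real obstacle here. The only care point is the extension step: the block identity must first be verified on $\ell^2_0(\Z)$, where the sum in \eqref{eq:Ndelta} is finite, before passing to closure. The bound is uniform in both the sign pattern and $\delta$, because neither enters the estimate beyond the unitarity of $T_{\pm\delta}$.
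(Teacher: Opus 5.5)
Your proposal is correct and follows essentially the same route as the paper: the block decomposition $N_\delta=T_\delta P_+ + T_{-\delta}P_-$, the triangle inequality with unitarity of $T_{\pm\delta}$, and the elementary inequality $\norm{P_+a}+\norm{P_-a}\le\sqrt2\,\norm a$. Your explicit remark about verifying the identity on $\ell^2_0(\Z)$ before extending by density, and the alternative expansion via $T_\delta^*T_{-\delta}=T_{-2\delta}$, are also correct.
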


\begin{proof}
Using Lemma~\ref{lem:block-sign} and the unitarity of $T_{\pm\delta}$,
$$\norm{N_\delta a}=\norm{T_\delta(P_+a)+T_{-\delta}(P_-a)}\leq\norm{P_+a}+\norm{P_-a}\leq\sqrt{2\bigl(\norm{P_+a}^2+\norm{P_-a}^2\bigr)}
=\sqrt2\norm{a}.$$
\end{proof}

\begin{proposition}[Adjoint and square]\label{prop:adjoint-sign}
For an arbitrary sign pattern,
\begin{equation}\label{eq:adjoint-sign}
N_\delta^*=P_+T_{-\delta}+P_-T_\delta,
\end{equation}
and
\begin{equation}\label{eq:square-sign}
N_\delta^*N_\delta
=I+P_+T_{-2\delta}P_-+P_-T_{2\delta}P_+.
\end{equation}
\end{proposition}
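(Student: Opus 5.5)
The plan is to start from the block decomposition of Lemma~\ref{lem:block-sign}, $N_\delta=T_\delta P_+ +T_{-\delta}P_-$. First, $N_\delta$ is bounded by Proposition~\ref{prop:uniform-sign}, so its adjoint is the sum of the adjoints of the two bounded pieces. The coordinate projections $P_\pm$ are orthogonal, hence self-adjoint. Proposition~\ref{prop:group-law} gives $T_t^*=T_{-t}$. Therefore
$$
N_\delta^*=(T_\delta P_+)^*+(T_{-\delta}P_-)^*=P_+T_{-\delta}+P_-T_\delta,
$$
which is \eqref{eq:adjoint-sign}.

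For the square I will multiply out the product and get four terms:
$$
N_\delta^*N_\delta
=P_+T_{-\delta}T_\delta P_+ + P_+T_{-\delta}T_{-\delta}P_- + P_-T_\delta T_\delta P_+ + P_-T_{\delta}T_{-\delta}P_-.
$$
The group law $T_tT_s=T_{t+s}$, together with $T_0=I$, turns this into
$$
N_\delta^*N_\delta=P_+P_+ + P_+T_{-2\delta}P_- + P_-T_{2\delta}P_+ + P_-P_-.
$$
Since $P_\pm^2=P_\pm$ and $P_++P_-=I$ (the sets $S_+$ and $S_-$ partition $\Z$), the diagonal terms add up to $I$. This yields \eqref{eq:square-sign}.

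There is no real obstacle here. The only points needing care are that $T_0=I$, which follows from the multiplier representation of Theorem~\ref{thm:multiplier} since the symbol is $1$ at $t=0$, and that all the operators involved are bounded on all of $\ell^2(\Z)$, so the algebra is legitimate and not merely formal on $\ell^2_0(\Z)$. Both facts are supplied by Theorem~\ref{thm:multiplier} and Proposition~\ref{prop:uniform-sign}.
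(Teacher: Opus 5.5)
Your proposal is correct and matches the paper's proof: you take the adjoint of the block decomposition using $T_t^*=T_{-t}$ and the self-adjointness of $P_\pm$, then expand the four-term product and simplify it with the group law, $T_0=I$, $P_\pm^2=P_\pm$, and $P_++P_-=I$. Your remarks on boundedness and on $T_0=I$ only spell out what the paper's computation uses implicitly.
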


\begin{proof}
From \eqref{eq:block-sign},
$$N_\delta^*=(T_\delta P_+ +T_{-\delta}P_-)^*=P_+T_{-\delta}+P_-T_\delta,$$
which proves \eqref{eq:adjoint-sign}. Hence
\begin{align}
N_\delta^*N_\delta
&=(P_+T_{-\delta}+P_-T_\delta)
(T_\delta P_+ +T_{-\delta}P_-)\\
&=P_+T_{-\delta}T_\delta P_+
+P_+T_{-\delta}T_{-\delta}P_-\notag\\
&\quad+P_-T_\delta T_\delta P_+
+P_-T_\delta T_{-\delta}P_-\\
&=P_++P_-+P_+T_{-2\delta}P_-+P_-T_{2\delta}P_+\\
&=I+P_+T_{-2\delta}P_-+P_-T_{2\delta}P_+,
\label{eq:square-sign-calculation}
\end{align}
as claimed.
\end{proof}

\begin{proposition}[Preliminary quadratic estimate]\label{prop:preliminary-quadratic}
For every $a\in\ell^2(\Z)$,
\begin{equation}\label{eq:preliminary-quadratic}
\bigl(1-2\sin(\pi\delta)\bigr)\norm{a}_2^2
\leq\norm{N_\delta a}_2^2
\leq\bigl(1+2\sin(\pi\delta)\bigr)\norm{a}_2^2.
\end{equation}
Consequently, this estimate gives a positive lower bound whenever
\begin{equation}\label{eq:one-sixth}
2\sin(\pi\delta)<1,
\qquad\text{equivalently}\qquad
0\leq\delta<\frac16.
\end{equation}
\end{proposition}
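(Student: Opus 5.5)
The plan is to read everything off the identity \eqref{eq:square-sign} of Proposition~\ref{prop:adjoint-sign}, viewing the two off-diagonal blocks as a perturbation of the identity. First work with finitely supported $a$; since $N_\delta$ is bounded by Proposition~\ref{prop:uniform-sign}, both sides of \eqref{eq:preliminary-quadratic} are continuous in $a$, and density of $\ell^2_0(\Z)$ extends the bounds to all of $\ell^2(\Z)$. Taking the inner product of \eqref{eq:square-sign} with $a$ gives
$$
\norm{N_\delta a}_2^2=\norm{a}_2^2+2\,\mathrm{Re}\ip{P_-T_{2\delta}P_+a}{P_-a},
$$
because the two cross terms are adjoint to each other by $T_{2\delta}^*=T_{-2\delta}$ from Proposition~\ref{prop:group-law}.

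The key step is to bound this cross term. Since $P_-P_+=0$, we have $P_-T_{2\delta}P_+=P_-(T_{2\delta}-I)P_+$. Lemma~\ref{lem:distance-identity}, applicable because $0\le\delta<\tfrac12$, then gives
$$
\norm{P_-T_{2\delta}P_+}\le\norm{T_{2\delta}-I}=2\sin(\pi\delta).
$$
Cauchy--Schwarz followed by the AM--GM inequality yields
$$
2\bigl|\ip{P_-T_{2\delta}P_+a}{P_-a}\bigr|\le 4\sin(\pi\delta)\norm{P_+a}\norm{P_-a}\le 2\sin(\pi\delta)\bigl(\norm{P_+a}^2+\norm{P_-a}^2\bigr)=2\sin(\pi\delta)\norm{a}^2.
$$

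This bound alone only gives $1\pm 4\sin(\pi\delta)\cdot\tfrac12$, i.e.\ $1\pm2\sin(\pi\delta)$, which is exactly the constant stated in \eqref{eq:preliminary-quadratic}. So the one point to watch is the factor-of-two bookkeeping between the cross-term coefficient $2$ and the AM--GM factor $\tfrac12$. I do not expect a genuine obstacle: inserting $-I$ is the only idea needed, and the rest is elementary.

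The consequence \eqref{eq:one-sixth} follows from monotonicity of $\sin$ on $[0,\pi/2)$. Indeed, $2\sin(\pi\delta)<1$ is equivalent to $\pi\delta<\pi/6$, that is, $\delta<\tfrac16$.
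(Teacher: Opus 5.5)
Your proof is correct and takes essentially the same route as the paper. The paper expands $\norm{T_\delta a^+ + T_{-\delta}a^-}_2^2$ directly, with $a^\pm=P_\pm a$, instead of pairing the identity \eqref{eq:square-sign} with $a$. Both routes reach the cross term $2\operatorname{Re}\ip{T_{2\delta}a^+}{a^-}$, subtract $I$ using $a^+\perp a^-$, and finish with Cauchy--Schwarz, Lemma~\ref{lem:distance-identity}, and $2xy\le x^2+y^2$. Your density step is unnecessary, since \eqref{eq:square-sign} already holds on all of $\ell^2(\Z)$.
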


\begin{proof}
Put
$$
a^+=P_+a,
\qquad
a^-=P_-a.
$$
The supports of $a^+$ and $a^-$ are disjoint, so
\begin{equation}\label{eq:orthogonal-components}
\ip{a^+}{a^-}=0,
\qquad
\norm{a}_2^2=\norm{a^+}_2^2+\norm{a^-}_2^2.
\end{equation}
By \eqref{eq:block-sign},
\begin{equation}\label{eq:Ndelta-components}
N_\delta a=T_\delta a^+ +T_{-\delta}a^-.
\end{equation}
Therefore
\begin{align}
\norm{N_\delta a}_2^2
&=\ip{T_\delta a^++T_{-\delta}a^-}
{T_\delta a^++T_{-\delta}a^-}\\
&=\norm{T_\delta a^+}_2^2
+\norm{T_{-\delta}a^-}_2^2
+2\operatorname{Re}\ip{T_\delta a^+}{T_{-\delta}a^-}.
\label{eq:Ndelta-square}
\end{align}
Since both shifted operators are unitary,
\begin{equation}\label{eq:unitary-diagonal}
\norm{T_\delta a^+}_2^2+\norm{T_{-\delta}a^-}_2^2
=\norm{a^+}_2^2+\norm{a^-}_2^2
=\norm{a}_2^2.
\end{equation}
Using $T_{-\delta}^*=T_\delta$,
$$\ip{T_\delta a^+}{T_{-\delta}a^-}=\ip{T_{-\delta}^*T_\delta a^+}{a^-}=\ip{T_\delta T_\delta a^+}{a^-}=\ip{T_{2\delta}a^+}{a^-}.$$
Since $a^+\perp a^-$,
$$\ip{T_{2\delta}a^+}{a^-}=\ip{(T_{2\delta}-I)a^+}{a^-}+\ip{a^+}{a^-}=\ip{(T_{2\delta}-I)a^+}{a^-}.$$
Thus, by Cauchy--Schwarz and \eqref{eq:distance-2delta},
\begin{align}
\left|2\operatorname{Re}\ip{T_{2\delta}a^+}{a^-}\right|
&\leq2\left|\ip{(T_{2\delta}-I)a^+}{a^-}\right|\leq2\norm{(T_{2\delta}-I)a^+}_2\norm{a^-}_2\\
&\leq2\norm{T_{2\delta}-I}\norm{a^+}_2\norm{a^-}_2=4\sin(\pi\delta)\norm{a^+}_2\norm{a^-}_2.
\label{eq:cross-preliminary}
\end{align}
The elementary inequality $2xy\leq x^2+y^2$, applied to $x=\norm{a^+}_2$ and $y=\norm{a^-}_2$, gives
\begin{equation}\label{eq:two-xy}
2\norm{a^+}_2\norm{a^-}_2
\leq\norm{a^+}_2^2+\norm{a^-}_2^2
=\norm{a}_2^2.
\end{equation}
Consequently,
\begin{equation}\label{eq:cross-preliminary-final}
\left|2\operatorname{Re}\ip{T_{2\delta}a^+}{a^-}\right|
\leq2\sin(\pi\delta)\norm{a}_2^2.
\end{equation}
Combining \eqref{eq:Ndelta-square}, \eqref{eq:unitary-diagonal}, and \eqref{eq:cross-preliminary-final} gives \eqref{eq:preliminary-quadratic}.
\end{proof}

\begin{theorem}[Refined quadratic estimate]\label{thm:refined-quadratic}
For every sign pattern, every $a\in\ell^2(\Z)$, and every $0\leq\delta\leq\tfrac14$,
\begin{equation}\label{eq:refined-quadratic}
\bigl(1-\sin(2\pi\delta)\bigr)\norm{a}_2^2
\leq\norm{N_\delta a}_2^2
\leq\bigl(1+\sin(2\pi\delta)\bigr)\norm{a}_2^2.
\end{equation}
In particular, $N_\delta$ is bounded below for every
\begin{equation}\label{eq:quarter-range}
0\leq\delta<\frac14.
\end{equation}
\end{theorem}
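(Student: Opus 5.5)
The plan is to sharpen the cross-term estimate in the proof of Proposition~\ref{prop:preliminary-quadratic}. That proof used $T_{2\delta}-I$; here I will subtract an optimally chosen scalar multiple of the identity instead. As there, write $a^\pm=P_\pm a$, so that $\norm{N_\delta a}_2^2=\norm{a}_2^2+2\operatorname{Re}\ip{T_{2\delta}a^+}{a^-}$ by \eqref{eq:Ndelta-square} and \eqref{eq:unitary-diagonal}. Since $a^+\perp a^-$, for every constant $c\in\C$ we have
$$
\ip{T_{2\delta}a^+}{a^-}=\ip{(T_{2\delta}-cI)a^+}{a^-}.
$$
Hence $\bigl|\ip{T_{2\delta}a^+}{a^-}\bigr|\le \norm{T_{2\delta}-cI}\,\norm{a^+}_2\norm{a^-}_2$ for every $c$. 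Combined with \eqref{eq:two-xy}, this gives
$$
\bigl|2\operatorname{Re}\ip{T_{2\delta}a^+}{a^-}\bigr|\le \Bigl(\inf_{c\in\C}\norm{T_{2\delta}-cI}\Bigr)\norm{a}_2^2 .
$$
It therefore suffices to show that $\inf_c\norm{T_{2\delta}-cI}\le\sin(2\pi\delta)$ for $0\le\delta\le\tfrac14$.

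By Theorem~\ref{thm:multiplier}, $\norm{T_{2\delta}-cI}=\operatorname{ess\,sup}_{|\xi|\le1/2}|\e^{-2\pi i 2\delta\xi}-c|$. As $\xi$ ranges over $[-\tfrac12,\tfrac12]$, the phase $\theta=-4\pi\delta\xi$ ranges over $[-\alpha,\alpha]$ with $\alpha=2\pi\delta\in[0,\tfrac{\pi}{2}]$. I will take $c=\cos\alpha$, the midpoint of the chord joining the endpoints of this arc. Then
$$
|\e^{i\theta}-\cos\alpha|^2=\sin^2\theta+(\cos\theta-\cos\alpha)^2=1-2\cos\theta\cos\alpha+\cos^2\alpha .
$$
Because $\cos\alpha\ge0$ and $\cos\theta\ge\cos\alpha$ for $|\theta|\le\alpha$, this expression is at most $1-\cos^2\alpha=\sin^2\alpha$, with equality at $\theta=\pm\alpha$. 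This gives $\norm{T_{2\delta}-\cos(2\pi\delta)I}=\sin(2\pi\delta)$, and the two-sided estimate \eqref{eq:refined-quadratic} follows at once. For $\delta<\tfrac14$ we have $\sin(2\pi\delta)<1$, so $N_\delta$ is bounded below.

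The only real step is the choice of the centering constant $c$, that is, finding the smallest disk containing the arc. The restriction $\delta\le\tfrac14$ enters exactly through the sign condition $\cos\alpha\ge0$. For larger $\delta$ the arc exceeds a semicircle, the optimal disk becomes the unit disk, and the bound degenerates. Everything else is the bookkeeping already carried out in Proposition~\ref{prop:preliminary-quadratic}.
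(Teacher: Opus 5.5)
Your proposal is correct and follows the paper's proof step for step. You subtract the scalar $\cos(2\pi\delta)$ using $a^+\perp a^-$, compute $\norm{T_{2\delta}-\cos(2\pi\delta)I}=\sin(2\pi\delta)$ from the multiplier via $|\e^{i\theta}-\cos\alpha|^2=1+\cos^2\alpha-2\cos\alpha\cos\theta\le\sin^2\alpha$ (which uses $\cos\alpha\ge0$), and finish with $2xy\le x^2+y^2$; your extra framing as an infimum over $c$ and as the smallest disk containing the arc is accurate and leaves the argument unchanged.
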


\begin{proof}
Orthogonality permits the subtraction of any scalar multiple of the identity. More precisely, for every $c\in\C$,
\begin{equation}
\ip{T_{2\delta}a^+}{a^-}=\ip{(T_{2\delta}-cI)a^+}{a^-}+c\ip{a^+}{a^-}=\ip{(T_{2\delta}-cI)a^+}{a^-}.
\label{eq:subtract-scalar}    
\end{equation}
Assume
\begin{equation}\label{eq:delta-quarter-assumption}
0\leq\delta\leq\frac14,
\end{equation}
and set $\varphi=2\pi\delta$, $c=\cos\varphi$. By \eqref{eq:multiplier}, the multiplier of $T_{2\delta}$ is $\e^{-4\pi i\delta\xi}$. Put $\theta=4\pi\delta\xi$. Since $|\xi|\leq1/2$, $|\theta|\leq2\pi\delta=\varphi$.
Moreover, \eqref{eq:delta-quarter-assumption} implies $0\leq\varphi\leq\pi/2$, so $\cos\varphi\geq0$. We compute
$$\norm{T_{2\delta}-\cos\varphi\,I}=\sup_{|\theta|\leq\varphi}|\e^{-i\theta}-\cos\varphi|.$$
The square of the quantity inside the supremum is
\begin{align}
|\e^{-i\theta}-\cos\varphi|^2
&=(\e^{-i\theta}-\cos\varphi)(\e^{i\theta}-\cos\varphi)=1+\cos^2\varphi-\cos\varphi(\e^{i\theta}+\e^{-i\theta})\\
&=1+\cos^2\varphi-2\cos\varphi\cos\theta.
\label{eq:arc-distance-square}
\end{align}
For $|\theta|\leq\varphi\leq\pi/2$, the cosine is even and decreasing on $[0,\pi]$, hence
\begin{equation}\label{eq:cos-order}
\cos\theta\geq\cos\varphi.
\end{equation}
Because $\cos\varphi\geq0$, equations \eqref{eq:arc-distance-square}--\eqref{eq:cos-order} yield
$$|\e^{-i\theta}-\cos\varphi|^2\leq1+\cos^2\varphi-2\cos^2\varphi=1-\cos^2\varphi=\sin^2\varphi.$$
Equality occurs at $\theta=\pm\varphi$, so
\begin{equation}\label{eq:best-scalar-norm}
\norm{T_{2\delta}-\cos(2\pi\delta)I}
=\sin(2\pi\delta),
\qquad 0\leq\delta\leq\frac14.
\end{equation}
Using \eqref{eq:subtract-scalar} with this choice of $c$,
\begin{align}
\left|2\operatorname{Re}\ip{T_{2\delta}a^+}{a^-}\right|
&\leq2\left|\ip{(T_{2\delta}-\cos(2\pi\delta)I)a^+}{a^-}\right|\leq2\sin(2\pi\delta)\norm{a^+}_2\norm{a^-}_2\\
&\leq\sin(2\pi\delta)
\bigl(\norm{a^+}_2^2+\norm{a^-}_2^2\bigr)=\sin(2\pi\delta)\norm{a}_2^2.
\label{eq:refined-cross}
\end{align}
Substitution into \eqref{eq:Ndelta-square} gives \eqref{eq:refined-quadratic}.

For $0<\delta<1/4$,
\begin{equation}\label{eq:strict-improvement}
\sin(2\pi\delta)=2\sin(\pi\delta)\cos(\pi\delta)
<2\sin(\pi\delta),
\end{equation}
so \eqref{eq:refined-quadratic} is strictly stronger than \eqref{eq:preliminary-quadratic} in that range. The lower coefficient is positive precisely for \eqref{eq:quarter-range}.
\end{proof}

\begin{remark}
At $\delta=1/4$, the lower coefficient in \eqref{eq:refined-quadratic} is zero and the upper coefficient is $2$, consistently with the uniform estimate $\norm{N_\delta}\leq\sqrt2$. The theorem concerns the auxiliary kernel \eqref{eq:aux-sign-kernel}; it does not by itself prove the corresponding statement for the genuine Gram kernel \eqref{eq:true-sign-gram}.
\end{remark}

\begin{proposition}[Vectorwise two-sided estimate]\label{prop:vectorwise-sign}
For every $a\in\ell^2(\Z)$,
\begin{equation}\label{eq:vectorwise-sign}
\left|\norm{P_+a}-\norm{P_-a}\right|
\leq\norm{N_\delta a}
\leq\norm{P_+a}+\norm{P_-a}.
\end{equation}
\end{proposition}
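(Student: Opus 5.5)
The proof will come straight from the block decomposition of Lemma~\ref{lem:block-sign}, the unitarity of $T_{\pm\delta}$ from Proposition~\ref{prop:group-law}, and the triangle inequality in both directions. We write $a^+=P_+a$ and $a^-=P_-a$. Lemma~\ref{lem:block-sign} gives
\[
N_\delta a=T_\delta a^+ + T_{-\delta}a^-,
\]
which is first established for finitely supported $a$ and then extended to all $a\in\ell^2(\Z)$ by boundedness (Proposition~\ref{prop:uniform-sign}). Because $T_\delta$ and $T_{-\delta}$ are unitary,
\[
\norm{T_\delta a^+}=\norm{a^+}
\qquad\text{and}\qquad
\norm{T_{-\delta}a^-}=\norm{a^-}.
\]

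For the upper bound we apply the ordinary triangle inequality:
\[
\norm{N_\delta a}\le\norm{T_\delta a^+}+\norm{T_{-\delta}a^-}=\norm{P_+a}+\norm{P_-a}.
\]
For the lower bound we use the reverse triangle inequality $\norm{x+y}\ge\bigl|\norm{x}-\norm{y}\bigr|$ with $x=T_\delta a^+$ and $y=T_{-\delta}a^-$. This gives
\[
\norm{N_\delta a}\ge\bigl|\norm{T_\delta a^+}-\norm{T_{-\delta}a^-}\bigr|=\bigl|\norm{P_+a}-\norm{P_-a}\bigr|.
\]

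No step is a genuine obstacle. The only point needing care is that the identity $N_\delta=T_\delta P_++T_{-\delta}P_-$ holds on all of $\ell^2(\Z)$. This follows from density of $\ell^2_0(\Z)$, because both sides are bounded operators that agree on finitely supported sequences.

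As an optional remark, one can observe that the lower bound in \eqref{eq:vectorwise-sign} is informative only when $\norm{P_+a}\neq\norm{P_-a}$. This explains why it complements, rather than replaces, the quadratic estimates of Proposition~\ref{prop:preliminary-quadratic} and Theorem~\ref{thm:refined-quadratic}.
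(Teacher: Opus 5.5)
Your proof is correct and follows the paper's route: the triangle inequality applied to $N_\delta a=T_\delta P_+a+T_{-\delta}P_-a$ gives the upper bound, and the reverse triangle inequality with the unitarity of $T_{\pm\delta}$ gives the lower bound. Your remark that the block identity extends from finitely supported sequences to all of $\ell^2(\Z)$ by density and boundedness is a welcome clarification that the paper leaves implicit.
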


\begin{proof}
The upper bound is the triangle inequality applied to \eqref{eq:block-sign}. The lower bound follows from the reverse triangle inequality and the unitarity of $T_{\pm\delta}$.
\end{proof}

\section{Exact pattern-dependent bounds for the binary shifted-sinc model}
\label{sec:exact-pattern}

We now pass from pattern-uniform estimates to the exact geometry of a fixed sign pattern. Retain the notation of Section~\ref{sec:sign-model}:
$$
N_\delta=T_\delta P_+ +T_{-\delta}P_-,
$$
where $P_+$ and $P_-$ are the coordinate projections associated with
$$
S_+=\{n:s_n=1\},
\qquad
S_-=\{n:s_n=-1\}.
$$
Proposition~\ref{prop:adjoint-sign} gives
$$
N_\delta^*N_\delta
=
I+P_+T_{-2\delta}P_-+P_-T_{2\delta}P_+.
$$
The only nontrivial interaction is therefore the compression of the unitary shift $T_{2\delta}$ between the two sign subspaces.

Define
\begin{equation}\label{eq:v3-C}
C_{\delta,s}
=
P_-T_{2\delta}P_+:
P_+\ell^2(\Z)\longrightarrow P_-\ell^2(\Z),
\qquad
\rho_{\delta,s}=\norm{C_{\delta,s}}.
\end{equation}

\begin{theorem}[Exact block spectrum]\label{thm:v3-block-spectrum}
Relative to the orthogonal decomposition
$$
\ell^2(\Z)=P_+\ell^2(\Z)\oplus P_-\ell^2(\Z),
$$
one has
\begin{equation}\label{eq:v3-block-gram}
N_\delta^*N_\delta
=
\begin{pmatrix}
I&C_{\delta,s}^*\\
C_{\delta,s}&I
\end{pmatrix}.
\end{equation}
Moreover,
\begin{equation}\label{eq:v3-spectrum-endpoints}
\inf\sigma(N_\delta^*N_\delta)=1-\rho_{\delta,s},
\qquad
\sup\sigma(N_\delta^*N_\delta)=1+\rho_{\delta,s}.
\end{equation}
Consequently,
\begin{equation}\label{eq:v3-exact-pattern-bounds}
(1-\rho_{\delta,s})\norm{a}_2^2
\le
\norm{N_\delta a}_2^2
\le
(1+\rho_{\delta,s})\norm{a}_2^2,
\qquad a\in\ell^2(\Z),
\end{equation}
and both constants are optimal.
\end{theorem}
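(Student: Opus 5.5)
The block form \eqref{eq:v3-block-gram} comes straight from Proposition~\ref{prop:adjoint-sign}. Write $a=a^++a^-$ with $a^\pm=P_\pm a$. The identity term acts as $I$ on each summand. The term $P_-T_{2\delta}P_+$ maps the first summand into the second, so it is the $(2,1)$ entry $C_{\delta,s}$. The term $P_+T_{-2\delta}P_-$ is the $(1,2)$ entry, and since $T_{2\delta}^*=T_{-2\delta}$ by Proposition~\ref{prop:group-law}, it equals $(P_-T_{2\delta}P_+)^*=C_{\delta,s}^*$. Thus $N_\delta^*N_\delta=I+X$, where
$$
X=\begin{pmatrix}0&C^*\\ C&0\end{pmatrix}, \qquad C=C_{\delta,s},
$$
is bounded and self-adjoint. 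It therefore suffices to show that $\sup\sigma(X)=\rho$ and $\inf\sigma(X)=-\rho$, where $\rho=\rho_{\delta,s}$.

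For the upper bounds, note that
$$
\ip{Xa}{a}=2\operatorname{Re}\ip{Ca^+}{a^-}\le 2\rho\norm{a^+}\norm{a^-}\le\rho\norm{a}^2,
$$
and symmetrically $\ip{Xa}{a}\ge-\rho\norm a^2$. Hence the numerical range of $X$ lies in $[-\rho,\rho]$, and so does $\sigma(X)$. Alternatively, $X^2=\operatorname{diag}(C^*C,CC^*)$ has norm $\rho^2$.

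For sharpness, the key point is the symmetry $U=\operatorname{diag}(I,-I)$. It is unitary and satisfies $UXU^*=-X$, so $\sigma(X)=-\sigma(X)$. Since $X$ is self-adjoint, $\norm X=\max(|\inf\sigma|,|\sup\sigma|)$. We also have $\norm X^2=\norm{X^2}=\max(\norm{C^*C},\norm{CC^*})=\rho^2$. So $\norm X=\rho$, and some endpoint of $\sigma(X)$ equals $\pm\rho$; by the symmetry, both $\rho$ and $-\rho$ lie in $\sigma(X)$.

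To make this concrete without invoking spectral symmetry abstractly, pick unit vectors $u_k\in P_+\ell^2$ with $\norm{Cu_k}\to\rho$ (assume $\rho>0$; the case $\rho=0$ is trivial since then $X=0$). Set $v_k=Cu_k/\norm{Cu_k}$. Then $\ip{X(u_k\pm v_k)}{u_k\pm v_k}=\pm2\norm{Cu_k}\to\pm2\rho$, while $\norm{u_k\pm v_k}^2=2$ by orthogonality. This gives Rayleigh quotients tending to $\pm\rho$, which yields the endpoints directly.

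Finally, \eqref{eq:v3-exact-pattern-bounds} follows from $\norm{N_\delta a}^2=\ip{N_\delta^*N_\delta a}{a}$, and optimality is exactly the statement that the infimum and supremum of the Rayleigh quotient of the self-adjoint operator $N_\delta^*N_\delta$ are the endpoints of its spectrum.

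The only step needing care is the sharpness: showing that both endpoints $\pm\rho$ are attained rather than merely bounded. This is exactly what the test vectors $u_k\pm v_k$ handle.
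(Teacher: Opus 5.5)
Your proposal is correct and follows essentially the same route as the paper. You use the block identity from Proposition~\ref{prop:adjoint-sign}, the unitary involution $\operatorname{diag}(I,-I)$ for spectral symmetry, and $\|X\|^2=\|X^2\|=\max\{\|C^*C\|,\|CC^*\|\}=\rho_{\delta,s}^2$ to force both endpoints $\pm\rho_{\delta,s}$ into the spectrum, and your test vectors $u_k\pm v_k$ are the approximate singular vectors the paper records in Remark~\ref{rem:v3-approx-singular}.
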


\begin{proof}
The block identity follows directly from Proposition~\ref{prop:adjoint-sign}. Put
$$
K_{\delta,s}
=
\begin{pmatrix}
0&C_{\delta,s}^*\\
C_{\delta,s}&0
\end{pmatrix},
\qquad
N_\delta^*N_\delta=I+K_{\delta,s}.
$$
The operator $K_{\delta,s}$ is self-adjoint. Introduce the unitary involution
$$
J
=
\begin{pmatrix}
I&0\\
0&-I
\end{pmatrix}.
$$
Then
$$
JK_{\delta,s}J=-K_{\delta,s}.
$$
Unitary conjugation preserves the spectrum, hence
$$
\sigma(K_{\delta,s})
=
\sigma(-K_{\delta,s})
=
-\sigma(K_{\delta,s}).
$$
Thus the spectrum is symmetric about the origin.

Furthermore,
$$
K_{\delta,s}^2
=
\begin{pmatrix}
C_{\delta,s}^*C_{\delta,s}&0\\
0&C_{\delta,s}C_{\delta,s}^*
\end{pmatrix},
$$
so
$$
\norm{K_{\delta,s}}^2
=
\max\{\norm{C_{\delta,s}^*C_{\delta,s}},\norm{C_{\delta,s}C_{\delta,s}^*}\}
=
\norm{C_{\delta,s}}^2.
$$
Hence
$$
\norm{K_{\delta,s}}=\rho_{\delta,s}.
$$
Because a bounded self-adjoint operator has compact real spectrum and its norm is the maximum of the absolute values of the spectral points, at least one of $\pm\rho_{\delta,s}$ belongs to the spectrum. Spectral symmetry then forces both endpoints to belong to the spectrum. Therefore
$$
\inf\sigma(K_{\delta,s})=-\rho_{\delta,s},
\qquad
\sup\sigma(K_{\delta,s})=\rho_{\delta,s},
$$
and shifting by the identity yields \eqref{eq:v3-spectrum-endpoints}. The quadratic inequalities and their optimality follow from the spectral theorem.
\end{proof}

\begin{remark}[Approximate singular vectors and optimality]\label{rem:v3-approx-singular}
The proof above does not require $C_{\delta,s}$ to attain its norm. Nevertheless, optimality can also be seen concretely by approximate singular vectors. Choose unit vectors $x_k\in P_+\ell^2(\Z)$ such that
$$
\norm{C_{\delta,s}x_k}\longrightarrow\rho_{\delta,s}.
$$
Whenever $C_{\delta,s}x_k\neq0$, set
$$
y_k=\frac{C_{\delta,s}x_k}{\norm{C_{\delta,s}x_k}},
\qquad
z_k^\pm=\frac1{\sqrt2}(x_k,\pm y_k).
$$
Then $\norm{z_k^\pm}=1$ and
$$
\ip{K_{\delta,s}z_k^+}{z_k^+}\longrightarrow\rho_{\delta,s},
\qquad
\ip{K_{\delta,s}z_k^-}{z_k^-}\longrightarrow-\rho_{\delta,s}.
$$
Thus the Rayleigh quotients of $N_\delta^*N_\delta$ approach $1\pm\rho_{\delta,s}$ even when no norm-attaining singular vector exists.
\end{remark}

\begin{corollary}[Pattern-dependent Riesz criterion]\label{cor:v3-pattern-criterion}
The exponential family
$$
\left\{\e^{2\pi i(n+s_n\delta)x}\right\}_{n\in\Z}
$$
is a Riesz sequence in $L^2(-1/2,1/2)$ if and only if
\begin{equation}\label{eq:v3-rho-condition}
\rho_{\delta,s}<1.
\end{equation}
When \eqref{eq:v3-rho-condition} holds, the condition number of the synthesis operator onto its closed range is
\begin{equation}\label{eq:v3-condition-number}
\kappa
=
\sqrt{\frac{1+\rho_{\delta,s}}{1-\rho_{\delta,s}}}.
\end{equation}
The condition $\rho_{\delta,s}<1$ alone does not imply completeness. If the synthesis operator is also onto, then the family is a Riesz basis.
\end{corollary}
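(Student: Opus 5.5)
The plan is to identify the synthesis operator of the family with $N_\delta$ and then read everything off Theorem~\ref{thm:v3-block-spectrum}. For $a\in\ell^2_0(\Z)$, write $f_a=\sum_n a_n\e^{2\pi i(n+s_n\delta)x}$. Lemma~\ref{lem:innerprod} with $d=1$, applied term by term with shifts $s_n\delta$ and $s_m\delta$, expresses $\norm{f_a}^2_{L^2(-1/2,1/2)}$ through the genuine Gram kernel. That kernel is not $N_\delta^*N_\delta$ (Remark~\ref{rem:auxiliary-distinction}), so a direct link is needed. I will use the Fourier transform instead: restricted to $(-1/2,1/2)$, the exponential $\e^{2\pi i(n+t)x}$ has Fourier coefficients $\sinc_\pi(n-m+t)$ against $\e^{2\pi i mx}$. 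Hence, with $U:L^2(-1/2,1/2)\to\ell^2(\Z)$ the unitary coefficient map,
\begin{equation*}
U f_a=N_\delta a, \qquad\text{so}\qquad \norm{f_a}_{L^2}=\norm{N_\delta a}_{\ell^2}.
\end{equation*}
Thus the synthesis operator $\Phi a=f_a$ equals $U^{-1}N_\delta$, and $\Phi^*\Phi=N_\delta^*N_\delta$.

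Given this identity, the Riesz sequence inequality \eqref{eq:riesz-seq} holds with some $A>0$ exactly when $N_\delta$ is bounded below. By \eqref{eq:v3-exact-pattern-bounds} and the optimality of its constants, the best lower constant is $1-\rho_{\delta,s}$. This constant is positive if and only if $\rho_{\delta,s}<1$. Note that $\rho_{\delta,s}\le\norm{T_{2\delta}}=1$, so the only failing case is $\rho_{\delta,s}=1$, and there optimality rules out any positive $A$. The upper bound is always available from Proposition~\ref{prop:uniform-sign}.

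For the condition number, when $\rho_{\delta,s}<1$ the operator $\Phi$ maps $\ell^2$ isomorphically onto its closed range. Its largest and smallest singular values are the square roots of $\sup$ and $\inf$ of $\sigma(N_\delta^*N_\delta)$, which \eqref{eq:v3-spectrum-endpoints} gives as $1\pm\rho_{\delta,s}$. Their ratio yields \eqref{eq:v3-condition-number}.

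For the final remarks, a Riesz sequence is a Riesz basis exactly when its closed span is all of $L^2$, that is, when $\Phi$ is onto. Equivalently, $N_\delta$ is onto, and this is not encoded in $\rho_{\delta,s}$. To show that $\rho<1$ does not force completeness, one observation suffices: $\rho_{\delta,s}$ can be strictly below $1$ while $N_\delta$ fails to be surjective. The clean way is to note that lower boundedness concerns only $N_\delta^*N_\delta$, whereas surjectivity concerns $N_\delta N_\delta^*$. I would point to later examples or give a short argument instead of computing one here. The main obstacle is the first step, namely confirming carefully that the Fourier coefficient identification gives $N_\delta$ with exactly the paper's index convention, $\sinc_\pi(n-m+s_n\delta)$. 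This needs the $\e^{-2\pi i mx}$ pairing and evenness of $\sinc_\pi$.
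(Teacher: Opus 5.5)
Your proposal is correct and is essentially the paper's own argument: the paper also identifies synthesis in Fourier coordinates with $N_\delta$, reads the criterion and $\kappa$ off the endpoints $1\pm\rho_{\delta,s}$ of Theorem~\ref{thm:v3-block-spectrum}, and treats the completeness sentence only as the remark that surjectivity is a separate requirement. Two small corrections are needed. First, your identity $Uf_a=N_\delta a$ shows that $N_\delta^*N_\delta$ \emph{is} the genuine Gram operator of this family, with kernel $\sinc_\pi(n-m+\delta(s_n-s_m))$; Remark~\ref{rem:auxiliary-distinction} contrasts that kernel with the kernel of $N_\delta$, not of $N_\delta^*N_\delta$. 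Second, the paper contains no incomplete example with $\rho_{\delta,s}<1$ for you to point to: periodic patterns are Riesz bases by Corollary~\ref{cor:v3-binary-vandermonde}, and the step pattern of Lemma~\ref{lem:step-hilbert} has $\rho=1$. If you want such an example you must supply it, e.g.\ the reversed step pattern $s_n=-1$ $(n\le0)$, $s_n=+1$ $(n\ge1)$ with $1/4<\delta<1/2$. There Hilbert's inequality gives $\rho_{\delta,s}\le\sin(2\pi\delta)<1$, while $\prod_{k\ge0}\bigl(1-(z-\tfrac12)^2/(k+\tfrac12+\delta)^2\bigr)$ vanishes on all the frequencies and is $O(|x|^{-2\delta})$ on $\R$. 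It is therefore the Fourier transform of a nonzero $g\in L^2(-1/2,1/2)$ orthogonal to every exponential in the family.
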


\begin{proof}
The Fourier-coordinate representation of synthesis is $N_\delta$. By Theorem~\ref{thm:v3-block-spectrum}, a positive lower synthesis bound is equivalent to $\rho_{\delta,s}<1$. A bounded-below operator is injective with closed range, which is precisely the synthesis characterization of a Riesz sequence. Surjectivity is the additional condition needed for a Riesz basis. The condition-number formula follows from the optimal singular values $\sqrt{1-\rho_{\delta,s}}$ and $\sqrt{1+\rho_{\delta,s}}$.
\end{proof}

The pattern-uniform bound of Theorem~\ref{thm:refined-quadratic} is recovered from the exact invariant. Since $P_-P_+=0$,
$$
C_{\delta,s}=P_-(T_{2\delta}-cI)P_+
$$
for every scalar $c$. With $c=\cos(2\pi\delta)$ and the scalar-approximation identity used in the proof of Theorem~\ref{thm:refined-quadratic},
\begin{equation}\label{eq:v3-rho-uniform}
\rho_{\delta,s}\le\sin(2\pi\delta),
\qquad 0\le\delta\le\frac14.
\end{equation}
Thus $\sin(2\pi\delta)$ is a pattern-uniform upper estimate, whereas $\rho_{\delta,s}$ is the exact invariant for the fixed partition $S_+\cup S_-$. 

\section{Periodic binary patterns and constant finite matrices}
\label{sec:periodic-binary}

Suppose now that $s_{n+p}=s_n$ for a period $p\ge1$. The periodic structure permits a complete finite-dimensional reduction. The key point is stronger than the existence of a matrix-valued symbol: the fiber variable can be factored entirely into a unitary input matrix.

Let
$$
J_p=\left[-\frac12,-\frac12+\frac1p\right),
\qquad
v_r=\bigl(\e^{-2\pi ijr/p}\bigr)_{j=0}^{p-1},
\qquad
Q_r=\frac1p v_rv_r^*,
$$
for $0\le r\le p-1$. The vectors $p^{-1/2}v_r$ form an orthonormal basis of $\C^p$. Put
\begin{equation}\label{eq:v3-V}
V=\frac1{\sqrt p}(v_0\ \cdots\ v_{p-1}),
\end{equation}
so that $V$ is unitary. For $t\in\R$ and $\xi\in J_p$, define
\begin{equation}\label{eq:v3-D}
D_t(\xi)
=
\operatorname{diag}\left(\e^{-2\pi it(\xi+j/p)}\right)_{j=0}^{p-1}.
\end{equation}
The standard period-$p$ fiberization sends the residue projection modulo $p$ to $Q_r$ and $T_t$ to multiplication by $D_t(\xi)$. Hence the binary synthesis symbol is
\begin{equation}\label{eq:v3-periodic-symbol}
\mathcal N_{p,\delta,s}(\xi)
=
\sum_{r=0}^{p-1}D_{s_r\delta}(\xi)Q_r.
\end{equation}

Define the constant matrix
\begin{equation}\label{eq:v3-Hbinary}
H_{p,\delta,s}
=
\left(
\frac1{\sqrt p}
\e^{-2\pi ij(r+s_r\delta)/p}
\right)_{j,r=0}^{p-1}
\end{equation}
and the diagonal unitary matrix
\begin{equation}\label{eq:v3-Ebinary}
E_{\delta,s}(\xi)
=
\operatorname{diag}\left(\e^{-2\pi i s_r\delta\xi}\right)_{r=0}^{p-1}.
\end{equation}

\begin{theorem}[Constant-matrix reduction for periodic binary patterns]\label{thm:v3-constant-binary}
For every $\xi\in J_p$,
\begin{equation}\label{eq:v3-constant-factorization}
\mathcal N_{p,\delta,s}(\xi)
=
H_{p,\delta,s}E_{\delta,s}(\xi)V^*.
\end{equation}
Consequently, all singular values of $\mathcal N_{p,\delta,s}(\xi)$ are independent of $\xi$. The optimal squared synthesis bounds are
\begin{equation}\label{eq:v3-periodic-bounds}
A_{p,\delta,s}
=
\lambda_{\min}(H_{p,\delta,s}^*H_{p,\delta,s}),
\qquad
B_{p,\delta,s}
=
\lambda_{\max}(H_{p,\delta,s}^*H_{p,\delta,s}).
\end{equation}
The associated exponential family is a Riesz basis if and only if $H_{p,\delta,s}$ is invertible.
\end{theorem}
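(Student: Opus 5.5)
Since $\mathcal N_{p,\delta,s}(\xi)$ is given explicitly by \eqref{eq:v3-periodic-symbol}, the plan is to check the factorization \eqref{eq:v3-constant-factorization} entrywise. For $x\in\C^p$ with $x=Vy$, we have $Q_rx=p^{-1/2}y_r v_r$, since $Q_r$ is the orthogonal projection onto $p^{-1/2}v_r$ and $V^*x=y$. Hence
$$
\mathcal N_{p,\delta,s}(\xi)x=\sum_{r=0}^{p-1}D_{s_r\delta}(\xi)\frac{y_r}{\sqrt p}v_r .
$$
The $j$-th coordinate is
$$
\sum_r \frac{y_r}{\sqrt p}\,\e^{-2\pi i s_r\delta(\xi+j/p)}\e^{-2\pi ijr/p}
=\sum_r \frac1{\sqrt p}\e^{-2\pi ij(r+s_r\delta)/p}\,\e^{-2\pi i s_r\delta\xi}\,y_r .
$$
This is exactly $\bigl(H_{p,\delta,s}E_{\delta,s}(\xi)y\bigr)_j$. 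Substituting $y=V^*x$ gives \eqref{eq:v3-constant-factorization}. The key observation is that $\e^{-2\pi i t(\xi+j/p)}$ splits into a $\xi$-part depending only on $r$ (through $t=s_r\delta$) and a $j$-part. This splitting is possible precisely because each $D_{s_r\delta}$ acts on the single rank-one range of $Q_r$.

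Since $E_{\delta,s}(\xi)V^*$ is unitary, we get
$$
\mathcal N^*\mathcal N=V E^*(H^*H)E V^*,
$$
which is unitarily equivalent to $H^*H$. So the singular values do not depend on $\xi$.

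Next I would invoke the standard fiberization fact stated before the theorem. The period-$p$ Zak/fiber transform is a unitary map from $\ell^2(\Z)$ onto $L^2(J_p;\C^p)$, and it intertwines $N_\delta$ with multiplication by $\mathcal N_{p,\delta,s}(\xi)$. By the paper's convention this is the transform $\Fcal$ restricted to cosets $\xi+j/p$, suitably normalized, with residue classes modulo $p$ going to the projections $Q_r$. The optimal bounds of a multiplication operator by a bounded matrix function are the essential infimum and supremum of $\lambda_{\min}$ and $\lambda_{\max}$ of $\mathcal N^*\mathcal N(\xi)$. These are constant here, which gives \eqref{eq:v3-periodic-bounds}.

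The basis criterion goes as follows. $N_\delta$ is the Fourier-coordinate form of synthesis, as in Corollary~\ref{cor:v3-pattern-criterion}. The family is a Riesz basis iff $N_\delta$ is invertible on $\ell^2$, which holds iff $\mathcal N(\xi)$ is invertible with uniformly bounded inverse. Because $\mathcal N(\xi)$ differs from $H$ by constant-norm unitary factors, this holds iff $H_{p,\delta,s}$ is invertible. In that case the inverse bound is $\lambda_{\min}(H^*H)^{-1/2}$, uniformly in $\xi$.

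The main obstacle is being careful with conventions in the fiberization step. One must confirm the sign conventions of $\Fcal$ in \eqref{eq:DFT} and the indexing of the cosets $\xi+j/p$, $\xi\in J_p$, so that the residue projection really maps to $Q_r=\frac1pv_rv_r^*$ with $v_r$ carrying $\e^{-2\pi ijr/p}$. Since the paper asserts this correspondence before the theorem, I would take it as given and check it only briefly on $e_n$, $n\equiv r \pmod p$. The factorization itself is then the routine computation above.
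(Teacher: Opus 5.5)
Your proposal is correct and follows the paper's proof essentially step by step. It uses the same entrywise verification of $\mathcal N_{p,\delta,s}(\xi)V=H_{p,\delta,s}E_{\delta,s}(\xi)$, the same unitary conjugation of the fiber Gram matrices to the constant matrix $H_{p,\delta,s}^*H_{p,\delta,s}$, and the same inverse formula $\mathcal N_{p,\delta,s}(\xi)^{-1}=VE_{\delta,s}(\xi)^*H_{p,\delta,s}^{-1}$. The one minor difference is that you cite the general ess-inf/ess-sup characterization of matrix multiplication operators for optimality and for the singular case. The paper instead constructs explicit test functions $F_u(\xi)=|J_p|^{-1/2}VE_{\delta,s}(\xi)^*u$, with $u$ an eigenvector of $H_{p,\delta,s}^*H_{p,\delta,s}$ or $u\in\ker H_{p,\delta,s}$.
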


\begin{proof}
Because $Q_qv_r=\delta_{qr}v_r$, the $r$-th column of $\mathcal N_{p,\delta,s}(\xi)V$ is
$$
D_{s_r\delta}(\xi)\frac{v_r}{\sqrt p}.
$$
Its $j$-th entry is
$$
\frac1{\sqrt p}
\e^{-2\pi i s_r\delta(\xi+j/p)}
\e^{-2\pi ijr/p}
=
\frac1{\sqrt p}
\e^{-2\pi ij(r+s_r\delta)/p}
\e^{-2\pi i s_r\delta\xi},
$$
which is exactly the $j$-th entry of the $r$-th column of $H_{p,\delta,s}E_{\delta,s}(\xi)$. Therefore
$$
\mathcal N_{p,\delta,s}(\xi)V
=
H_{p,\delta,s}E_{\delta,s}(\xi),
$$
and \eqref{eq:v3-constant-factorization} follows.

Both $E_{\delta,s}(\xi)$ and $V$ are unitary. Hence
$$
\mathcal N_{p,\delta,s}(\xi)^*
\mathcal N_{p,\delta,s}(\xi)
=
V E_{\delta,s}(\xi)^*
H_{p,\delta,s}^*H_{p,\delta,s}
E_{\delta,s}(\xi)V^*,
$$
so every fiber Gram matrix is unitarily conjugate to the same constant Gram matrix. This proves \eqref{eq:v3-periodic-bounds}.

To see sharpness directly, let $u\in\C^p$ be a unit eigenvector of $H_{p,\delta,s}^*H_{p,\delta,s}$ with eigenvalue $\mu$, and set
$$
F_u(\xi)=|J_p|^{-1/2}V E_{\delta,s}(\xi)^*u.
$$
Then $\int_{J_p}\norm{F_u(\xi)}^2\dd\xi=1$ and
$$
\int_{J_p}
\norm{\mathcal N_{p,\delta,s}(\xi)F_u(\xi)}^2\dd\xi
=
\norm{H_{p,\delta,s}u}^2
=
\mu.
$$
The fiberization is onto, so the smallest and largest eigenvalues are attained as optimal global constants.

If $H_{p,\delta,s}$ is invertible, then
$$
\mathcal N_{p,\delta,s}(\xi)^{-1}
=
V E_{\delta,s}(\xi)^*H_{p,\delta,s}^{-1},
$$
with norm independent of $\xi$, so the synthesis map is boundedly invertible. If $H_{p,\delta,s}$ is singular, choose $0\neq u\in\ker H_{p,\delta,s}$ and use the same fiber transformation to obtain a nonzero vector in the kernel of synthesis. Thus invertibility of $H_{p,\delta,s}$ is equivalent to the Riesz-basis property.
\end{proof}

\begin{remark}\label{rem:v3-right-unitary}
The factorization \eqref{eq:v3-constant-factorization} is a right-unitary reduction. It preserves singular values, rank, invertibility, kernel dimension, and the norm of the inverse. It does not assert that the individual matrices $\mathcal N_{p,\delta,s}(\xi)$ are unitarily similar to $H_{p,\delta,s}$.
\end{remark}

\begin{corollary}[Vandermonde collision criterion]\label{cor:v3-binary-vandermonde}
The periodic binary family is a Riesz basis if and only if
\begin{equation}\label{eq:v3-binary-collision}
r+s_r\delta\not\equiv q+s_q\delta\pmod p
\qquad(0\le q<r\le p-1).
\end{equation}
In particular, every fixed periodic binary pattern is a Riesz basis for
$$
0\le\delta<\frac12.
$$
\end{corollary}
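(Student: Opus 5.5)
By Theorem~\ref{thm:v3-constant-binary}, the periodic binary family is a Riesz basis exactly when the constant matrix $H_{p,\delta,s}$ is invertible. The plan is to recognize $H_{p,\delta,s}$ as a scaled Vandermonde matrix and read off its determinant. Put
$$
z_r=\e^{-2\pi i(r+s_r\delta)/p},\qquad 0\le r\le p-1 .
$$
Then the $(j,r)$ entry of $H_{p,\delta,s}$ is $p^{-1/2}z_r^{\,j}$, so $H_{p,\delta,s}=p^{-1/2}W$, where $W=(z_r^{\,j})_{j,r=0}^{p-1}$ is the Vandermonde matrix in the nodes $z_0,\dots,z_{p-1}$. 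The classical determinant formula gives
$$
\det H_{p,\delta,s}=p^{-p/2}\prod_{0\le q<r\le p-1}(z_r-z_q).
$$
Hence $H_{p,\delta,s}$ is invertible if and only if the nodes $z_r$ are pairwise distinct. Since $z_r=z_q$ holds exactly when
$$
\frac{(r+s_r\delta)-(q+s_q\delta)}{p}\in\Z,
$$
that is, when $r+s_r\delta\equiv q+s_q\delta\pmod p$, this proves the equivalence with \eqref{eq:v3-binary-collision}.

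For the \emph{in particular} statement, fix $0\le\delta<\tfrac12$ and indices $0\le q<r\le p-1$. Suppose, for contradiction, that a collision occurs, so
$$
r-q+(s_r-s_q)\delta=kp\quad\text{for some }k\in\Z .
$$
The quantity $(s_r-s_q)\delta$ lies in $\{0,\pm2\delta\}$, so its absolute value is less than $1$. Combined with $1\le r-q\le p-1$, this gives
$$
0<r-q+(s_r-s_q)\delta<p .
$$
No multiple of $p$ lies strictly between $0$ and $p$, so no collision can occur. The condition \eqref{eq:v3-binary-collision} therefore always holds for $0\le\delta<\tfrac12$, and every fixed periodic binary pattern yields a Riesz basis.

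The only step needing care is the endpoint bookkeeping. The strict inequalities $r-q-2\delta>0$ and $r-q+2\delta<p$ both rely on $2\delta<1$ together with $1\le r-q\le p-1$. At $\delta=\tfrac12$ these strict inequalities fail and collisions can genuinely occur, so the bound $\delta<\tfrac12$ is exactly what the argument uses.
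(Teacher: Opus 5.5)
Your proof is correct and follows the paper's argument: you identify $H_{p,\delta,s}$ as the same normalized Vandermonde matrix in the nodes $z_r=\e^{-2\pi i(r+s_r\delta)/p}$ and use the same determinant formula. The only difference is cosmetic and lies in the \emph{in particular} step. You trap $r-q+(s_r-s_q)\delta$ strictly inside $(0,p)$, whereas the paper observes that a collision would force $(s_r-s_q)\delta$ to be an integer of modulus less than one, hence zero, so that $r-q\in p\Z$; the two arguments are equivalent.
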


\begin{proof}
Set
$$
z_r=\e^{-2\pi i(r+s_r\delta)/p}.
$$
Then $H_{p,\delta,s}$ is the normalized Vandermonde matrix
$$
(H_{p,\delta,s})_{jr}=p^{-1/2}z_r^j,
$$
so
$$
\det H_{p,\delta,s}
=
p^{-p/2}
\prod_{0\le q<r\le p-1}(z_r-z_q).
$$
Thus $H_{p,\delta,s}$ is invertible exactly when the nodes are pairwise distinct, which is equivalent to \eqref{eq:v3-binary-collision}.

If $0\le\delta<1/2$ and a collision occurred, then
$$
r-q+(s_r-s_q)\delta\in p\Z.
$$
Since $r-q$ is an integer, $(s_r-s_q)\delta\in\{0,\pm2\delta\}$ would also be an integer. Its absolute value is strictly less than one, hence it must vanish. Then $r-q\in p\Z$, impossible for distinct residues $q,r$ in $\{0,\ldots,p-1\}$.
\end{proof}

\begin{remark}\label{rem:v3-no-period-uniform}
The preceding corollary is a statement about each fixed periodic pattern. The lower bound may depend on both the period and the pattern; no positive constant uniform over all periods is asserted.
\end{remark}

\subsection{The alternating pattern}

Take
\begin{equation}\label{eq:v3-alternating-pattern}
s_n=(-1)^n.
\end{equation}
The frequency pairs are
$$
\lambda_{2k}=2k+\delta,
\qquad
\lambda_{2k+1}=2k+1-\delta,
$$
with within-pair separation
\begin{equation}\label{eq:v3-alt-gap}
\lambda_{2k+1}-\lambda_{2k}=1-2\delta.
\end{equation}
Thus the geometry already predicts a collision at $\delta=1/2$.

\begin{theorem}[Exact alternating bounds]\label{thm:v3-alternating}
For the pattern \eqref{eq:v3-alternating-pattern} and $0\le\delta\le1/2$,
\begin{equation}\label{eq:v3-alt-bounds}
\bigl(1-\sin(\pi\delta)\bigr)\norm{a}_2^2
\le
\norm{N_\delta a}_2^2
\le
\bigl(1+\sin(\pi\delta)\bigr)\norm{a}_2^2.
\end{equation}
Both constants are optimal. The associated exponential family is a Riesz basis for $0\le\delta<1/2$. At $\delta=1/2$ two neighboring frequencies coincide and the lower bound vanishes.
\end{theorem}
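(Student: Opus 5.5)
The plan is to apply Theorem~\ref{thm:v3-constant-binary} with period $p=2$ and pattern $s_0=+1$, $s_1=-1$. Corollary~\ref{cor:v3-binary-vandermonde} already gives the Riesz-basis conclusion for $0\le\delta<1/2$, so the real task is computing the extreme eigenvalues of the $2\times2$ Gram matrix $H_{2,\delta,s}^*H_{2,\delta,s}$. By \eqref{eq:v3-Hbinary}, the nodes are
$$
z_0=\e^{-\pi i\delta},\qquad z_1=\e^{-\pi i(1-\delta)}=-\e^{\pi i\delta},
$$
and
$$
H=\frac1{\sqrt2}\begin{pmatrix}1&1\\ z_0&z_1\end{pmatrix}.
$$

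I will compute the Gram entries directly. Since the columns have unit norm, the diagonal entries equal $1$. The off-diagonal entry is
$$
\frac12\bigl(1+\overline{z_0}z_1\bigr)=\frac12\bigl(1-\e^{2\pi i\delta}\bigr),
$$
whose modulus is $|\sin(\pi\delta)|$. A $2\times2$ Hermitian matrix with unit diagonal and off-diagonal modulus $\beta$ has eigenvalues $1\pm\beta$. Hence
$$
A_{2,\delta,s}=1-\sin(\pi\delta),\qquad B_{2,\delta,s}=1+\sin(\pi\delta)
$$
for $0\le\delta\le1/2$. By \eqref{eq:v3-periodic-bounds} these constants are optimal, which gives \eqref{eq:v3-alt-bounds} together with optimality.

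As a cross-check against Theorem~\ref{thm:v3-block-spectrum}, I will note that this shows $\rho_{\delta,s}=\sin(\pi\delta)$ for the alternating pattern. This is consistent with \eqref{eq:v3-rho-uniform}, since $\sin(\pi\delta)\le\sin(2\pi\delta)$ on $[0,1/4]$.

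At $\delta=1/2$ the two nodes coincide, $z_0=z_1=-i$, equivalently $\lambda_{2k}=\lambda_{2k+1}$ by \eqref{eq:v3-alt-gap}. The lower constant is then $1-1=0$, $H$ is singular, and the family fails to be a Riesz basis. The Theorem~\ref{thm:v3-constant-binary} reduction is stated for general $\delta$, so it still applies at this endpoint.

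The only step needing care is the index convention in the theorem: the period-$p$ fiberization with residue $r$ carrying sign $s_r$. I will double-check the sign of the node phases, although the modulus $|\sin(\pi\delta)|$ is insensitive to this choice.
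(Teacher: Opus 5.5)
Your proposal is correct and follows essentially the paper's route. Both arguments reduce to the period-two case of Theorem~\ref{thm:v3-constant-binary}, read off the extreme eigenvalues $1\pm\sin(\pi\delta)$ of a $2\times2$ Gram matrix, take optimality and the basis property from that theorem, and get the collision at $\delta=1/2$ from \eqref{eq:v3-alt-gap}. The only difference is computational: the paper works with the $\xi$-dependent fiber symbol $\mathcal N_{2,\delta}(\xi)$, using trace $2$ and determinant $\cos(\pi\delta)$, whereas you compute the constant matrix $H_{2,\delta,s}^*H_{2,\delta,s}$ directly, whose off-diagonal entry $\tfrac12(1-\e^{2\pi i\delta})$ has modulus $\sin(\pi\delta)$, which is slightly more direct.
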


\begin{proof}
Use the period-two symbol with $s_0=1$ and $s_1=-1$. For $\xi\in[-1/2,0)$ let
$$
\alpha=2\pi\delta\xi,
\qquad
\beta=2\pi\delta(\xi+1/2)=\alpha+\pi\delta.
$$
In the fiber coordinates $(f(\xi),f(\xi+1/2))^T$,
$$
\mathcal N_{2,\delta}(\xi)
=
\begin{pmatrix}
\cos\alpha&-i\sin\alpha\\
-i\sin\beta&\cos\beta
\end{pmatrix}.
$$
The Gram matrix has trace $2$, while
$$
\det\mathcal N_{2,\delta}(\xi)
=
\cos\alpha\cos\beta+\sin\alpha\sin\beta
=
\cos(\alpha-\beta)
=
\cos(\pi\delta).
$$
Hence the Gram determinant is $\cos^2(\pi\delta)$ and its eigenvalues solve
$$
\lambda^2-2\lambda+\cos^2(\pi\delta)=0.
$$
For $0\le\delta\le1/2$ the eigenvalues are
$$
1-\sin(\pi\delta),
\qquad
1+\sin(\pi\delta),
$$
independently of $\xi$. Theorem~\ref{thm:v3-constant-binary} gives optimality and the basis property for $\delta<1/2$. At $\delta=1/2$, equation \eqref{eq:v3-alt-gap} gives
$$
2k+\frac12=2k+1-\frac12,
$$
so two exponentials become identical.
\end{proof}

\begin{corollary}\label{cor:v3-alt-rho}
For the alternating pattern,
$$
\rho_{\delta,s}=\sin(\pi\delta),
\qquad 0\le\delta\le\frac12.
$$
In particular, at $\delta=1/4$ the exact lower bound is
$$
1-\frac1{\sqrt2}>0.
$$
\end{corollary}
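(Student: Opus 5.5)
The plan is to combine the two exact descriptions of the optimal lower squared synthesis bound that are already available for the alternating pattern, and then read off $\rho_{\delta,s}$.

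First, apply Theorem~\ref{thm:v3-block-spectrum} to the fixed pattern $s_n=(-1)^n$. It says that the optimal constants in \eqref{eq:v3-exact-pattern-bounds} are exactly $1-\rho_{\delta,s}$ and $1+\rho_{\delta,s}$. Second, Theorem~\ref{thm:v3-alternating} says that for the same operator $N_\delta$ and all $0\le\delta\le 1/2$ the optimal constants are $1-\sin(\pi\delta)$ and $1+\sin(\pi\delta)$.

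Optimal constants are unique, since they are $\inf\sigma(N_\delta^*N_\delta)$ and $\sup\sigma(N_\delta^*N_\delta)$ respectively. Comparing either endpoint therefore gives $\rho_{\delta,s}=\sin(\pi\delta)$. One point needs checking: Theorem~\ref{thm:v3-block-spectrum} was stated with the standing assumption $0\le\delta<1/2$ of Section~\ref{sec:sign-model}. Its proof, however, uses only the algebraic identity of Proposition~\ref{prop:adjoint-sign} and the unitarity of $T_{\pm\delta}$, and both hold for every real $\delta$. So the endpoint $\delta=1/2$ is covered as well. There, $\rho=1$, which is consistent with the collision noted in Theorem~\ref{thm:v3-alternating}.

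For the particular value, substitute $\delta=1/4$ to get $\rho=\sin(\pi/4)=1/\sqrt2$. The exact lower bound is then $1-1/\sqrt2$, which is positive because $1/\sqrt2<1$.

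The only real subtlety is that Theorem~\ref{thm:v3-alternating}'s optimality comes from the fiberization of Theorem~\ref{thm:v3-constant-binary}. That fiberization must describe the same operator $N_\delta$ on $\ell^2(\Z)$, and it does, since it is a unitary equivalence. The constant-in-$\xi$ fiber eigenvalues $1\pm\sin(\pi\delta)$ are therefore precisely the spectral endpoints. As a sanity check, one could also verify directly that $\sin(\pi\delta)\le\sin(2\pi\delta)$ on $[0,1/4]$, in agreement with \eqref{eq:v3-rho-uniform}.
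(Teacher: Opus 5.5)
Your proposal is correct and is essentially the paper's own proof, which simply compares the optimal constants $1\pm\sin(\pi\delta)$ of Theorem~\ref{thm:v3-alternating} with the exact endpoints $1\pm\rho_{\delta,s}$ of Theorem~\ref{thm:v3-block-spectrum}. Your added check that the block-spectrum argument is purely algebraic, and therefore also covers the endpoint $\delta=1/2$ (the standing assumption there was $\delta<1/2$), fills in a detail the paper leaves implicit.
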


\begin{proof}
Compare Theorem~\ref{thm:v3-alternating} with Theorem~\ref{thm:v3-block-spectrum}.
\end{proof}

\subsection{A period-three binary configuration}

Consider the period-three pattern
\begin{equation}\label{eq:v3-period3-pattern}
s_{3k}=s_{3k+1}=1,
\qquad
s_{3k+2}=-1,
\end{equation}
and take $\delta=1/4$. The frequencies in one period are
$$
3k+\frac14,
\qquad
3k+\frac54,
\qquad
3k+\frac74.
$$

\begin{proposition}[Exact period-three bounds]\label{prop:v3-period3}
For \eqref{eq:v3-period3-pattern} at $\delta=1/4$, the associated exponential family is a Riesz basis with optimal squared bounds
\begin{equation}\label{eq:v3-period3-bounds}
A=1-\frac{\sqrt5}{3},
\qquad
B=1+\frac{\sqrt5}{3}.
\end{equation}
Moreover,
\begin{equation}\label{eq:v3-period3-rho}
\rho_{1/4,s}=\frac{\sqrt5}{3}.
\end{equation}
\end{proposition}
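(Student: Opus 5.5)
The plan is to apply Theorem~\ref{thm:v3-constant-binary} with $p=3$, $\delta=1/4$ and $(s_0,s_1,s_2)=(1,1,-1)$. This reduces the whole statement to the spectrum of the $3\times3$ Gram matrix $H^*H$, where $H=H_{3,1/4,s}$. The Riesz-basis property itself needs no computation: $\delta=1/4<1/2$, so Corollary~\ref{cor:v3-binary-vandermonde} applies. Equivalently, the shifted nodes $x_r=r+s_r\delta$, namely $\tfrac14,\tfrac54,\tfrac74$, are pairwise distinct modulo $3$.

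For the Gram matrix, write $z_r=\e^{-2\pi i x_r/3}$. Then
\begin{equation*}
(H^*H)_{qr}=\frac13\sum_{j=0}^{2}\overline{z_q}^{\,j}z_r^{\,j}
=\frac13\sum_{j=0}^{2}\e^{-2\pi i j(x_r-x_q)/3}.
\end{equation*}
The diagonal entries equal $1$. The three differences are $x_1-x_0=1$, $x_2-x_0=\tfrac32$ and $x_2-x_1=\tfrac12$.
\begin{itemize}
\item The difference $1$ gives a full sum of cube roots of unity, so $(H^*H)_{01}=0$.
\item The difference $\tfrac32$ gives $\tfrac13(1-1+1)=\tfrac13$.
\item The difference $\tfrac12$ gives $\tfrac13\bigl(1+\e^{-\pi i/3}+\e^{-2\pi i/3}\bigr)=\tfrac13(1-i\sqrt3)$, whose modulus is $\tfrac23$.
\end{itemize}
Hence $H^*H=I+K$. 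In the ordering $\{0,1\}\cup\{2\}$, $K$ has the block form $\begin{pmatrix}0&c^*\\ c&0\end{pmatrix}$, where $c$ is the $1\times2$ row with entries of moduli $\tfrac13$ and $\tfrac23$.

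Because of this block form, the same argument as in the proof of Theorem~\ref{thm:v3-block-spectrum}, now in finite dimensions, gives the spectrum of $K$. Its nonzero eigenvalues are $\pm\norm{c}$, and since the rank of $K$ is at most $2$ the remaining eigenvalue is $0$. Here
\begin{equation*}
\norm{c}=\sqrt{\tfrac19+\tfrac49}=\frac{\sqrt5}{3}.
\end{equation*}
Therefore $\lambda_{\min}(H^*H)=1-\sqrt5/3$ and $\lambda_{\max}(H^*H)=1+\sqrt5/3$. By \eqref{eq:v3-periodic-bounds}, these are the optimal squared bounds $A$ and $B$.

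The identification $\rho_{1/4,s}=\sqrt5/3$ then follows by the same comparison used in Corollary~\ref{cor:v3-alt-rho}. Theorem~\ref{thm:v3-block-spectrum} shows that the optimal lower bound of $N_\delta^*N_\delta$ is $1-\rho_{\delta,s}$. Theorem~\ref{thm:v3-constant-binary} shows that it equals $\lambda_{\min}(H^*H)$. Equating the two gives $\rho_{1/4,s}=\sqrt5/3$.

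The main obstacle is bookkeeping rather than ideas. I must check the sign and phase conventions in $H$ and verify that the $(0,1)$ entry vanishes. The vanishing is what makes the block structure exact: the two positive-sign residues are orthogonal in the fiber, which mirrors $P_+$ versus $P_-$ at the level of residues. Only the moduli of the off-diagonal entries enter $\norm{c}$, so phase errors are harmless for the final values.
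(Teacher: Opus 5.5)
Your proposal is correct and follows essentially the same route as the paper: you reduce via Theorem~\ref{thm:v3-constant-binary} to the $3\times3$ Gram matrix, observe that the two positive-sign columns are orthogonal so that $H^*H=\begin{pmatrix}I_2&c\\ c^*&1\end{pmatrix}$ with $\norm{c}^2=5/9$, read off the extreme eigenvalues $1\pm\sqrt5/3$, and compare with Theorem~\ref{thm:v3-block-spectrum} to identify $\rho_{1/4,s}$. The differences are cosmetic. You work directly with the differences $x_r-x_q$ rather than first subtracting the common shift $1/4$, and you obtain the spectrum from the block-symmetry and rank argument rather than by splitting off the direction orthogonal to $c$. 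Your entries agree with the paper's; for example, $(1-i\sqrt3)/3=-2\omega^2/3$.
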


\begin{proof}
Subtract the common shift $1/4$ from the three residues. This multiplies the rows of the constant matrix by unimodular factors and therefore preserves singular values. With
$$
\omega=\e^{-2\pi i/3},
$$
the constant matrix is unitarily equivalent on the left to
$$
\widetilde H
=
\frac1{\sqrt3}
\begin{pmatrix}
1&1&1\\
1&\omega&-1\\
1&\omega^2&1
\end{pmatrix}.
$$
Using $1+\omega+\omega^2=0$ and $\overline\omega=\omega^2$,
$$
\widetilde H^*\widetilde H
=
\begin{pmatrix}
1&0&1/3\\
0&1&-2\omega^2/3\\
1/3&-2\omega/3&1
\end{pmatrix}
=
\begin{pmatrix}
I_2&c\\
c^*&1
\end{pmatrix},
\qquad
c=
\begin{pmatrix}
1/3\\
-2\omega^2/3
\end{pmatrix}.
$$
Since
$$
\norm{c}^2=\frac19+\frac49=\frac59,
$$
the one-dimensional subspace in the first two coordinates orthogonal to $c$ has eigenvalue $1$. On the orthogonal complement, the Gram matrix reduces to
$$
\begin{pmatrix}
1&\norm c\\
\norm c&1
\end{pmatrix},
$$
whose eigenvalues are $1\pm\sqrt5/3$. This proves \eqref{eq:v3-period3-bounds}. Comparing with the exact binary formula of Theorem~\ref{thm:v3-block-spectrum} gives \eqref{eq:v3-period3-rho}.
\end{proof}

\begin{remark}[General period-three formula]\label{rem:v3-period3-general}
For the same pattern and $0<\delta<1/2$, the two positive-sign columns are orthogonal and the negative column interacts with their two-dimensional span. If $\rho_3(\delta)$ denotes the norm of this interaction, then
$$
\sigma(H_{3,\delta,s}^*H_{3,\delta,s})
=
\{1,1-\rho_3(\delta),1+\rho_3(\delta)\},
$$
with
\begin{equation}\label{eq:v3-rho3-general}
\rho_3(\delta)
=
\sqrt{
1-
\left(
\frac{\sin(2\pi\delta)}{3\sin(2\pi\delta/3)}
\right)^2
}.
\end{equation}
At $\delta=1/4$ this equals $\sqrt5/3$, while at $\delta=1/2$ it equals $1$. The latter degeneration again corresponds to a collision:
$$
3k+1+\frac12=3k+2-\frac12.
$$
\end{remark}

\section{Critical endpoint and periodic extremizers}
\label{sec:critical-extremizers}

\subsection{Periodic approximation of worst-case binary stability}
\label{subsec:periodic-extremizers}

For a binary pattern $s=(s_n)_{n\in\Z}$ and $0\leq\delta<1/2$, write
$$
A(\delta,s)
:=\inf_{0\neq a\in\ell^2(\Z)}
\frac{\norm{N_{\delta,s}a}_2^2}{\norm a_2^2}
=1-\rho_{\delta,s},
$$
where the last identity follows from the exact block spectral formula of Section~\ref{sec:exact-pattern}. Define the worst binary lower bound
\begin{equation}\label{eq:worst-binary-bound}
A_{\rm bin}(\delta)
:=\inf_{s\in\{-1,+1\}^{\Z}}A(\delta,s).
\end{equation}
For $p\geq1$, define the periodic extremal quantity
\begin{equation}\label{eq:periodic-extremal-Ap}
A_p(\delta)
:=
\min_{s\in\{-1,+1\}^{p}}
\lambda_{\min}\!\left(H_{p,\delta,s}^*H_{p,\delta,s}\right),
\end{equation}
where $H_{p,\delta,s}$ is the constant matrix in Theorem~\ref{thm:v3-constant-binary}.

\begin{theorem}[Periodic approximation of the worst binary lower bound]
\label{thm:periodic-approx-worst}
For every $0\leq\delta<1/2$,
\begin{equation}\label{eq:periodic-approx-worst}
A_{\rm bin}(\delta)
=
\inf_{p\geq1} A_p(\delta).
\end{equation}
Equivalently,
$$
\sup_{s\in\{-1,+1\}^{\Z}}\rho_{\delta,s}
=
\sup_{p\geq1}\ \max_{s\in\{-1,+1\}^{p}}
\rho_{\delta,s}.
$$
\end{theorem}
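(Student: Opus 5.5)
Two inequalities are needed. The first, $A_{\rm bin}(\delta)\le\inf_p A_p(\delta)$, should be immediate once we identify $A_p(\delta)$ as a genuine pattern bound. A pattern $s\in\{-1,+1\}^p$ extends $p$-periodically to $\tilde s\in\{-1,+1\}^{\Z}$. Theorem~\ref{thm:v3-constant-binary} then says that the optimal lower squared synthesis bound of $N_{\delta,\tilde s}$ is $\lambda_{\min}(H_{p,\delta,s}^*H_{p,\delta,s})$. Hence $A(\delta,\tilde s)=\lambda_{\min}(H_{p,\delta,s}^*H_{p,\delta,s})\ge A_{\rm bin}(\delta)$, and minimizing over $s$ and $p$ gives the inequality. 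The equivalent $\rho$-formulation follows from $A(\delta,s)=1-\rho_{\delta,s}$ (Theorem~\ref{thm:v3-block-spectrum}), applied to the periodic extensions.

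The substantive direction is $\inf_pA_p(\delta)\le A_{\rm bin}(\delta)$. Fix an arbitrary pattern $s$ and $\varepsilon>0$. By the definition of $A(\delta,s)$ and the density of $\ell^2_0(\Z)$ (all operators are bounded, by Proposition~\ref{prop:uniform-sign}), choose a finitely supported unit vector $a$ with $\norm{N_{\delta,s}a}^2<A(\delta,s)+\varepsilon$. Its support lies in some window $[-M,M]$.

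The difficulty is that $N_{\delta,s}a=T_\delta P_+a+T_{-\delta}P_-a$ is not finitely supported, because the sinc tails decay only like $1/|m-n|$. The output still lies in $\ell^2$, however, so we take $R$ large enough that the mass of $N_{\delta,s}a$ outside $[-M-R,M+R]$ is below $\varepsilon$. We then choose a period $p>2(M+R)+1$ and let $\tilde s$ be the $p$-periodic pattern that agrees with $s$ on $[-M,M]$ (arbitrary elsewhere in the period).

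The key point is that $N_{\delta,\tilde s}a=N_{\delta,s}a$ \emph{exactly}, since each column $n$ of $N$ depends only on $s_n$, and $a$ is supported where $s=\tilde s$. This is the reason the auxiliary model is easy here: its kernel depends only on the input index. The truncation in $R$ is therefore not even needed. We get $A(\delta,\tilde s)\le\norm{N_{\delta,\tilde s}a}^2<A(\delta,s)+\varepsilon$, and by Theorem~\ref{thm:v3-constant-binary} the left side equals $\lambda_{\min}(H_{p,\delta,\tilde s|_p}^*H_{p,\delta,\tilde s|_p})\ge A_p(\delta)$. It only remains to check that the periodic extension represents a pattern on one period correctly indexed by residues $0,\dots,p-1$. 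This is a harmless translation: $T_t$ commutes with integer shifts (by Remark~\ref{rem:t-one} and the group law), so shifting the window to $[0,p-1]$ leaves all bounds unchanged. Letting $\varepsilon\to0$ and taking the infimum over $s$ finishes the proof.

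The step to watch is precisely the identification of a global quantity with a finitely localized test vector. The points to verify are that the infimum defining $A(\delta,s)$ may be taken over finitely supported vectors, which holds because $N_{\delta,s}$ is bounded, and that the periodic bound from Theorem~\ref{thm:v3-constant-binary} is the exact infimum over all of $\ell^2$, not merely a fiberwise one. Both are already covered by earlier results. Note also that the infimum over $p$ need not be attained, consistent with the statement.
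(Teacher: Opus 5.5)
Your proof is correct and is essentially the paper's argument: periodic patterns are binary patterns on $\Z$, and the reverse inequality uses a finitely supported near-minimizer $a$ together with a $p$-periodic $\widetilde s$ that agrees with $s$ on $\operatorname{supp}a$, so that $N_{\delta,\widetilde s}a=N_{\delta,s}a$ because the sign enters only through the input index. The truncation in $R$ and the translation to the window $[0,p-1]$ are superfluous but harmless: any $p>2M$ already suffices, and a $p$-periodic pattern on $\Z$ is automatically indexed by residues.
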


\begin{proof}
Every periodic pattern is in particular a binary pattern on $\Z$, so
$$
A_{\rm bin}(\delta)\leq \inf_{p\geq1}A_p(\delta).
$$
For the reverse inequality fix an arbitrary binary pattern $s$ and $\varepsilon>0$. Since finitely supported sequences are dense in $\ell^2(\Z)$ and $N_{\delta,s}$ is bounded, there exists $0\neq a\in\ell^2_0(\Z)$ such that
$$
\frac{\norm{N_{\delta,s}a}_2^2}{\norm a_2^2}
<A(\delta,s)+\varepsilon.
$$
Choose an integer $p$ strictly larger than the diameter of $\operatorname{supp}a$. There is a $p$-periodic pattern $\widetilde s$ that agrees with $s$ on $\operatorname{supp}a$. Since the sign in the definition of $N_{\delta,s}$ is attached only to the input index,
$$
N_{\delta,\widetilde s}a=N_{\delta,s}a.
$$
Hence
$$
A_p(\delta)
\leq A(\delta,\widetilde s)
\leq
\frac{\norm{N_{\delta,\widetilde s}a}_2^2}{\norm a_2^2}
< A(\delta,s)+\varepsilon.
$$
Taking the infimum over $s$ and then letting $\varepsilon\downarrow0$ gives the reverse inequality.
\end{proof}

Theorem~\ref{thm:periodic-approx-worst} is useful because Theorem~\ref{thm:v3-constant-binary} converts every term on the right-hand side of \eqref{eq:periodic-approx-worst} into a finite-dimensional eigenvalue problem. Thus the global binary extremal problem is the limit of a sequence of explicit finite Vandermonde problems, even though no single finite period need realize the worst nonperiodic behavior.

\subsection{Critical degeneration and extremal conjectures}
\label{subsec:critical-degeneration}
The critical value $\delta=1/4$ makes this phenomenon particularly transparent. Let
\begin{equation}\label{eq:step-pattern}
s_n^{\rm step}
=
\begin{cases}
+1,&n\leq0,\\
-1,&n\geq1.
\end{cases}
\end{equation}
Let $P_+$ and $P_-$ denote the corresponding coordinate projections.

\begin{lemma}[The critical step pattern and a half-shifted Hilbert matrix]
\label{lem:step-hilbert}
For $\delta=1/4$, the cross compression
$$
C_{\rm step}=P_-T_{1/2}P_+
$$
is unitarily equivalent to the Hankel matrix
\begin{equation}\label{eq:half-hilbert-matrix}
\frac1\pi
\left(
\frac{1}{j+k+1/2}
\right)_{j,k\geq0}.
\end{equation}
Moreover,
\begin{equation}\label{eq:step-rho-one}
\norm{C_{\rm step}}=1,
\qquad
\rho_{1/4,s^{\rm step}}=1,
\qquad
A(1/4,s^{\rm step})=0.
\end{equation}
\end{lemma}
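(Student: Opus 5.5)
The plan has two parts. First, compute the matrix entries of $C_{\rm step}=P_-T_{1/2}P_+$ directly and identify them with the half-shifted Hilbert matrix up to diagonal unitary conjugations. Then obtain the norm identity by combining a free upper bound with a test-vector lower bound.

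\emph{Step 1 (matrix entries).} By \eqref{eq:shifted-sinc}, the entry of $T_{1/2}$ from input index $n$ to output index $m$ is $\sinc_\pi(n-m+\tfrac12)$. For the step pattern the input range is $n\le0$ and the output range is $m\ge1$. We reindex by $n=-k$ and $m=j+1$ with $j,k\ge0$, so that $n-m+\tfrac12=-(j+k+\tfrac12)$. Using $\sin(-\pi(j+k+\tfrac12))=-(-1)^{j+k}$, we get
$$
\sinc_\pi(n-m+\tfrac12)=\frac{(-1)^{j+k}}{\pi(j+k+\tfrac12)}.
$$
The maps $P_+\ell^2(\Z)\to\ell^2(\Z_{\ge0})$, $e_{-k}\mapsto(-1)^ke_k$, and $P_-\ell^2(\Z)\to\ell^2(\Z_{\ge0})$, $e_{j+1}\mapsto(-1)^je_j$, are unitary. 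Conjugating by them removes the factor $(-1)^{j+k}=(-1)^j(-1)^k$, which gives \eqref{eq:half-hilbert-matrix}.

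\emph{Step 2 (upper bound).} Since $C_{\rm step}$ is a compression of a unitary operator, $\norm{C_{\rm step}}\le1$. Alternatively, this follows from \eqref{eq:v3-rho-uniform} at $\delta=1/4$, where $\sin(2\pi\delta)=1$.

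\emph{Step 3 (lower bound).} It remains to show that the Hankel matrix $\Gamma=(1/(j+k+\tfrac12))_{j,k\ge0}$ has norm at least $\pi$. I would use the classical Schur test vectors $x^{(N)}_k=(k+\tfrac12)^{-1/2}$ for $0\le k<N$, and zero otherwise, which satisfy $\norm{x^{(N)}}^2=\log N+O(1)$. The aim is to prove
$$
\ip{\Gamma x^{(N)}}{x^{(N)}}\ge(\pi-o(1))\log N.
$$
The heuristic is the integral identity $\int_0^\infty\frac{dy}{(x+y)\sqrt y}=\frac{\pi}{\sqrt x}$. Applying it in the inner variable makes each row sum approximately $\pi/(j+\tfrac12)$, and summing over $j<N$ gives $\pi\log N$. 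The main obstacle is controlling the errors so that they are $o(\log N)$. There are two of them.
\begin{itemize}
\item The \emph{discretization error} between the sum over $k$ and the integral. The function $y\mapsto 1/((x+y)\sqrt y)$ is decreasing, so the sum over $k\ge0$ of the values at $k+\tfrac12$ compares with the integral from $0$ up to an $O(x^{-1})$-type correction. After weighting by $(j+\tfrac12)^{-1/2}$ and summing over $j$, this contributes only $O(1)$.
\item The \emph{truncation tail} $k\ge N$. The tail integral satisfies $\int_N^\infty\frac{dy}{(x+y)\sqrt y}\lesssim \min(2/\sqrt N,\ \pi/\sqrt x)$. Summing against $(x+\tfrac12)^{-1/2}$ over $x<N$ then gives $O(1)$.
\end{itemize}
Dividing by $\norm{x^{(N)}}^2=\log N+O(1)$ and letting $N\to\infty$ gives $\norm{\Gamma}\ge\pi$, hence $\norm{C_{\rm step}}\ge1$.

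Combining Steps 2 and 3 gives $\norm{C_{\rm step}}=1$, that is, $\rho_{1/4,s^{\rm step}}=1$. Theorem~\ref{thm:v3-block-spectrum} then yields $A(1/4,s^{\rm step})=1-\rho_{1/4,s^{\rm step}}=0$.
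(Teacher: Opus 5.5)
Your proposal is correct. Steps 1 and 2 are the paper's argument; the paper indexes by $n=-j$, $m=k+1$, which is immaterial because the matrix is symmetric. The routes differ only in Step 3.

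The paper evaluates no quadratic form. It notes that $\Gamma-\mathcal H_1$ is Hilbert--Schmidt, where $\mathcal H_1=(1/(j+k+1))_{j,k\ge0}$ is the classical Hilbert matrix. It then invokes Rosenblum's result that $\mathcal H_1$ has essential norm $\pi$. Compact perturbations do not change the essential norm, so $\norm{\Gamma}\ge\norm{\Gamma}_{\rm ess}=\pi$.

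You replace this citation with a self-contained Schur test-vector computation, and your one-sided estimates do close. Take $x=j+\tfrac12$ and $y=k+\tfrac12$, and use $1/(j+k+\tfrac12)\ge1/(x+y)$. For the decreasing function $f(y)=1/((x+y)\sqrt y)$ this gives $\sum_{k\ge0}f(k+\tfrac12)\ge\int_{1/2}^\infty f\ge\pi x^{-1/2}-\sqrt2\,x^{-1}$. The tail is at most $\int_{N-1/2}^\infty y^{-3/2}\dd y$. This choice of $x$ also avoids the singular row $j=0$ that appears if you take $x=j$.

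The paper's route gives slightly more: $\pi$ is the \emph{essential} norm of $\Gamma$, so the degeneration is not a finite-rank effect. You can recover this too. Your normalized test vectors are weakly null, and $\pi I-\Gamma\ge0$ by Step 2, so they form a singular Weyl sequence at $\pi$.

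An even shorter elementary variant is entrywise domination. Since $1/(j+k+\tfrac12)\ge1/(j+k+1)\ge0$, you get $\norm{\Gamma}=\sup_{x\ge0,\,\norm{x}=1}\ip{\Gamma x}{x}\ge\norm{\mathcal H_1}=\pi$, by the sharpness of Hilbert's inequality.
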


\begin{proof}
For $n=-j\leq0$ and $m=k+1\geq1$,
$$
\sinc_\pi(n-m+1/2)
=
\sinc_\pi\bigl(-(j+k+1/2)\bigr)
=
\frac{(-1)^{j+k}}{\pi(j+k+1/2)}.
$$
The factor $(-1)^{j+k}$ is removed by diagonal unitary modulations on the input and output half-lines, which gives \eqref{eq:half-hilbert-matrix}.

Let
$$
\mathcal H_{1/2}
=
\left(\frac{1}{j+k+1/2}\right)_{j,k\geq0},
\qquad
\mathcal H_{1}
=
\left(\frac{1}{j+k+1}\right)_{j,k\geq0}.
$$
Their difference is Hilbert--Schmidt because
$$
\sum_{j,k\geq0}
\left|
\frac{1}{j+k+1/2}-\frac{1}{j+k+1}
\right|^2<\infty.
$$
The classical Hilbert matrix $\mathcal H_1$ has essential norm $\pi$~\cite{Rosenblum1958HilbertII}; hence $\mathcal H_{1/2}$ has essential norm $\pi$ as well. Therefore \eqref{eq:half-hilbert-matrix} has norm at least one. On the other hand $C_{\rm step}$ is a compression of the unitary operator $T_{1/2}$, so $\norm{C_{\rm step}}\leq1$. Thus equality holds. The remaining statements follow from the exact formula $A(\delta,s)=1-\rho_{\delta,s}$.
\end{proof}

\begin{corollary}[Critical degeneration through increasing periods]
\label{cor:critical-periodic-degeneration}
At the critical value $\delta=1/4$,
\begin{equation}\label{eq:critical-inf-periodic-zero}
\inf_{p\geq1}
\min_{s\in\{-1,+1\}^{p}}
\lambda_{\min}\!\left(H_{p,1/4,s}^*H_{p,1/4,s}\right)
=0.
\end{equation}
Nevertheless, for every fixed $p$ and every fixed binary $p$-periodic pattern, the matrix $H_{p,1/4,s}$ is invertible.
\end{corollary}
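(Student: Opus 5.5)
The plan is to combine Theorem~\ref{thm:periodic-approx-worst} with Lemma~\ref{lem:step-hilbert} for the infimum statement, and to use Corollary~\ref{cor:v3-binary-vandermonde} for the fixed-period invertibility.

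For the first assertion, apply Theorem~\ref{thm:periodic-approx-worst} at $\delta=1/4$, which lies in the admissible range $0\le\delta<1/2$. It gives
$$
\inf_{p\ge1}A_p(1/4)=A_{\rm bin}(1/4)=\inf_{s\in\{-1,+1\}^{\Z}}A(1/4,s).
$$
The right-hand side is at most $A(1/4,s^{\rm step})$, and this equals $0$ by Lemma~\ref{lem:step-hilbert}. The left-hand side is nonnegative, because each $A_p(1/4)$ is the smallest eigenvalue of a positive semidefinite Gram matrix $H^*H$. Hence the infimum in \eqref{eq:critical-inf-periodic-zero} is exactly $0$.

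To make the mechanism concrete, one can also unwind the proof of Theorem~\ref{thm:periodic-approx-worst}. Pick finitely supported $a$ with Rayleigh quotient for $N_{1/4,s^{\rm step}}$ below $\varepsilon$; such $a$ exists by density and the approximate singular vectors of Remark~\ref{rem:v3-approx-singular}. Then take $p$ larger than the diameter of its support, and let $\widetilde s$ be the $p$-periodic pattern agreeing with the step pattern on that support. The Rayleigh quotient is unchanged. By Theorem~\ref{thm:v3-constant-binary}, $\lambda_{\min}(H_{p,1/4,\widetilde s}^*H_{p,1/4,\widetilde s})$ is the optimal lower bound for that periodic pattern, so it is also below $\varepsilon$.

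For the second assertion, apply Corollary~\ref{cor:v3-binary-vandermonde} with $\delta=1/4<1/2$. A collision $r+s_r/4\equiv q+s_q/4 \pmod p$ with $q\ne r$ would force $(s_r-s_q)/4\in\{0,\pm1/2\}$ to be an integer. So it must vanish, which leaves $r\equiv q\pmod p$, a contradiction. Therefore the Vandermonde nodes are distinct and $H_{p,1/4,s}$ is invertible for every $p$ and every $s$.

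I expect no real obstacle, since all the analytic content is already contained in Lemma~\ref{lem:step-hilbert} (the essential norm of the half-shifted Hilbert matrix). The only point needing care is consistency of notation. The quantity $A_p(\delta)$ in \eqref{eq:periodic-extremal-Ap} must coincide with the left side of \eqref{eq:critical-inf-periodic-zero}, and the minimum over $s\in\{-1,+1\}^p$ must be read as ranging over $p$-periodic extensions. Both facts hold by definition.
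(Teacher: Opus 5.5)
Your proposal is correct and is essentially the paper's proof: the infimum statement comes from Theorem~\ref{thm:periodic-approx-worst} combined with Lemma~\ref{lem:step-hilbert}, and invertibility comes from the collision criterion of Corollary~\ref{cor:v3-binary-vandermonde} using $1/4<1/2$. Your remark that each $A_p(1/4)\ge 0$ and your unwinding of the periodic-approximation argument are consistent with the paper and simply spell out steps it leaves implicit.
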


\begin{proof}
The first statement follows from Theorem~\ref{thm:periodic-approx-worst} and Lemma~\ref{lem:step-hilbert}. The second follows from the collision criterion in Corollary~\ref{cor:v3-binary-vandermonde}, since $1/4<1/2$.
\end{proof}

Thus the loss of stability at the critical value $\delta=1/4$ is not detected by any fixed periodic pattern: every fixed-period problem remains invertible, whereas the infimum over increasing periods vanishes. The critical obstruction is therefore genuinely asymptotic in the period. This suggests an extremal finite-dimensional problem in its own right. This is consistent with the critical role of the threshold $\delta=1/4$ in Kadec's $1/4$ theorem; see \cite{AlemanyNitzan2025}. For a related but distinct discussion of the thresholds $1/4$ and $1/2$ in perturbed trigonometric interpolation and quadrature, see \cite{AustinTrefethen2017}.

For $p\geq2$, let $s^{\rm blk,p}$ be a balanced two-block pattern, for instance
\begin{equation}\label{eq:balanced-block-pattern}
s_r^{\rm blk,p}
=
\begin{cases}
+1,&0\leq r<\lceil p/2\rceil,\\
-1,&\lceil p/2\rceil\leq r<p.
\end{cases}
\end{equation}
Cyclic translations and a global sign change give equivalent candidates for the extremal problem.

\begin{conjecture}[Balanced two-block extremizer]
\label{conj:two-block-extremizer}
At $\delta=1/4$, for every $p\geq2$,
\begin{equation}\label{eq:two-block-extremizer}
A_p(1/4)
=
\lambda_{\min}\!\left(
H_{p,1/4,s^{\rm blk,p}}^*
H_{p,1/4,s^{\rm blk,p}}
\right).
\end{equation}
More generally, among binary patterns with a prescribed number of plus and minus signs, the minimum is attained by a pattern in which each sign occupies one contiguous cyclic block.
\end{conjecture}

Exhaustive numerical enumeration in double precision for $2\leq p\leq18$ supports Conjecture~\ref{conj:two-block-extremizer}. At $\delta=1/4$ the corresponding minima for $p=2,4,8,12,16,18$ are approximately
$$
0.292893,
\quad
0.206647,
\quad
0.150762,
\quad
0.127577,
\quad
0.114147,
\quad
0.109242,
$$
respectively. A reproducible computation is provided separately.

The appearance of the half-shifted Hilbert matrix in Lemma~\ref{lem:step-hilbert} suggests a logarithmically slow critical degeneration. For the classical $N\times N$ Hilbert matrix $\mathcal H_N=(1/(j+k+1))_{j,k=0}^{N-1}$, de Bruijn and Wilf~\cite{deBruijnWilf1962} proved
$$
\pi-\norm{\mathcal H_N}
=\frac{\pi^5}{2(\log N)^2}
+O\!\left(\frac{\log\log N}{(\log N)^3}\right).
$$
The periodic two-block matrices are not identical to ordinary finite sections of the Hilbert matrix; they contain a cyclic, periodized interaction. Therefore the Hilbert asymptotic cannot be transferred without a comparison theorem.

The currently accessible numerical range is too short to distinguish reliably between competing logarithmic decay rates. Accordingly, the following conjecture should be regarded as a heuristic suggested by the Hilbert-matrix asymptotics, rather than as an asymptotic law strongly supported by the present numerical data.
\begin{conjecture}[Critical logarithmic degeneration]
\label{conj:critical-logarithmic}
There exist constants $0<c<C<\infty$ such that
\begin{equation}\label{eq:critical-logarithmic}
\frac{c}{(\log p)^2}
\leq
A_p(1/4)
\leq
\frac{C}{(\log p)^2}
\end{equation}
for all sufficiently large $p$.
\end{conjecture}

A possible route to Conjecture~\ref{conj:critical-logarithmic} is to compare the cross block of the balanced periodic pattern with a finite section of the operator in \eqref{eq:half-hilbert-matrix}, controlling the wrap-around terms uniformly in $p$. Combined with Conjecture~\ref{conj:two-block-extremizer}, such a comparison would turn the sharp critical behavior of the full binary problem into an explicit finite-section asymptotic.

\section{From sign-pattern interactions to the genuine Gram form}
\label{sec:genuine-gram-analysis}
The preceding sections give a complete description of the auxiliary binary synthesis model. We now return to the genuine Gram problem. We first introduce a two-pattern sesquilinear form that isolates the interaction of shifted-sinc blocks, and then apply the same structure to the near-critical Gram kernel.
\subsection{A four-block sesquilinear model}\label{sec:sesquilinear}

Fix $0\leq\delta,\sigma<\tfrac12$.
Let $s=(s_n)_{n\in\Z}$ and $t=(t_m)_{m\in\Z}$, with
$s_n,t_m\in\{\pm1\}$.
For finitely supported sequences
$a,b\in\ell^2_0(\Z)$, define
\begin{equation}\label{eq:sesquilinear}
\mathcal T(a,b):=\sum_{n,m\in\Z}a_n\overline{b_m}
\sinc_\pi(n-m+s_n\delta-t_m\sigma).
\end{equation}
Write
$$
S_\pm:=\{n\in\Z:s_n=\pm1\},
\qquad
U_\pm:=\{m\in\Z:t_m=\pm1\},
$$
and let $P_\pm^s$ and $P_\pm^t$ be the corresponding coordinate projections.

\begin{lemma}[Four-block decomposition]\label{lem:four-block}
The form in \eqref{eq:sesquilinear} extends uniquely to a bounded
sesquilinear form on
$\ell^2(\Z)\times\ell^2(\Z)$. More precisely,
\begin{equation}\label{eq:four-block-bound}
|\mathcal T(a,b)|
\leq
2\|a\|_{\ell^2}\|b\|_{\ell^2}.
\end{equation}
For all $a,b\in\ell^2(\Z)$,
\begin{equation}\label{eq:four-block}
\mathcal T(a,b)=
\sum_{\varepsilon,\eta\in\{\pm1\}}
\left\langle
T_{\varepsilon\delta-\eta\sigma}
(P_\varepsilon^s a),
P_\eta^t b
\right\rangle_{\ell^2}.
\end{equation}
\end{lemma}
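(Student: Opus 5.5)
The plan is to prove the identity \eqref{eq:four-block} first on finitely supported sequences, then derive the bound \eqref{eq:four-block-bound} from it, and finally extend by density.

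\emph{Identity on $\ell^2_0(\Z)$.} Fix $a,b\in\ell^2_0(\Z)$, so the double sum in \eqref{eq:sesquilinear} is finite. Since $S_+\sqcup S_-=\Z$ and $U_+\sqcup U_-=\Z$, split it into four pieces indexed by $(\varepsilon,\eta)$. On the piece with $n\in S_\varepsilon$ and $m\in U_\eta$ the kernel is $\sinc_\pi\bigl(n-m+(\varepsilon\delta-\eta\sigma)\bigr)$, with a shift that is constant on that piece. By definition \eqref{eq:shifted-sinc}, the piece equals
$$
\sum_m \overline{(P^t_\eta b)_m}\,\bigl(T_{\varepsilon\delta-\eta\sigma}P^s_\varepsilon a\bigr)_m
=\ip{T_{\varepsilon\delta-\eta\sigma}(P^s_\varepsilon a)}{P^t_\eta b}.
$$
All sums are finite, so no convergence issue arises. (Alternatively, the same identity follows from Lemma~\ref{lem:innerprod} with $d=1$.)

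\emph{Bound.} By Theorem~\ref{thm:multiplier} each $T_{\varepsilon\delta-\eta\sigma}$ is unitary, so Cauchy--Schwarz gives
$$
|\mathcal T(a,b)|\le\sum_{\varepsilon,\eta}\norm{P^s_\varepsilon a}\,\norm{P^t_\eta b}
=\bigl(\norm{P^s_+a}+\norm{P^s_-a}\bigr)\bigl(\norm{P^t_+b}+\norm{P^t_-b}\bigr).
$$
Each factor is at most $\sqrt2$ times the corresponding norm, because $x+y\le\sqrt{2(x^2+y^2)}$ and the two projections in each pair are orthogonal. This yields the constant $2$.

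\emph{Extension.} The right-hand side of \eqref{eq:four-block} is a sesquilinear form that is already defined and jointly continuous on $\ell^2\times\ell^2$, being a sum of inner products of bounded operators applied to $a$ and $b$. It coincides with $\mathcal T$ on the dense subspace $\ell^2_0\times\ell^2_0$. A bounded sesquilinear form has a unique continuous extension from a dense subspace, so $\mathcal T$ extends uniquely and \eqref{eq:four-block} holds on all of $\ell^2(\Z)\times\ell^2(\Z)$. The bound passes to the limit.

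No step is a real obstacle. The only point needing care is the bookkeeping of signs in the shift: the argument $n-m+s_n\delta-t_m\sigma$ must match the $T_t$ convention $\sinc_\pi(n-m+t)$, so that $t=\varepsilon\delta-\eta\sigma$ and not its negative.
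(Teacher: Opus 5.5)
Your proposal is correct and follows the paper's proof: split the finite double sum by the sign blocks, identify each block with $\langle T_{\varepsilon\delta-\eta\sigma}P^s_\varepsilon a, P^t_\eta b\rangle$, bound the form using Cauchy--Schwarz, unitarity, and $x+y\le\sqrt{2(x^2+y^2)}$, then extend by density. Your explicit remark that the right-hand side of \eqref{eq:four-block} is already a continuous form on $\ell^2\times\ell^2$ makes precise why the identity persists for the extension, a point the paper leaves implicit.
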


\begin{proof}
For finitely supported sequences, split the defining sum according to
the four sign combinations
$$
(s_n,t_m)=(\varepsilon,\eta).
$$
On each block,
$$
\sinc_\pi(n-m+s_n\delta-t_m\sigma)
=
\sinc_\pi(n-m+\varepsilon\delta-\eta\sigma),
$$
which gives \eqref{eq:four-block}.

By Cauchy--Schwarz and the unitarity of the shifted-sinc operators,
\begin{align*}
|\mathcal T(a,b)|
&\leq
\sum_{\varepsilon,\eta\in\{\pm1\}}
\|P_\varepsilon^s a\|_{\ell^2}
\|P_\eta^t b\|_{\ell^2}\\
&=
\bigl(\|P_+^s a\|_{\ell^2}+\|P_-^s a\|_{\ell^2}\bigr)
\bigl(\|P_+^t b\|_{\ell^2}+\|P_-^t b\|_{\ell^2}\bigr)\\
&\leq
2\|a\|_{\ell^2}\|b\|_{\ell^2}.
\end{align*}
The extension now follows uniquely by density.
\end{proof}

\begin{proposition}[Structured sign patterns]\label{prop:structured-patterns}
If both sign patterns are constant, then
$$
|\mathcal T(a,b)|\leq1
$$
for unit vectors. If exactly one sign pattern is constant, then
$$
|\mathcal T(a,b)|\leq\sqrt2.
$$
\end{proposition}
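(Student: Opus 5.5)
The plan is to start from the four-block decomposition of Lemma~\ref{lem:four-block} and observe that a constant sign pattern collapses two of the four blocks. For unit vectors $a,b$, each surviving block is controlled by unitarity (Proposition~\ref{prop:group-law}) and Cauchy--Schwarz.

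\emph{Both patterns constant.} Suppose $s_n\equiv\varepsilon_0$ and $t_m\equiv\eta_0$. Then $P^s_{\varepsilon_0}=I$, $P^s_{-\varepsilon_0}=0$, and likewise for $t$. Only one term of \eqref{eq:four-block} survives:
$$
\mathcal T(a,b)=\ip{T_{\varepsilon_0\delta-\eta_0\sigma}a}{b}.
$$
Since $T_{\varepsilon_0\delta-\eta_0\sigma}$ is unitary, Cauchy--Schwarz gives $|\mathcal T(a,b)|\le\norm a\norm b=1$.

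\emph{Exactly one pattern constant.} First take $s\equiv\varepsilon_0$ with $t$ arbitrary. Two terms survive:
$$
\mathcal T(a,b)=\ip{T_{\varepsilon_0\delta-\sigma}a}{P^t_+b}+\ip{T_{\varepsilon_0\delta+\sigma}a}{P^t_-b}.
$$
Each term is bounded by $\norm a\,\norm{P^t_\pm b}$. Hence
$$
|\mathcal T(a,b)|\le \norm{P^t_+b}+\norm{P^t_-b}\le\sqrt2\bigl(\norm{P^t_+b}^2+\norm{P^t_-b}^2\bigr)^{1/2}=\sqrt2,
$$
using $(x+y)^2\le 2(x^2+y^2)$ and the orthogonality of $P^t_\pm$. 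The case $t$ constant, $s$ arbitrary, is symmetric: the two surviving terms are bounded by $\norm{P^s_\pm a}\norm b$, and the same inequality applies on the input side.

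For extra care, the one-constant case can be rewritten as the single inner product
$$
\mathcal T(a,b)=\ip{(T_{\varepsilon_0\delta-\sigma}\oplus T_{\varepsilon_0\delta+\sigma})(a,a)}{(P^t_+b,P^t_-b)}.
$$
This form makes the $\sqrt2$ transparent: the vector $(a,a)$ has norm $\sqrt2$, and $(P^t_+b,P^t_-b)$ has norm $1$.

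No step is a real obstacle. The only care needed is to check that the decomposition \eqref{eq:four-block} applies to all of $\ell^2$, which Lemma~\ref{lem:four-block} already provides via density. The second bound need not be sharp, so we only need to state it as an upper bound and make no optimality claim.
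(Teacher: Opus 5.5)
Your proof is correct and is essentially the paper's argument: the four-block decomposition of Lemma~\ref{lem:four-block} collapses to a single unitary block when both patterns are constant, and to two blocks when one is constant, which Cauchy--Schwarz together with $\norm{P_+^t b}+\norm{P_-^t b}\le\sqrt2\,\norm{b}$ bounds by $\sqrt2$. The only difference is that you write out explicitly the two-block estimate that the paper leaves to ``the same argument used in Lemma~\ref{lem:four-block}.''
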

\begin{proof}
If both sign patterns are constant, write
$$
s_n\equiv\varepsilon,
\qquad
t_m\equiv\eta,
\qquad
\varepsilon,\eta\in\{\pm1\}.
$$
Only one block in \eqref{eq:four-block} survives, and
$$
\mathcal T(a,b)
=
\ip{T_{\varepsilon\delta-\eta\sigma}a}{b}.
$$
Since $T_{\varepsilon\delta-\eta\sigma}$ is unitary,
$$
|\mathcal T(a,b)|
\leq
\norm{a}_2\norm{b}_2
=1.
$$
In the special case $\varepsilon=\eta$ and $\delta=\sigma$, this
reduces to $T_0=I$ and hence
$\mathcal T(a,b)=\ip{a}{b}$.

If exactly one sign pattern is constant, only two blocks survive,
and the same argument used in
Lemma~\ref{lem:four-block} gives
$$
|\mathcal T(a,b)|\leq\sqrt2.
$$
\end{proof}

\begin{proposition}[Absence of a uniform positive lower bound]\label{prop:no-lower-sesquilinear}
There is no constant $A>0$ such that
$$
A\leq|\mathcal T(a,b)|
\qquad\text{for all }\norm{a}=\norm{b}=1.
$$
\end{proposition}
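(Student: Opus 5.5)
The statement is a pure existence-of-counterexample claim: for the fixed patterns $s,t$ and parameters $\delta,\sigma$, it suffices to exhibit unit vectors $a,b\in\ell^2(\Z)$ with $\mathcal T(a,b)=0$. Then no $A>0$ can satisfy $A\le|\mathcal T(a,b)|$. The plan is to use the bounded extension from Lemma~\ref{lem:four-block}. For a fixed unit vector $a$, the map $b\mapsto\overline{\mathcal T(a,b)}$ is a bounded linear functional on $\ell^2(\Z)$. By the Riesz representation theorem there is a vector $w_a\in\ell^2(\Z)$ with
$$
\mathcal T(a,b)=\ip{w_a}{b}\qquad\text{for all } b\in\ell^2(\Z).
$$
By the four-block decomposition \eqref{eq:four-block}, this vector is explicitly
$$
w_a=\sum_{\varepsilon,\eta}P^t_\eta T_{\varepsilon\delta-\eta\sigma}P^s_\varepsilon a.
$$

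The key step is that the orthogonal complement of a single vector in $\ell^2(\Z)$ has codimension at most one, so it contains unit vectors. If $w_a=0$, any unit $b$ works. Otherwise choose a unit vector $b\perp w_a$. This exists because $\ell^2(\Z)$ has dimension at least $2$. A concrete choice makes this transparent: take two distinct indices $k\neq l$ and set
$$
b=\frac{\overline{(w_a)_l}\,e_k-\overline{(w_a)_k}\,e_l}{\bigl(|(w_a)_k|^2+|(w_a)_l|^2\bigr)^{1/2}},
$$
with $k,l$ chosen so that the denominator is nonzero. If only one coordinate $(w_a)_k$ is nonzero, take $b=e_l$ for any $l\neq k$. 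In either case $\ip{w_a}{b}=0$, and therefore $\mathcal T(a,b)=0$.

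For a fully explicit instance one may take $a=e_0$, so that $w_a=\bigl(\sinc_\pi(-m+s_0\delta-t_m\sigma)\bigr)_m$, and form $b$ from two coordinates as above. There is essentially no obstacle here. The only point needing care is that the statement concerns $|\mathcal T(a,b)|$ for independent $a$ and $b$, rather than the diagonal quadratic form $\mathcal T(a,a)$. Lower bounds for the diagonal form would be a genuinely different question, tied to $\rho_{\delta,s}$ as in Theorem~\ref{thm:v3-block-spectrum}. I would add a remark to this effect, noting that the meaningful stability notion is the diagonal or operator lower bound, not a uniform lower bound on the off-diagonal form.
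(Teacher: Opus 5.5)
Your proposal is correct and is essentially the paper's proof: the paper defines $\mathcal S=\sum_{\varepsilon,\eta}P_\eta^tT_{\varepsilon\delta-\eta\sigma}P_\varepsilon^s$, so that $\mathcal T(a,b)=\ip{\mathcal Sa}{b}$ (your $w_a$ is exactly $\mathcal Sa$), and then takes any unit $b\perp\mathcal Sa$. The Riesz-representation step is redundant given the explicit four-block formula, and your concrete two-coordinate choice of $b$ is a harmless refinement.
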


\begin{proof}
Define
$$
\mathcal S:=\sum_{\varepsilon,\eta\in\{\pm1\}}
P_\eta^tT_{\varepsilon\delta-\eta\sigma}P_\varepsilon^s,
$$
so that
$$
\mathcal T(a,b)=\ip{\mathcal Sa}{b}.
$$
If $\mathcal Sa\neq0$, choose a unit vector $b$ orthogonal to $\mathcal Sa$. If $\mathcal Sa=0$, every unit vector $b$ gives the same conclusion. In either case, $\mathcal T(a,b)=0$.
\end{proof}

\subsection{The near-critical regime of the genuine Gram form}\label{sec:near-critical}

Assume
\begin{equation}\label{eq:near-critical-delta}
\delta_n=s_n\left(\frac14-\varepsilon_n\right),
\qquad s_n\in\{\pm1\},
\qquad \varepsilon_n\geq0.
\end{equation}
Let $M$ be the unitary modulation introduced in Section~\ref{sec:gram}. It is enough to study
\begin{equation}\label{eq:qtilde}
\widetilde{\mathcal Q}(b):=\ip{\widetilde Gb}{b}
=\sum_{n,m\in\Z}b_n\overline{b_m}\widetilde K_{m,n},
\end{equation}
where $\widetilde K_{m,n}$ is given by \eqref{eq:modulated-kernel}. Since $\widetilde G=MGM$,
$$
\widetilde{\mathcal Q}(b)\geq0.
$$
Let
$$
S_\pm:=\{n\in\Z:s_n=\pm1\},
\qquad
b^\pm:=P_\pm b.
$$
For later use, for $\theta\in\R\setminus\Z$ define the modulated shifted sinc operator
\begin{equation}\label{eq:modulated-shifted-operator}
\mathcal H_\theta:=MT_\theta M,
\qquad
(\mathcal H_\theta a)_m
=\sum_{n\in\Z}a_n\frac{\sin(\pi\theta)}{\pi(n-m+\theta)}.
\end{equation}
It is unitary because it is unitarily equivalent to $T_\theta$. In the notation of \cite{DeCarliSamad2016}, this operator coincides with $\mathcal D_{-\theta}$. In particular, the half-shifted operator $H_{1/2}$ belongs to their unitary one-parameter group. The new feature of the present analysis is the interaction of these operators with the coordinate projections determined by the sign pattern, rather than the unitarity of the underlying discrete Hilbert transform.

\subsection{The critical form}

If $\varepsilon_n\equiv0$, that is, $\delta_n=s_n/4$, the same-sign terms vanish for $n\neq m$, whereas the opposite-sign terms produce a half-integer shift. Hence
\begin{equation}\label{eq:qcrit}
\widetilde{\mathcal Q}_{\mathrm{crit}}(b)
=\norm{b}_2^2+2\operatorname{Re}\ip{\mathcal H_{1/2}b^+}{b^-}.
\end{equation}
Therefore
\begin{equation}\label{eq:qcrit-bounds}
\norm{b}_2^2-2\norm{b^+}_2\norm{b^-}_2
\leq\widetilde{\mathcal Q}_{\mathrm{crit}}(b)
\leq\norm{b}_2^2+2\norm{b^+}_2\norm{b^-}_2
\leq2\norm{b}_2^2.
\end{equation}

\subsection{Kernel expansions}

Let $b\in\ell^2_0(\Z)$ and write
\begin{equation}\label{eq:support-parameters}
S:=\operatorname{supp}(b),
\qquad N:=|S|,
\qquad \eta_S:=\sup_{n\in S}\varepsilon_n,
\qquad \omega_S:=\sup_{n,m\in S}|\varepsilon_n-\varepsilon_m|.
\end{equation}
Assume, for convenience, that $\eta_S\leq1/8$.

\begin{lemma}[Same-sign and opposite-sign expansions]\label{lem:kernel-expansions}
Let $n,m\in S$, $n\neq m$. If $s_n=s_m$, then
\begin{equation}\label{eq:same-expansion}
\widetilde K_{m,n}
=s_n\frac{\varepsilon_m-\varepsilon_n}{n-m}
+r^{\mathrm{same}}_{m,n},
\end{equation}
where
\begin{equation}\label{eq:same-remainder}
|r^{\mathrm{same}}_{m,n}|
\leq C\frac{|\varepsilon_m-\varepsilon_n|^2}{|n-m|}.
\end{equation}

If $s_n\neq s_m$, then
\begin{equation}\label{eq:cross-expansion}
\widetilde K_{m,n}
=\frac{\sin\!\left(\pi(s_n-s_m)/4\right)}
{\pi\left(n-m+(s_n-s_m)/4\right)}
+r^{\mathrm{cross}}_{m,n},
\end{equation}
where
\begin{equation}\label{eq:cross-remainder}
|r^{\mathrm{cross}}_{m,n}|
\leq C\left(
\frac{\eta_S}{|n-m+(s_n-s_m)/4|^2}
+\frac{\eta_S^2}{|n-m+(s_n-s_m)/4|}
\right).
\end{equation}
\end{lemma}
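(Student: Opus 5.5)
The plan is to substitute \eqref{eq:near-critical-delta} directly into the modulated kernel \eqref{eq:modulated-kernel} and Taylor-expand in the small parameters $\varepsilon_n,\varepsilon_m\in[0,\eta_S]$, with $\eta_S\le 1/8$. Write $u:=\delta_n-\delta_m$, so that $\widetilde K_{m,n}=\sin(\pi u)/(\pi(n-m+u))$, and treat the two sign cases separately.

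\emph{Same-sign case.} If $s_n=s_m$, then $u=s_n(\varepsilon_m-\varepsilon_n)$ and $|u|\le\eta_S\le1/8$. We then expand numerator and denominator separately.
\begin{itemize}
\item For the numerator, $\sin(\pi u)=\pi u+O(|u|^3)$.
\item For the denominator, $n-m$ is a nonzero integer and $|u|\le1/8$, so $|n-m+u|\ge\tfrac78|n-m|$, and
$$\frac1{n-m+u}=\frac1{n-m}-\frac{u}{(n-m)(n-m+u)}.$$
\end{itemize}
Multiplying out, the leading term is $u/(n-m)=s_n(\varepsilon_m-\varepsilon_n)/(n-m)$, which is \eqref{eq:same-expansion}. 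The error terms are $O(|u|^2/|n-m|^2)$ and $O(|u|^3/|n-m|)$. Since $|n-m|\ge1$ and $|u|\le1$, both are bounded by $C|u|^2/|n-m|$, which gives \eqref{eq:same-remainder}.

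\emph{Opposite-sign case.} If $s_n\ne s_m$, set $\theta_0:=(s_n-s_m)/4=\pm\tfrac12$. Then
$$u=\theta_0+h,\qquad h:=-s_n\varepsilon_n+s_m\varepsilon_m=-s_n(\varepsilon_n+\varepsilon_m),$$
so $|h|\le2\eta_S\le1/4$. Let $D:=n-m+\theta_0$, a half-integer with $|D|\ge1/2$. Then $|D+h|\ge|D|-1/4\ge|D|/2$. Consider the smooth function $f(x)=\sin(\pi x)/(\pi x)$ near $x=D$, with its $\theta$ dependence written out as
$$g(h)=\frac{\sin(\pi(\theta_0+h))}{\pi(D+h)}.$$
We decompose $g(h)-g(0)$ as
$$\frac{\sin(\pi(\theta_0+h))-\sin(\pi\theta_0)}{\pi(D+h)}+\sin(\pi\theta_0)\left(\frac1{\pi(D+h)}-\frac1{\pi D}\right).$$
The two pieces are estimated as follows.
\begin{itemize}
\item In the first piece, $\sin(\pi\theta_0)=\pm1$ is an extremum of the sine, so $\sin(\pi(\theta_0+h))-\sin(\pi\theta_0)=\pm(\cos(\pi h)-1)=O(h^2)$. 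This piece is therefore $O(\eta_S^2/|D|)$.
\item The second piece is $O(|h|/|D|^2)=O(\eta_S/|D|^2)$.
\end{itemize}
Together these give \eqref{eq:cross-remainder} with the leading term $g(0)$, which is exactly the displayed term in \eqref{eq:cross-expansion}.

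\emph{Main obstacle.} The only real subtlety is the first piece of the cross case. A naive Lipschitz bound would only give $O(\eta_S/|D|)$, which is too weak. The point is that the half-integer shift sits exactly at a critical point of $\sin(\pi\cdot)$, so the linear term vanishes. The other requirement is to keep the denominators uniformly away from zero, which is where the assumption $\eta_S\le1/8$ enters. All constants $C$ are then absolute.
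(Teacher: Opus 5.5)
Your proposal is correct and follows essentially the same route as the paper. Both arguments substitute the near-critical shifts directly into the modulated kernel, and in the cross case both use that the numerator becomes $\pm\cos(\pi h)$, which differs from $\pm1$ only quadratically. Your split of the remainder into a numerator-change piece and a denominator-change piece matches the paper's identity term by term.
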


\begin{proof}
Suppose first that $s_n=s_m=s$. Set $k=n-m$, $h=\varepsilon_m-\varepsilon_n$.
Then $\delta_n-\delta_m=s h$, and, by \eqref{eq:modulated-kernel},
\begin{equation}\label{eq:same-kernel-exact}
\widetilde K_{m,n}=\frac{\sin(\pi s h)}{\pi(k+s h)}.
\end{equation}
Since $\sin(\pi s h)=s\sin(\pi h)$,
$$\widetilde K_{m,n}-s\frac{h}{k}=s\left(\frac{\sin(\pi h)}{\pi(k+s h)}-\frac{h}{k}\right)=s\frac{k\sin(\pi h)-\pi h(k+s h)}{\pi k(k+s h)}=s\frac{k(\sin(\pi h)-\pi h)-\pi s h^2}{\pi k(k+s h)}.$$
Because $|h|\leq1/4$ and $k\in\Z\setminus\{0\}$, one has $|k+s h|\geq|k|/2$. Moreover, $|\sin(\pi h)-\pi h|\leq C|h|^3$. It follows that
$$\left|\widetilde K_{m,n}-s\frac{h}{k}\right|\leq C\left(\frac{|h|^3}{|k|}+\frac{|h|^2}{|k|^2}\right)\leq C\frac{|h|^2}{|k|},$$
which proves \eqref{eq:same-expansion}--\eqref{eq:same-remainder}.

Suppose now that $s_n\neq s_m$. Then $s_m=-s_n$. Set $s=s_n$, $k=n-m$, and $u=\varepsilon_n+\varepsilon_m$. We have
$\delta_n-\delta_m$ $=s\left(\frac12-u\right)$. Let
$x_0=k+\frac{s}{2}$ $=k+\frac{s_n-s_m}{4}$.
Then
\begin{equation}
\widetilde K_{m,n}=\frac{\sin(\pi(s/2-su))}{\pi(x_0-su)}=\frac{s\cos(\pi u)}{\pi(x_0-su)},
\label{eq:cross-kernel-exact}
\end{equation}
whereas 
$$\frac{\sin(\pi s/2)}{\pi x_0}=\frac{s}{\pi x_0}.$$
Comparing \eqref{eq:cross-kernel-exact} with this critical modulated kernel gives
$$\widetilde K_{m,n}-\frac{s}{\pi x_0}=\frac{s}{\pi}\left(\frac{\cos(\pi u)}{x_0-su}-\frac{1}{x_0}\right)=\frac{s}{\pi}\left(
\frac{\cos(\pi u)-1}{x_0-su}
+\frac{s u}{x_0(x_0-su)}
\right).$$
Since $|x_0|\geq1/2$, $u\leq2\eta_S\leq1/4$, and therefore $|x_0-su|\geq|x_0|/2$, while $|\cos(\pi u)-1|\leq C u^2$, we obtain
$$|r^{\mathrm{cross}}_{m,n}|\leq C\left(\frac{u}{|x_0|^2}+\frac{u^2}{|x_0|}\right)\leq C\left(
\frac{\eta_S}{|n-m+(s_n-s_m)/4|^2}
+\frac{\eta_S^2}{|n-m+(s_n-s_m)/4|}
\right).$$
This proves \eqref{eq:cross-expansion}--\eqref{eq:cross-remainder}.
\end{proof}

\begin{remark}\label{rem:modulated-cross-sign}
The critical shift is $(s_n-s_m)/4=\pm1/2$. Because the alternating factor has already been removed in \eqref{eq:modulated-kernel}, the relevant critical operator is $\mathcal H_{1/2}=MT_{1/2}M$, rather than $T_{1/2}$ itself.
\end{remark}

Define the same-sign drift term
\begin{equation}\label{eq:drift}
D_S(b):=\sum_{\substack{n,m\in S\\ n\neq m,\ s_n=s_m}}
b_n\overline{b_m}\,
 s_n\frac{\varepsilon_m-\varepsilon_n}{n-m}.
\end{equation}
Then
\begin{equation}\label{eq:q-expansion}
\widetilde{\mathcal Q}(b)
=\widetilde{\mathcal Q}_{\mathrm{crit}}(b)+D_S(b)+R_S(b),
\end{equation}
where $R_S$ collects the remainders in \eqref{eq:same-remainder} and \eqref{eq:cross-remainder}.

\begin{proposition}[Support-dependent perturbative estimate]\label{prop:support-dependent}
There exists a universal constant $C>0$ such that, for every finitely supported $b\in\ell^2_0(\Z)$,
\begin{align}
|D_S(b)|
&\leq C\omega_S\log(1+N)\norm{b}_2^2,
\label{eq:drift-bound}\\
|R_S(b)|
&\leq C\bigl(\eta_S+\eta_S^2\log(1+N)\bigr)\norm{b}_2^2.
\label{eq:remainder-bound}
\end{align}
Consequently,
\begin{equation}\label{eq:q-final-expansion}
\widetilde{\mathcal Q}(b)
=\norm{b}_2^2
+2\operatorname{Re}\ip{\mathcal H_{1/2}b^+}{b^-}
+E_S(b),
\end{equation}
with
\begin{equation}\label{eq:error-bound}
|E_S(b)|
\leq C\bigl(\omega_S\log(1+N)+\eta_S+\eta_S^2\log(1+N)\bigr)\norm{b}_2^2.
\end{equation}
In particular,
\begin{equation}\label{eq:q-upper}
\widetilde{\mathcal Q}(b)
\leq\left[2+C\bigl(\omega_S\log(1+N)+\eta_S+\eta_S^2\log(1+N)\bigr)\right]\norm{b}_2^2.
\end{equation}
\end{proposition}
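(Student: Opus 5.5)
We bound each piece of the expansion \eqref{eq:q-expansion} by Schur's test on the finite index set $S$, using Lemma~\ref{lem:kernel-expansions} for the pointwise kernel bounds. The one elementary input is a harmonic-sum estimate. For $n\in S$ and $|S|=N$,
\begin{equation*}
\sum_{m\in S,\ m\neq n}\frac{1}{|n-m|}\leq 2\sum_{k=1}^{N}\frac1k\leq C\log(1+N),
\end{equation*}
because the $N-1$ distinct nonzero integers $n-m$ are dominated by the $N-1$ values of smallest modulus, namely $\pm1,\pm2,\dots$. The same bound holds with $|n-m|$ replaced by $|n-m\pm1/2|\geq|n-m|/2$ for $n\neq m$. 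If $n=m$ with opposite signs is impossible, we note that $|x_0|\geq 1/2$ gives a bounded diagonal-type contribution. Similarly,
\begin{equation*}
\sum_{m}\frac{1}{|n-m+(s_n-s_m)/4|^2}\leq C
\end{equation*}
uniformly, since the shifts $k\pm1/2$ with $k\in\Z$ all stay at distance at least $1/2$ from zero.

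\emph{Drift term.} The kernel of $D_S$ satisfies $|s_n(\varepsilon_m-\varepsilon_n)/(n-m)|\leq\omega_S/|n-m|$. Schur's test with constant weights therefore gives an operator norm of at most $\omega_S\sup_n\sum_{m\neq n}|n-m|^{-1}\leq C\omega_S\log(1+N)$. Hence $|D_S(b)|\leq C\omega_S\log(1+N)\norm{b}_2^2$, which is \eqref{eq:drift-bound}.

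\emph{Remainder.} We treat the same-sign and cross remainders separately. For same-sign pairs, \eqref{eq:same-remainder} gives $|r^{\rm same}_{m,n}|\leq C\omega_S^2/|n-m|\leq C\eta_S^2/|n-m|$, because $\omega_S\leq\eta_S$ when all $\varepsilon_n\geq0$. Schur's test then yields $C\eta_S^2\log(1+N)$. For cross pairs, \eqref{eq:cross-remainder} splits into two kernels. The first is $\eta_S/|x_0|^2$, whose row sums are bounded by $C\eta_S$ uniformly in $N$; this is the support-independent part. The second is $\eta_S^2/|x_0|$, which yields $C\eta_S^2\log(1+N)$. Adding these gives \eqref{eq:remainder-bound}. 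We must make sure the diagonal is treated correctly: $\widetilde K_{n,n}=1$ appears in both $\widetilde{\mathcal Q}$ and $\widetilde{\mathcal Q}_{\rm crit}$, so it contributes nothing to $R_S$.

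\emph{Conclusion.} Next we identify $\widetilde{\mathcal Q}_{\rm crit}$ with \eqref{eq:qcrit}. Restricted to $S$, the critical cross kernel $s/(\pi x_0)$ coincides with the kernel of $\mathcal H_{1/2}$ on the pairs with $s_n=+1$, $s_m=-1$ and with its adjoint on the reversed pairs. This gives the term $2\operatorname{Re}\ip{\mathcal H_{1/2}b^+}{b^-}$, and the sign conventions must be checked against \eqref{eq:modulated-shifted-operator}. We then set $E_S=D_S+R_S$, which gives \eqref{eq:q-final-expansion}--\eqref{eq:error-bound}. Finally, \eqref{eq:q-upper} follows from the upper half of \eqref{eq:qcrit-bounds}, i.e.\ $\widetilde{\mathcal Q}_{\rm crit}(b)\leq 2\norm b_2^2$, which uses the unitarity of $\mathcal H_{1/2}$.

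The main obstacle is keeping the $\eta_S/|x_0|^2$ part free of any $\log N$ factor, since this is what makes the $\eta_S$ term in \eqref{eq:error-bound} support-independent. The harmonic-sum counting is routine by comparison.
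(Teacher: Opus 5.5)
Your proposal is correct and follows essentially the same route as the paper. Both arguments apply Schur's test on the finite set $S$, using the harmonic row sums for $1/|n-m|$ and $1/|n-m\pm1/2|$ and the uniform bound on $\sum 1/|n-m\pm1/2|^2$ for the support-independent $\eta_S$ term, and then conclude with \eqref{eq:qcrit} and \eqref{eq:qcrit-bounds}. You also make explicit two points the paper leaves implicit: that $\omega_S\le\eta_S$ (since $\varepsilon_n\ge0$) in the same-sign remainder, and that the critical cross kernel is the kernel of $\mathcal H_{1/2}$ and of its adjoint; the sign convention for the latter does check out against \eqref{eq:modulated-shifted-operator}.
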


\begin{proof}
Since $S$ is finite, the kernels
$$
K_1(m,n):=\frac{\mathbf1_{\{m\neq n\}}}{|n-m|},
\qquad
K_{1/2}(m,n):=\frac{1}{|n-m+1/2|},
$$
restricted to $S\times S$, satisfy
\begin{equation}\label{eq:row-harmonic}
\sup_{m\in S}\sum_{\substack{n\in S\\n\neq m}}\frac1{|n-m|}
\leq2\sum_{r=1}^{N-1}\frac1r
\leq C\log(1+N),
\end{equation}
and
\begin{equation}\label{eq:row-half}
\sup_{m\in S}\sum_{n\in S}\frac1{|n-m+1/2|}
\leq C\log(1+N),
\qquad
\sup_{m\in S}\sum_{n\in S}\frac1{|n-m+1/2|^2}
\leq C.
\end{equation}
The same bounds hold with $1/2$ replaced by $-1/2$. Therefore Schur's test is applicable on the finite set $S$.

Using \eqref{eq:drift}, the bound $|\varepsilon_m-\varepsilon_n|\leq\omega_S$, and \eqref{eq:row-harmonic} yields \eqref{eq:drift-bound}; the factor $s_n$ has modulus one. Likewise, \eqref{eq:same-remainder}, \eqref{eq:cross-remainder}, and \eqref{eq:row-harmonic}--\eqref{eq:row-half} give \eqref{eq:remainder-bound}. Formula \eqref{eq:q-final-expansion} follows from \eqref{eq:qcrit} and \eqref{eq:q-expansion}, and \eqref{eq:q-upper} follows from \eqref{eq:qcrit-bounds}.
\end{proof}

\begin{remark}
The logarithmic dependence on the support size is the price of the finite-support Schur estimate. A support-independent bound would require additional assumptions on $(\varepsilon_n)$ or a different operator-theoretic argument.
\end{remark}

\section{A support-independent neighborhood of structured systems}
\label{sec:structured-neighborhood}

The finite-support expansion of Section~\ref{sec:near-critical} provides first-order information on the genuine Gram form, but its error estimates contain a logarithmic factor depending on the support size. We now consider a complementary question:
the stability of an exponential system under arbitrary uniformly small perturbations of its frequencies, when the unperturbed system is already known to be a Riesz basis.

The general principle that exponential Riesz bases remain stable under sufficiently small perturbations is classical. In particular, \cite{DeCarliPathak2014} established explicit quantitative stability conditions for exponential bases and frames on multidimensional domains, including disconnected domains. Their results provide general perturbation thresholds in terms of the original Riesz bounds and the geometry of the underlying domain. The theorem below is not intended as an improvement of these general stability results.

Our emphasis is instead on the interaction between perturbation stability and the exact spectral information obtained in the preceding sections. For periodic binary patterns, the constant-matrix reduction gives the optimal Riesz bounds explicitly as the extreme eigenvalues of a finite Gram matrix. These bounds can then be used as input for a perturbation estimate around the corresponding structured exponential basis.

For this purpose, we record a direct operator-norm estimate obtained from the Taylor expansion of the exponential factors. Its proof uses only the upper Riesz bound of the reference system and yields explicit lower and upper bounds for the perturbed family. In particular, the resulting estimates are independent of the support size and remain valid for arbitrary uniformly small index-dependent perturbations, even when the perturbed frequency pattern is no longer periodic.

The contribution of this section is therefore the combination of exact pattern-dependent spectral bounds with an explicit stability neighborhood of a structured reference system, rather than a new general perturbation principle. The sufficient conditions obtained below are not claimed to be optimal. Moreover, this approach requires a reference system with a strictly positive lower Riesz bound and therefore does not resolve the support-independent near-critical problem for arbitrary nonperiodic sign configurations.
\begin{theorem}[Uniform perturbation of a structured exponential basis]\label{thm:v3-uniform-perturbation}
Let
$$
(S_0a)(x)=\sum_{n\in\Z}a_n\e^{2\pi i\lambda_n^0x}
$$
be the synthesis operator of a Riesz basis of $L^2(-1/2,1/2)$ with optimal bounds $A_0,B_0>0$. Let $(r_n)$ be real with
$$
\sup_n|r_n|\le\eta,
$$
and define
$$
(S_ra)(x)=\sum_{n\in\Z}a_n\e^{2\pi i(\lambda_n^0+r_n)x}.
$$
Then
\begin{equation}\label{eq:v3-perturbation-norm}
\norm{S_r-S_0}_{\ell^2\to L^2}
\le
\sqrt{B_0}\bigl(\e^{\pi\eta}-1\bigr).
\end{equation}
If
\begin{equation}\label{eq:v3-perturbation-condition}
\sqrt{B_0}\bigl(\e^{\pi\eta}-1\bigr)<\sqrt{A_0},
\end{equation}
then the perturbed family is a Riesz basis, with bounds
\begin{align}
A_r
&\ge
\left[
\sqrt{A_0}-\sqrt{B_0}\bigl(\e^{\pi\eta}-1\bigr)
\right]^2,
\label{eq:v3-perturbed-A}\\
B_r
&\le
B_0\e^{2\pi\eta}.
\label{eq:v3-perturbed-B}
\end{align}
\end{theorem}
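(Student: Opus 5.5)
The plan is the classical Kadec/Paley--Wiener Taylor-expansion argument. Write
$$
\e^{2\pi i(\lambda_n^0+r_n)x}-\e^{2\pi i\lambda_n^0x}
=\e^{2\pi i\lambda_n^0x}\sum_{k\ge1}\frac{(2\pi i r_n x)^k}{k!}.
$$
For finitely supported $a$ this gives
$$
(S_r-S_0)a(x)=\sum_{k\ge1}\frac{(2\pi i x)^k}{k!}\,\bigl(S_0(r^k a)\bigr)(x),
$$
where $(r^ka)_n=r_n^ka_n$. The interchange of the finite sum over $n$ with the absolutely convergent series in $k$ is harmless.

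Next I estimate each term. On $(-1/2,1/2)$ one has $|x|\le 1/2$, so multiplication by $(2\pi i x)^k$ has operator norm at most $\pi^k$ on $L^2(-1/2,1/2)$. The upper Riesz bound gives $\norm{S_0 b}\le\sqrt{B_0}\norm b_2$. The diagonal multiplier $a\mapsto r^ka$ has norm at most $\eta^k$. Hence the $k$-th term has norm at most $\sqrt{B_0}(\pi\eta)^k/k!\,\norm a_2$. Summing over $k\ge1$ gives $\norm{(S_r-S_0)a}\le\sqrt{B_0}(\e^{\pi\eta}-1)\norm a_2$ on $\ell^2_0$. By density, $S_r$ extends boundedly to all of $\ell^2$ and \eqref{eq:v3-perturbation-norm} holds. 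The point to state carefully is that $S_r$ is a priori defined only on finite sequences, and the estimate itself furnishes its bounded extension.

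For the Riesz-basis conclusion, $S_0$ is a bounded bijection $\ell^2\to L^2(-1/2,1/2)$ with $\norm{S_0a}\ge\sqrt{A_0}\norm a$, so $\norm{S_0^{-1}}\le A_0^{-1/2}$. Write $S_r=S_0\bigl(I+S_0^{-1}(S_r-S_0)\bigr)$. Under \eqref{eq:v3-perturbation-condition},
$$
\norm{S_0^{-1}(S_r-S_0)}\le A_0^{-1/2}\sqrt{B_0}(\e^{\pi\eta}-1)<1,
$$
so a Neumann series shows that $S_r$ is a bounded bijection, i.e.\ the perturbed family is a Riesz basis. The lower bound \eqref{eq:v3-perturbed-A} then follows from the reverse triangle inequality:
$$
\norm{S_ra}\ge\norm{S_0a}-\norm{(S_r-S_0)a}\ge\bigl[\sqrt{A_0}-\sqrt{B_0}(\e^{\pi\eta}-1)\bigr]\norm a_2,
$$
where the bracket is positive by hypothesis. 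Squaring gives the claim.

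The only mildly delicate step is the upper bound \eqref{eq:v3-perturbed-B}, because the triangle inequality alone gives
$$
\norm{S_ra}\le\sqrt{B_0}\bigl(1+\e^{\pi\eta}-1\bigr)\norm a_2=\sqrt{B_0}\,\e^{\pi\eta}\norm a_2.
$$
Squaring this yields exactly $B_0\e^{2\pi\eta}$, so \eqref{eq:v3-perturbed-B} follows with no further work.

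I expect no genuine obstacle. The main things to state carefully are the interchange of summations on $\ell^2_0$ and the extension by density. I will also note that optimality of $A_0,B_0$ is not used, only their validity as Riesz bounds.
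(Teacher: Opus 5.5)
Your proposal is correct and follows the paper's proof essentially step by step. It uses the same Taylor expansion with termwise bound $\sqrt{B_0}(\pi\eta)^k/k!$, the same density extension, the direct and reverse triangle inequalities for the two bounds, and a Neumann-series argument for surjectivity. The only cosmetic difference is that you invert $I+S_0^{-1}(S_r-S_0)$ on $\ell^2$, whereas the paper inverts $S_rS_0^{-1}=I+(S_r-S_0)S_0^{-1}$ on $L^2(-1/2,1/2)$, which is immaterial.
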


\begin{proof}
For a finitely supported sequence,
$$
S_ra-S_0a
=
\sum_{k=1}^{\infty}
\frac{(2\pi ix)^k}{k!}
\sum_{n\in\Z}a_nr_n^k\e^{2\pi i\lambda_n^0x}.
$$
Since $|x|\le1/2$ and $S_0$ has upper bound $B_0$,
\begin{align*}
\norm{S_ra-S_0a}_{L^2}
&\le
\sum_{k=1}^{\infty}\frac{\pi^k}{k!}
\norm{\sum_na_nr_n^k\e^{2\pi i\lambda_n^0x}}_{L^2}\\
&\le
\sqrt{B_0}
\sum_{k=1}^{\infty}\frac{(\pi\eta)^k}{k!}\norm a_2\\
&=
\sqrt{B_0}\bigl(\e^{\pi\eta}-1\bigr)\norm a_2.
\end{align*}
Density gives \eqref{eq:v3-perturbation-norm}. The direct and reverse triangle inequalities yield \eqref{eq:v3-perturbed-A}--\eqref{eq:v3-perturbed-B}. Finally,
$$
\norm{(S_r-S_0)S_0^{-1}}
\le
\frac{\sqrt{B_0}(\e^{\pi\eta}-1)}{\sqrt{A_0}}<1,
$$
so $S_rS_0^{-1}$ is invertible by a Neumann-series argument. Hence $S_r$ is onto as well as bounded below.
\end{proof}

\begin{corollary}[A global alternating near-critical neighborhood]\label{cor:v3-critical-neighborhood}
Let
$$
\lambda_n
=
n+(-1)^n\left(\frac14-\varepsilon_n\right),
\qquad
0\le\varepsilon_n\le\eta.
$$
If
\begin{equation}\label{eq:v3-eta-condition}
\eta<\frac{\log2}{2\pi},
\end{equation}
then $\{\e^{2\pi i\lambda_nx}\}_{n\in\Z}$ is a Riesz basis. One may take
\begin{align}
A
&=
\left[
\sqrt{1-2^{-1/2}}
-
\sqrt{1+2^{-1/2}}\bigl(\e^{\pi\eta}-1\bigr)
\right]^2,
\label{eq:v3-alt-pert-A}\\
B
&=
\bigl(1+2^{-1/2}\bigr)\e^{2\pi\eta}.
\label{eq:v3-alt-pert-B}
\end{align}
\end{corollary}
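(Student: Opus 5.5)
The plan is to apply Theorem~\ref{thm:v3-uniform-perturbation}, taking as reference system the alternating pattern at the critical value $\delta=1/4$. Concretely, set
$$
\lambda_n^0=n+(-1)^n\tfrac14 ,
$$
which is the auxiliary binary family with $s_n=(-1)^n$ and $\delta=1/4$. The synthesis operator of this family, written in Fourier coordinates, is $N_{1/4}$ for the alternating pattern. By Theorem~\ref{thm:v3-alternating}, this reference family is a Riesz basis, with optimal squared bounds
$$
A_0=1-\sin(\pi/4)=1-2^{-1/2},
\qquad
B_0=1+2^{-1/2}.
$$
Corollary~\ref{cor:v3-alt-rho} records the same values through $\rho=2^{-1/2}$.

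The perturbed frequencies are then $\lambda_n=\lambda_n^0+r_n$ with $r_n=-(-1)^n\varepsilon_n$. These are real and satisfy $|r_n|\le\eta$. Theorem~\ref{thm:v3-uniform-perturbation} therefore applies once condition \eqref{eq:v3-perturbation-condition} is checked. Substituting $A_0$ and $B_0$ into \eqref{eq:v3-perturbed-A} and \eqref{eq:v3-perturbed-B} gives exactly \eqref{eq:v3-alt-pert-A} and \eqref{eq:v3-alt-pert-B}.

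The only substantive step is verifying \eqref{eq:v3-perturbation-condition} from \eqref{eq:v3-eta-condition}. The required inequality is
$$
\e^{\pi\eta}-1<\sqrt{\frac{A_0}{B_0}}=\sqrt{\frac{\sqrt2-1}{\sqrt2+1}}=\sqrt2-1 ,
$$
where the last equality follows from multiplying numerator and denominator by $\sqrt2-1$. This is equivalent to $\e^{\pi\eta}<\sqrt2$, that is, $\eta<\log 2/(2\pi)$, which is precisely \eqref{eq:v3-eta-condition}.

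I expect this algebraic simplification of $A_0/B_0$ to be the only point requiring care. The rest is a direct substitution, together with the observation that the signs of the $\varepsilon_n$ are irrelevant because the perturbation theorem uses only $\sup|r_n|$.
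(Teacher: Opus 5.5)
Your proposal is correct and matches the paper's proof: you take the alternating pattern at $\delta=1/4$ as the reference system with $A_0=1-2^{-1/2}$ and $B_0=1+2^{-1/2}$ from Theorem~\ref{thm:v3-alternating}, set $r_n=-(-1)^n\varepsilon_n$, and reduce \eqref{eq:v3-perturbation-condition} to $\e^{\pi\eta}-1<\sqrt2-1$, that is, $\eta<\log 2/(2\pi)$. The explicit constants then follow by direct substitution into \eqref{eq:v3-perturbed-A}--\eqref{eq:v3-perturbed-B}.
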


\begin{proof}
At $\varepsilon_n=0$, Theorem~\ref{thm:v3-alternating} with $\delta=1/4$ gives
$$
A_0=1-2^{-1/2},
\qquad
B_0=1+2^{-1/2}.
$$
The additional perturbation is $r_n=-(-1)^n\varepsilon_n$. Condition \eqref{eq:v3-perturbation-condition} becomes
$$
\e^{\pi\eta}-1
<
\sqrt{\frac{1-2^{-1/2}}{1+2^{-1/2}}}
=
\sqrt2-1,
$$
which is equivalent to \eqref{eq:v3-eta-condition}.
\end{proof}
\begin{remark}[Comparison with general perturbation bounds]
The sufficient condition in Corollary~\ref{cor:v3-critical-neighborhood}
is obtained by combining the exact alternating Riesz
bounds with the elementary operator-norm estimate of
Theorem~\ref{thm:v3-uniform-perturbation}. It should not
be interpreted as an optimal stability radius.

Indeed, \cite{DeCarliPathak2014}
provide a general perturbation criterion which, on the
unit interval, allows a uniform perturbation of size
$\eta$ whenever
$$
\eta<
K\left(\frac{A_0}{B_0}\right),
\qquad
K(t)=\frac14-
\frac1\pi\arcsin\left(
\frac{1-\sqrt{t}}{\sqrt2}
\right).
$$
For the alternating reference system at $\delta=1/4$,
the exact bounds give
$$
\sqrt{\frac{A_0}{B_0}}=\sqrt2-1,
$$
and hence their criterion yields
$$
\eta<
\frac14-\frac1\pi\arcsin(\sqrt2-1),
$$
which is slightly less restrictive than the condition
$\eta<(\log2)/(2\pi)$ obtained above.

The role of Corollary~\ref{cor:v3-critical-neighborhood}
is therefore to illustrate how the exact spectral
information available for a structured pattern can be
combined with a simple, support-independent perturbation
estimate. No improvement over the general perturbation
thresholds of \cite{DeCarliPathak2014} is claimed.
\end{remark}

\begin{remark}\label{rem:v3-two-nearcritical-results}
The support-independent theorem and the expansion of Section~\ref{sec:near-critical} answer different questions. The former gives a global stability neighborhood around an already stable structured system. The latter retains first-order information about the separate roles of the perturbation size and its oscillation and applies directly to the genuine Gram kernel.
\end{remark}

\section{Three or more local frequency shifts}
\label{sec:finite-alphabet}

The binary decomposition extends naturally when the local perturbation takes values in a finite set. The resulting Gram operator reveals exactly what is special about the binary case: with two values there is one off-diagonal operator and one scalar invariant, whereas with three or more values one obtains a genuine block-operator matrix.

Let $\Gamma_1,\ldots,\Gamma_L$ be a partition of $\Z$, let $P_j$ be the coordinate projection onto
$$
\mathcal H_j=\ell^2(\Gamma_j),
$$
and choose real shifts $t_1,\ldots,t_L$. Define
$$
\tau_n=t_j\quad(n\in\Gamma_j),
\qquad
\lambda_n=n+\tau_n.
$$
If frequencies coincide, repetitions are retained. The Fourier-coordinate synthesis operator on $L^2(-1/2,1/2)$ is
\begin{equation}\label{eq:v3-finite-alphabet-N}
N=\sum_{j=1}^L T_{t_j}P_j.
\end{equation}
Since each $T_{t_j}$ is unitary,
$$
\norm{Na}
\le
\sum_{j=1}^L\norm{P_ja}
\le
\sqrt L\norm a.
$$

\begin{proposition}[Block Gram matrix]\label{prop:v3-block-gram-L}
Relative to
$$
\ell^2(\Z)=\bigoplus_{j=1}^L\mathcal H_j,
$$
the Gram operator $G=N^*N$ has blocks
\begin{equation}\label{eq:v3-Gjk}
G_{jk}
=
P_jT_{t_k-t_j}P_k:
\mathcal H_k\longrightarrow\mathcal H_j.
\end{equation}
In particular,
$$
G_{jj}=I_{\mathcal H_j},
\qquad
G_{kj}=G_{jk}^*.
$$
Define the real symmetric matrix $R=(r_{jk})_{j,k=1}^L$ by
$$
r_{jj}=0,
\qquad
r_{jk}=\norm{G_{jk}}\quad(j\ne k),
\qquad
\eta=\norm R_{\C^L\to\C^L}.
$$
Then
\begin{equation}\label{eq:v3-L-bound}
(1-\eta)\norm a^2
\le
\norm{Na}^2
\le
(1+\eta)\norm a^2.
\end{equation}
Hence $\eta<1$ is sufficient for the associated family to be a Riesz sequence.
\end{proposition}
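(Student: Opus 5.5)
The block formula comes first and is purely algebraic, mirroring Proposition~\ref{prop:adjoint-sign}. By Proposition~\ref{prop:group-law}, $T_{t_j}^*=T_{-t_j}$, so
\[
N^*=\sum_j P_jT_{-t_j},
\qquad
N^*N=\sum_{j,k}P_jT_{-t_j}T_{t_k}P_k=\sum_{j,k}P_jT_{t_k-t_j}P_k .
\]
Reading off the $(j,k)$ entry relative to $\bigoplus_j\mathcal H_j$ gives \eqref{eq:v3-Gjk}. For $j=k$ the operator is $P_jT_0P_j=P_j=I_{\mathcal H_j}$. The adjoint relation follows from
\[
G_{jk}^*=P_kT_{t_k-t_j}^*P_j=P_kT_{t_j-t_k}P_j=G_{kj}.
\]
Since $N$ is bounded (by $\sqrt L$), these identities, first obtained on finitely supported sequences, extend to all of $\ell^2(\Z)$ by density.

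For the quantitative bound, write $G=I+K$, where $K$ is the self-adjoint block operator with zero diagonal blocks and off-diagonal blocks $G_{jk}$. Then $\norm{Na}^2=\ip{Ga}{a}=\norm a^2+\ip{Ka}{a}$, so it suffices to prove $|\ip{Ka}{a}|\le\eta\norm a^2$. Set $a_j=P_ja$ and $x=(\norm{a_1},\dots,\norm{a_L})\in\R^L$, so that $\norm x_{\C^L}=\norm a$. Then
\[
|\ip{Ka}{a}|\le\sum_{j\ne k}|\ip{G_{jk}a_k}{a_j}|\le\sum_{j\ne k}r_{jk}\norm{a_k}\norm{a_j}=\ip{Rx}{x}\le\norm R\,\norm x^2=\eta\norm a^2 .
\]
Substituting this into the expression for $\norm{Na}^2$ yields both inequalities in \eqref{eq:v3-L-bound}.

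Finally, if $\eta<1$, the lower inequality shows that $N$ is bounded below. As in Corollary~\ref{cor:v3-pattern-criterion}, $N$ is the Fourier-coordinate representation of synthesis, since it corresponds to the family $\e^{2\pi i\lambda_nx}$ through Lemma~\ref{lem:innerprod}. A bounded-below synthesis operator is injective with closed range, so the family is a Riesz sequence. The argument has no genuine obstacle. The only points needing care are that $\ip{Rx}{x}\le\norm R\norm x^2$ is valid because $R$ is real symmetric with nonnegative entries and $x$ has nonnegative entries, and that the block identity must be justified on all of $\ell^2$ through boundedness.
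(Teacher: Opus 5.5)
Your proof is correct and follows the paper's argument essentially verbatim. Like the paper, you expand $N^*N=\sum_{j,k}P_jT_{t_k-t_j}P_k$ via the group law, then bound $|\langle (G-I)a,a\rangle|\le v^TRv\le\norm R\norm v^2$ with $v=(\norm{a_1},\dots,\norm{a_L})$. Two minor remarks: the density step is unnecessary, since each $T_{t_j}$ is already unitary on all of $\ell^2(\Z)$, and $v^TRv\le\norm R\norm v^2$ holds by Cauchy--Schwarz for any matrix, not only for nonnegative ones.
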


\begin{proof}
From \eqref{eq:v3-finite-alphabet-N},
$$
N^*N
=
\sum_{j,k=1}^L
P_jT_{-t_j}T_{t_k}P_k
=
\sum_{j,k=1}^L
P_jT_{t_k-t_j}P_k,
$$
which gives the block identity and the diagonal and adjoint relations.

Write $a_j=P_ja$ and
$$
v=(\norm{a_1},\ldots,\norm{a_L})^T.
$$
Then
\begin{align*}
\left|\ip{(G-I)a}{a}\right|
&\le
\sum_{j\ne k}\norm{G_{jk}}\norm{a_k}\norm{a_j}\\
&=
v^TRv\\
&\le
\norm R\norm v^2
=
\eta\norm a^2.
\end{align*}
Since $\ip{Ga}{a}=\norm{Na}^2$, inequality \eqref{eq:v3-L-bound} follows.
\end{proof}

\begin{remark}\label{rem:v3-L-not-optimal}
The comparison bound \eqref{eq:v3-L-bound} is sufficient and need not be optimal. For a general nonperiodic finite partition it yields a lower synthesis estimate when $\eta<1$, but it does not by itself provide completeness. In the binary case $L=2$, the self-adjoint off-diagonal part has the special form
$$
\begin{pmatrix}0&C^*\\C&0\end{pmatrix},
$$
so one norm $\norm C$ gives the exact endpoints. For $L\ge3$, no analogous one-scalar reduction is available in general.
\end{remark}

\subsection{Periodic finite-alphabet patterns}

Assume now that the local shift sequence is $p$-periodic:
$$
\tau_{n+p}=\tau_n.
$$
Write $\tau_r$ for the shift carried by residue class $r$, $0\le r\le p-1$. Define
\begin{equation}\label{eq:v3-Htau}
H_{p,\tau}
=
\left(
\frac1{\sqrt p}\e^{-2\pi ij(r+\tau_r)/p}
\right)_{j,r=0}^{p-1},
\end{equation}
\begin{equation}\label{eq:v3-Etau}
E_\tau(\xi)
=
\operatorname{diag}\left(\e^{-2\pi i\tau_r\xi}\right)_{r=0}^{p-1}.
\end{equation}

\begin{theorem}[Exact periodic reduction for finitely many local shifts]\label{thm:v3-periodic-finite-alphabet}
The period-$p$ synthesis symbol satisfies
\begin{equation}\label{eq:v3-finite-alphabet-factorization}
\mathcal N_{p,\tau}(\xi)
=
\sum_{r=0}^{p-1}D_{\tau_r}(\xi)Q_r
=
H_{p,\tau}E_\tau(\xi)V^*.
\end{equation}
The optimal squared synthesis bounds are
\begin{equation}\label{eq:v3-finite-alphabet-exact-bounds}
A=\lambda_{\min}(H_{p,\tau}^*H_{p,\tau}),
\qquad
B=\lambda_{\max}(H_{p,\tau}^*H_{p,\tau}).
\end{equation}
The exponential family is a Riesz basis if and only if
\begin{equation}\label{eq:v3-finite-alphabet-collision}
r+\tau_r\not\equiv q+\tau_q\pmod p
\qquad(0\le q<r\le p-1).
\end{equation}
\end{theorem}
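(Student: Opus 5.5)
The proof will follow the binary case in Theorem~\ref{thm:v3-constant-binary} and Corollary~\ref{cor:v3-binary-vandermonde} closely, since nothing there used $s_r\delta\in\{\pm\delta\}$ except in the final collision discussion. The first equality in \eqref{eq:v3-finite-alphabet-factorization} is the period-$p$ fiberization applied to \eqref{eq:v3-finite-alphabet-N}. Writing $N=\sum_{r=0}^{p-1}T_{\tau_r}\Pi_r$, where $\Pi_r$ is the coordinate projection onto the residue class $r$ modulo $p$, the standard fiberization sends $\Pi_r$ to $Q_r$ and $T_{\tau_r}$ to multiplication by $D_{\tau_r}(\xi)$. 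This is the same principle already used for \eqref{eq:v3-periodic-symbol}.

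For the second equality, I compute $\mathcal N_{p,\tau}(\xi)V$ column by column. Since $Q_qv_r=\delta_{qr}v_r$, the $r$-th column is $D_{\tau_r}(\xi)v_r/\sqrt p$. Its $j$-th entry is
$$
p^{-1/2}\e^{-2\pi i\tau_r(\xi+j/p)}\e^{-2\pi ijr/p}=p^{-1/2}\e^{-2\pi ij(r+\tau_r)/p}\e^{-2\pi i\tau_r\xi},
$$
which is the $(j,r)$ entry of $H_{p,\tau}E_\tau(\xi)$. Unitarity of $V$ then gives the factorization.

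The bounds \eqref{eq:v3-finite-alphabet-exact-bounds} follow as before. Because $E_\tau(\xi)$ and $V$ are unitary, every fiber Gram matrix is unitarily conjugate to $H_{p,\tau}^*H_{p,\tau}$, so the essential infimum and supremum of the extreme fiber eigenvalues are exactly the extreme eigenvalues of this constant matrix. Sharpness follows from the test functions $F_u(\xi)=|J_p|^{-1/2}VE_\tau(\xi)^*u$, with $u$ an extremal eigenvector, together with the surjectivity of the fiberization.

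For the Riesz-basis criterion, I argue as in the binary proof. If $H_{p,\tau}$ is invertible, the fiberwise inverse $VE_\tau(\xi)^*H_{p,\tau}^{-1}$ has norm independent of $\xi$, so synthesis is boundedly invertible. If $H_{p,\tau}$ is singular, lifting a kernel vector $u$ through $F_u$ produces a nonzero element of the kernel of synthesis. Finally, $H_{p,\tau}$ is the normalized Vandermonde matrix in the nodes $z_r=\e^{-2\pi i(r+\tau_r)/p}$. Its determinant is a nonzero multiple of $\prod_{q<r}(z_r-z_q)$, and $z_r=z_q$ holds exactly when $r+\tau_r\equiv q+\tau_q\pmod p$. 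This gives \eqref{eq:v3-finite-alphabet-collision}.

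The main point needing care is the fiberization identity itself, namely the correct normalization and orientation of $V$ and of the fiber coordinates $(\xi+j/p)$. I would handle it by checking the definitions directly on a single residue class, using the Fourier multiplier representation of Theorem~\ref{thm:multiplier}. Once this is in place, the rest is linear algebra identical to the binary case. Unlike Corollary~\ref{cor:v3-binary-vandermonde}, no automatic noncollision statement follows here, because differences $\tau_r-\tau_q$ may be integers.
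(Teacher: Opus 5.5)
Your proposal is correct and follows essentially the same route as the paper. It computes $\mathcal N_{p,\tau}(\xi)V$ column by column, notes that every fiber Gram matrix is unitarily conjugate to $H_{p,\tau}^*H_{p,\tau}$ and uses the test functions $F_u$ for sharpness, settles the basis dichotomy with the fiberwise inverse $VE_\tau(\xi)^*H_{p,\tau}^{-1}$ or a lifted kernel vector, and obtains the collision criterion from the Vandermonde determinant; your plan to check the fiberization orientation directly on a single residue class is a sensible addition to what the paper simply asserts.
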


\begin{proof}
The computation of Theorem~\ref{thm:v3-constant-binary} does not use the fact that the shifts take only two values. The $j$-th entry of the $r$-th column of $\mathcal N_{p,\tau}(\xi)V$ is
$$
\frac1{\sqrt p}
\e^{-2\pi i\tau_r(\xi+j/p)}
\e^{-2\pi ijr/p}
=
\frac1{\sqrt p}
\e^{-2\pi ij(r+\tau_r)/p}
\e^{-2\pi i\tau_r\xi}.
$$
This proves \eqref{eq:v3-finite-alphabet-factorization}. Since the two right-hand factors are unitary, the sharp norm argument gives \eqref{eq:v3-finite-alphabet-exact-bounds}.

Set
$$
z_r=\e^{-2\pi i(r+\tau_r)/p}.
$$
Then
$$
\det H_{p,\tau}
=
p^{-p/2}
\prod_{0\le q<r\le p-1}(z_r-z_q),
$$
so $H_{p,\tau}$ is invertible exactly under \eqref{eq:v3-finite-alphabet-collision}. Uniform invertibility of the fiber symbol follows from
$$
\mathcal N_{p,\tau}(\xi)^{-1}
=
V E_\tau(\xi)^*H_{p,\tau}^{-1}.
$$
If the determinant vanishes, a nonzero vector in $\ker H_{p,\tau}$ produces a nonzero kernel vector for synthesis after the inverse fiber transformation.
\end{proof}

\begin{corollary}[A sufficient oscillation condition]\label{cor:v3-shift-oscillation}
Every fixed periodic local-shift pattern satisfying
\begin{equation}\label{eq:v3-oscillation-condition}
\max_{0\le r<p}\tau_r-
\min_{0\le r<p}\tau_r<1
\end{equation}
generates a Riesz basis of $L^2(-1/2,1/2)$.
\end{corollary}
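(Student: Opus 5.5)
The plan is to reduce the claim to the collision criterion \eqref{eq:v3-finite-alphabet-collision} of Theorem~\ref{thm:v3-periodic-finite-alphabet}. That theorem says the periodic family is a Riesz basis exactly when the nodes $z_r=\e^{-2\pi i(r+\tau_r)/p}$ are pairwise distinct. So it suffices to show that \eqref{eq:v3-oscillation-condition} rules out every congruence $r+\tau_r\equiv q+\tau_q \pmod p$ with $0\le q<r\le p-1$. This is the finite-alphabet analogue of the argument in Corollary~\ref{cor:v3-binary-vandermonde}.

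Suppose a collision occurs for some $q<r$. Then there is an integer $\ell$ with
$$
(r-q)+(\tau_r-\tau_q)=\ell p.
$$
Write $d=r-q$, so $1\le d\le p-1$, and $\theta=\tau_r-\tau_q$. Hypothesis \eqref{eq:v3-oscillation-condition} gives $|\theta|<1$. The displayed identity shows that $\theta=\ell p-d$ is an integer, and an integer of absolute value less than one must vanish. Hence $d=\ell p$, which is impossible for $1\le d\le p-1$. So no collision occurs, $H_{p,\tau}$ is invertible, and Theorem~\ref{thm:v3-periodic-finite-alphabet} gives the Riesz basis property, with bounds given by the extreme eigenvalues of $H_{p,\tau}^*H_{p,\tau}$.

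There is no real obstacle. The only step that needs care is seeing that integrality of $\theta$ comes from $d$ and $\ell p$ both being integers. That forces $\theta=0$, rather than only $|\theta|<1$. The hypothesis cannot be weakened to allow oscillation equal to $1$. For example, with $p=2$, $\tau_0=1/2$ and $\tau_1=-1/2$, we get $0+1/2=1-1/2$, so two frequencies coincide; this is the $\delta=1/2$ collision of Theorem~\ref{thm:v3-alternating}.
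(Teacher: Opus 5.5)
Your proposal is correct and follows the paper's own proof: a collision $r-q+\tau_r-\tau_q\in p\Z$ would force $\tau_r-\tau_q$ to be an integer of modulus less than one, hence zero. That leaves $r-q\in p\Z$, which is impossible for distinct residues, so Theorem~\ref{thm:v3-periodic-finite-alphabet} gives the Riesz basis, and your sharpness example ($p=2$, $\tau_0=1/2$, $\tau_1=-1/2$, the $\delta=1/2$ collision of Theorem~\ref{thm:v3-alternating}) is also correct.
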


\begin{proof}
If a collision occurred, then
$$
r-q+\tau_r-\tau_q\in p\Z.
$$
Hence $\tau_r-\tau_q$ would be an integer. Under \eqref{eq:v3-oscillation-condition} its absolute value is strictly less than one, so it must be zero. This would imply $r-q\in p\Z$, impossible for distinct residues.
\end{proof}

\begin{remark}\label{rem:v3-common-shift}
The criterion concerns the range of the local shifts. Adding a common constant to every $\tau_r$ multiplies every exponential by the same unimodular function and therefore leaves all Gram eigenvalues unchanged.
\end{remark}

\subsection{A three-shift example without binary spectral symmetry}

Consider the period-three pattern
$$
\tau_{3k}=\frac14,
\qquad
\tau_{3k+1}=0,
\qquad
\tau_{3k+2}=-\frac14.
$$
Its frequencies in one period are
$$
3k+\frac14,
\qquad
3k+1,
\qquad
3k+\frac74.
$$
Subtracting the common shift $1/4$ preserves the Gram eigenvalues and reduces the constant matrix, up to a left unitary diagonal factor, to
$$
\widetilde H
=
\frac1{\sqrt3}
\begin{pmatrix}
1&1&1\\
1&-i&-1\\
1&-1&1
\end{pmatrix}.
$$
A direct multiplication gives
\begin{equation}\label{eq:v3-three-shift-gram}
\widetilde H^*\widetilde H
=
\begin{pmatrix}
1&-i/3&1/3\\
i/3&1&-i/3\\
1/3&i/3&1
\end{pmatrix}
=
\frac43 I_3-\frac13ww^*,
\qquad
w=
\begin{pmatrix}
1\\-i\\-1
\end{pmatrix}.
\end{equation}
Since $\norm w^2=3$, the eigenvalue on $\operatorname{span}\{w\}$ is $1/3$, while both eigenvalues on $w^\perp$ are $4/3$. Therefore the family is a Riesz basis with exact squared bounds
\begin{equation}\label{eq:v3-three-shift-bounds}
A=\frac13,
\qquad
B=\frac43.
\end{equation}
In particular,
$$
\sigma(\widetilde H^*\widetilde H-I)
=
\left\{-\frac23,\frac13\right\},
$$
which is not symmetric about zero. Thus the exact binary form $1\pm\rho$ does not extend to a general three-shift pattern; the correct periodic invariant is the full finite Gram matrix $H_{p,\tau}^*H_{p,\tau}$.

\section{Two shifted lattices on two intervals}
\label{sec:twointervals}

\subsection{Integer translation}\label{sec:integer-twointerval}

We now turn from index-dependent local perturbations to the disconnected-domain branch of the paper. The simplest nontrivial configuration consists of two complete frequency lattices acting on a union of two unit intervals. In this setting the synthesis operator reduces to an explicit $2\times2$ matrix field, providing a concrete model for the general finite-cell fiberization developed in the next section. Here the frequency set is the union of two complete lattices, $\Z$ and $\Z+\delta$, while the spatial domain is the union of two intervals. The resulting synthesis map has two channels and is naturally represented by a $2\times2$ matrix.

Let $\alpha\in\Z$ with $|\alpha|\geq1$, let $0<\delta<1$, and define $E=[0,1]\cup[\alpha,\alpha+1]$.
For finitely supported sequences $a=(a_n)$ and $b=(b_n)$, set
$$A(x)=\sum_{n\in\Z}a_n\e^{2\pi i n x},\quad B_\delta(x)=\sum_{n\in\Z}b_n\e^{2\pi i(n+\delta)x},\quad F(x)=A(x)+B_\delta(x).$$
\begin{theorem}[Exact bounds and the integer-translation criterion]\label{thm:integer-twointerval}
For every finitely supported $a,b\in\ell^2(\Z)$,
\begin{align}
&2\bigl(1-|\cos(\pi\alpha\delta)|\bigr)
\bigl(\norm{a}_2^2+\norm{b}_2^2\bigr)
\leq\norm{F}_{L^2(E)}^2\notag\\
&\qquad\leq
2\bigl(1+|\cos(\pi\alpha\delta)|\bigr)
\bigl(\norm{a}_2^2+\norm{b}_2^2\bigr).
\label{eq:integer-bounds}
\end{align}
The constants are optimal. Moreover,
\begin{equation}\label{eq:integer-basis-criterion}
\{\e^{2\pi i n x}\}_{n\in\Z}
\cup
\{\e^{2\pi i(n+\delta)x}\}_{n\in\Z}
\text{ is a Riesz basis of }L^2(E)
\quad\Longleftrightarrow\quad
\alpha\delta\notin\Z.
\end{equation}
\end{theorem}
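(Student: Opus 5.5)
Translate the second interval back to $[0,1]$ and read off a $2\times2$ matrix at each point $x\in[0,1)$. Write $A$ for the $1$-periodic function with Fourier coefficients $a$, and $\tilde B(x)=\sum_n b_n\e^{2\pi i n x}$, so that $B_\delta(x)=\e^{2\pi i\delta x}\tilde B(x)$. On $[0,1]$ we have $F(x)=A(x)+\e^{2\pi i\delta x}\tilde B(x)$. On $[\alpha,\alpha+1]$, substitute $x=y+\alpha$ with $y\in[0,1]$. Since $\alpha\in\Z$, the periodic parts are unchanged, and
$$
F(y+\alpha)=A(y)+\e^{2\pi i\delta\alpha}\e^{2\pi i\delta y}\tilde B(y).
$$
Thus
$$
\norm{F}_{L^2(E)}^2=\int_0^1\norm{M\,u(y)}^2\dd y,
\qquad
u(y)=\bigl(A(y),\e^{2\pi i\delta y}\tilde B(y)\bigr)^T,
\qquad
M=\begin{pmatrix}1&1\\1&\e^{2\pi i\alpha\delta}\end{pmatrix}.
$$
The matrix $M$ does not depend on $y$. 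The map $(a,b)\mapsto u$ is unitary from $\ell^2\oplus\ell^2$ onto $L^2([0,1];\C^2)$, because multiplication by $\e^{2\pi i\delta y}$ is unitary and Parseval holds for each component.

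The next step is the spectrum of $M^*M$. Write $\theta=2\pi\alpha\delta$. Then
$$
M^*M=\begin{pmatrix}2&1+\e^{i\theta}\\1+\e^{-i\theta}&2\end{pmatrix},
$$
with eigenvalues $2\pm|1+\e^{i\theta}|=2\pm2|\cos(\theta/2)|=2\bigl(1\pm|\cos(\pi\alpha\delta)|\bigr)$. Since $M^*M$ is constant, the pointwise inequality $\lambda_{\min}|u|^2\le|Mu|^2\le\lambda_{\max}|u|^2$ integrates to the bounds \eqref{eq:integer-bounds}, using $\int_0^1|u|^2=\norm a_2^2+\norm b_2^2$. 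For optimality, take $u(y)=\phi(y)v$ with $v$ a unit eigenvector and $\phi$ a trigonometric polynomial. Then $\e^{-2\pi i\delta y}\phi(y)v_2$ is a general $L^2$ function, so it is only approximable by trigonometric polynomials. Density of finitely supported sequences together with boundedness handles this, or one can simply use surjectivity of the unitary map, as in Theorem~\ref{thm:v3-constant-binary}.

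For the basis criterion, combine the decomposition $L^2(E)\cong L^2([0,1];\C^2)$, $f\mapsto(f|_{[0,1]},f(\cdot+\alpha)|_{[0,1]})$, with the map above. The synthesis operator then becomes the unitary map followed by multiplication by the constant matrix $M$. It is therefore invertible if and only if $M$ is invertible. Since $\det M=\e^{i\theta}-1$, this happens exactly when $\alpha\delta\notin\Z$. If $\alpha\delta\in\Z$, a nonzero vector in $\ker M$ pulls back to a nonzero kernel element of synthesis, so the family is not a Riesz basis. Equivalently, the lower bound $2(1-|\cos\pi\alpha\delta|)$ vanishes.

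The main point needing care is the identification of the unitary map onto $L^2([0,1];\C^2)$. One has to check that the twist by $\e^{2\pi i\delta y}$ on the second channel is harmless. It is, because the same factor appears on both copies of the interval once the integer translation is removed. Everything else is a $2\times2$ eigenvalue computation.
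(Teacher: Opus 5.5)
Your proof is correct and follows essentially the same route as the paper. You fold the integer translate back onto $[0,1]$ to get the constant matrix $\begin{pmatrix}1&1\\1&\e^{2\pi i\alpha\delta}\end{pmatrix}$, read off the Gram eigenvalues $2\pm2|\cos(\pi\alpha\delta)|$, prove optimality with eigenvector-direction test functions plus density, and decide the basis property via $\det M=\e^{2\pi i\alpha\delta}-1$ together with a kernel vector in the singular case, exactly as the paper does.
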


\begin{proof}
Since $\alpha\in\Z$,
$$A(x+\alpha)=\sum_n a_n\e^{2\pi i n(x+\alpha)}=\sum_n a_n\e^{2\pi i n x}\e^{2\pi i n\alpha}=\sum_n a_n\e^{2\pi i n x}=A(x),$$
because $n\alpha\in\Z$. Similarly,
$$B_\delta(x+\alpha)=\sum_n b_n\e^{2\pi i(n+\delta)(x+\alpha)}=\sum_n b_n\e^{2\pi i(n+\delta)x}
\e^{2\pi i n\alpha}\e^{2\pi i\alpha\delta}=\e^{2\pi i\alpha\delta}B_\delta(x).$$
Let $\vartheta=2\pi\alpha\delta$. For each $x\in[0,1]$, the two values of $F$ on the two copies of the unit interval are
\begin{align*}
F(x)&=A(x)+B_\delta(x),\\
F(x+\alpha)&=A(x)+\e^{i\vartheta}B_\delta(x).
\end{align*}
Writing these equations as a vector identity gives
\begin{equation}\label{eq:M-vartheta-representation}
\begin{pmatrix}
F(x)\\[1mm]
F(x+\alpha)
\end{pmatrix}
=
\underbrace{
\begin{pmatrix}
1&1\\
1&\e^{i\vartheta}
\end{pmatrix}}_{M_\vartheta}
\begin{pmatrix}
A(x)\\[1mm]
B_\delta(x)
\end{pmatrix}.
\end{equation}
The first column $(1,1)^T$ records how $A$ appears on the two intervals, while the second column $(1,\e^{i\vartheta})^T$ records how $B_\delta$ appears.

Define
$$
z(x)=\begin{pmatrix}A(x)\\B_\delta(x)\end{pmatrix}.
$$
Since the intervals are disjoint up to endpoints,
\begin{align}
\norm{F}_{L^2(E)}^2
&=\int_0^1|F(x)|^2\dd x
+\int_\alpha^{\alpha+1}|F(y)|^2\dd y=\int_0^1|F(x)|^2\dd x
+\int_0^1|F(x+\alpha)|^2\dd x\\
&=\int_0^1\bigl(|F(x)|^2+|F(x+\alpha)|^2\bigr)\dd x=\int_0^1\norm{M_\vartheta z(x)}_{\C^2}^2\dd x.
\label{eq:integer-fiber-norm}
\end{align}
For any complex matrix $M$ and vector $z$,
\begin{equation}\label{eq:matrix-norm-gram}
\norm{Mz}_{\C^2}^2=(Mz)^*(Mz)=z^*M^*Mz.
\end{equation}
Thus the fiber Gram matrix is $H_\vartheta=M_\vartheta^*M_\vartheta$. Computing it entry by entry,
$$M_\vartheta^*
=\begin{pmatrix}
1&1\\
1&\e^{-i\vartheta}
\end{pmatrix},\quad 
H_\vartheta
=\begin{pmatrix}
1&1\\
1&\e^{-i\vartheta}
\end{pmatrix}
\begin{pmatrix}
1&1\\
1&\e^{i\vartheta}
\end{pmatrix}=\begin{pmatrix}
2&1+\e^{i\vartheta}\\
1+\e^{-i\vartheta}&2
\end{pmatrix}.$$
Consequently,
\begin{equation}\label{eq:integer-gram-integral}
\norm{F}_{L^2(E)}^2
=\int_0^1z(x)^*H_\vartheta z(x)\dd x.
\end{equation}
To find the best pointwise constants, compute the eigenvalues of $H_\vartheta$. They are the roots of
\begin{equation}
0=\det(H_\vartheta-\lambda I)=\det\begin{pmatrix}
2-\lambda&1+\e^{i\vartheta}\\
1+\e^{-i\vartheta}&2-\lambda
\end{pmatrix}=(2-\lambda)^2-(1+\e^{i\vartheta})(1+\e^{-i\vartheta}).
\label{eq:integer-characteristic}
\end{equation}
The product in the second term is
$$(1+\e^{i\vartheta})(1+\e^{-i\vartheta})=1+\e^{i\vartheta}+\e^{-i\vartheta}+1=2+2\cos\vartheta=4\cos^2\left(\frac\vartheta2\right).$$
Therefore
$$
(2-\lambda)^2=4\cos^2\left(\frac\vartheta2\right),
$$
and the two eigenvalues are
\begin{equation}
\lambda_-=2-2\left|\cos\left(\frac\vartheta2\right)\right|,\quad
\lambda_+=2+2\left|\cos\left(\frac\vartheta2\right)\right|.
\label{eq:integer-eigenvalues-vartheta}
\end{equation}
Since $\vartheta/2=\pi\alpha\delta$,
\begin{equation}\label{eq:integer-eigenvalues}
\lambda_\pm=2\pm2|\cos(\pi\alpha\delta)|.
\end{equation}
For every $z\in\C^2$, the spectral theorem gives
\begin{equation}\label{eq:integer-pointwise-spectral}
\lambda_-\norm{z}_{\C^2}^2
\leq z^*H_\vartheta z
\leq\lambda_+\norm{z}_{\C^2}^2.
\end{equation}
Applying \eqref{eq:integer-pointwise-spectral} to $z=z(x)$ and integrating,
\begin{equation}
\lambda_-\int_0^1\bigl(|A(x)|^2+|B_\delta(x)|^2\bigr)\dd x
\leq\norm{F}_{L^2(E)}^2\leq\lambda_+\int_0^1\bigl(|A(x)|^2+|B_\delta(x)|^2\bigr)\dd x.
\label{eq:integer-integrated-spectral}
\end{equation}
Parseval gives
\begin{equation}\label{eq:A-parseval}
\int_0^1|A(x)|^2\dd x=\sum_n|a_n|^2=\norm{a}_2^2,
\end{equation}
and
\begin{equation}
\int_0^1|B_\delta(x)|^2\dd x=\int_0^1\left|\e^{2\pi i\delta x}\sum_n b_n\e^{2\pi i n x}\right|^2\dd x=\int_0^1\left|\sum_n b_n\e^{2\pi i n x}\right|^2\dd x=\sum_n|b_n|^2=\norm{b}_2^2.
\label{eq:Bdelta-parseval}
\end{equation}
Substituting \eqref{eq:integer-eigenvalues}, \eqref{eq:A-parseval}, and \eqref{eq:Bdelta-parseval} into \eqref{eq:integer-integrated-spectral} gives \eqref{eq:integer-bounds}.

The constants are optimal because $A$ and $B_\delta$ can be chosen independently as arbitrary $L^2(0,1)$ functions: multiplication by $\e^{2\pi i\delta x}$ is unitary, and both ordinary exponential systems are orthonormal bases of $L^2(0,1)$. Thus $z(x)$ may be chosen in an eigenvector direction of $H_\vartheta$. If one first restricts to finite sequences, the corresponding functions can be approximated in $L^2$ by finite Fourier sums.

It remains to characterize the basis property. The determinant of the coefficient matrix is
\begin{equation}\label{eq:integer-det}
\det M_\vartheta=\e^{i\vartheta}-1.
\end{equation}
Thus
\begin{align}
M_\vartheta\text{ is invertible}
&\Longleftrightarrow\e^{i\vartheta}\neq1\\
&\Longleftrightarrow\vartheta\notin2\pi\Z\\
&\Longleftrightarrow\alpha\delta\notin\Z.
\label{eq:integer-invertible}
\end{align}
This is equivalent to $\cos(\pi\alpha\delta)\neq\pm1$.

Assume first that $\alpha\delta\notin\Z$. Given an arbitrary $g\in L^2(E)$, define
\begin{equation}\label{eq:integer-unfold}
y(x)=\begin{pmatrix}g(x)\\g(x+\alpha)\end{pmatrix},
\qquad 0\leq x<1.
\end{equation}
Then
\begin{equation}\label{eq:integer-inverse-fiber}
z(x)=M_\vartheta^{-1}y(x)
\end{equation}
belongs to $L^2(0,1;\C^2)$. Write $z=(A,B_\delta)^T$, define
$$
B(x)=\e^{-2\pi i\delta x}B_\delta(x),
$$
and take the ordinary Fourier coefficients of $A$ and $B$. This produces $a,b\in\ell^2(\Z)$ whose synthesis equals $g$. Hence the synthesis operator is onto; together with \eqref{eq:integer-bounds}, this proves the Riesz-basis assertion.

Conversely, suppose $\alpha\delta\in\Z$. Then $\e^{i\vartheta}=1$ and the two columns of $M_\vartheta$ coincide. Choose any nonzero $h\in L^2(0,1)$ and set
\begin{equation}\label{eq:integer-null-choice}
A=h,
\qquad
B_\delta=-h.
\end{equation}
Define $B(x)=\e^{-2\pi i\delta x}B_\delta(x)$ and let $a,b\in\ell^2(\Z)$ be the Fourier coefficient sequences of $A$ and $B$. Then
$$
F(x)=A(x)+B_\delta(x)=0
$$
on the first interval, while on the second interval
$$
F(x+\alpha)=A(x)+\e^{i\vartheta}B_\delta(x)=h(x)-h(x)=0.
$$
The coefficient pair is nonzero. Finite truncations of the coefficient sequences approximate this nonzero kernel vector and show that no positive lower Riesz bound can hold. This proves the converse in \eqref{eq:integer-basis-criterion}.
\end{proof}

\subsection{Non-integer translation}\label{sec:noninteger-twointerval}

Exponential bases on disconnected intervals have been constructed by several methods, including perturbations of integer frequencies subject to conditions depending on the gaps between the intervals; see \cite{AsipchukDrezels2023}. The related question of preserving exponential bases under modifications of the interval positions has been studied in \cite{Asipchuk2026}. Our problem is different in that the frequency set is fixed to the union of two complete shifted lattices, $\mathbb Z\cup(\mathbb Z+\delta)$, while the relative position of the two intervals is allowed to vary. We seek necessary and sufficient conditions for the Riesz-basis property, together with the optimal bounds. For a non-integer translation, folding modulo one produces two distinct matrix fibers, and stability requires both of them to be invertible.

Let $0<\delta<1$, let $\alpha\in\R\setminus\Z$, and assume that $[0,1]$ and $[\alpha,\alpha+1]$ are disjoint. Write
\begin{equation}\label{eq:alpha-q-beta}
\alpha=q+\beta,
\qquad q=\lfloor\alpha\rfloor,
\qquad 0<\beta<1,
\end{equation}
and set $E=[0,1]\cup[\alpha,\alpha+1]$. For finitely supported $a=(a_n)$ and $b=(b_n)$ define
\begin{equation}\label{eq:F-noninteger}
F(x)=\sum_{n\in\Z}a_n\e^{2\pi i n x}
+\sum_{n\in\Z}b_n\e^{2\pi i(n+\delta)x}.
\end{equation}
Let
\begin{equation}\label{eq:c-alpha-delta}
c_{\alpha,\delta}
=2\max\bigl\{|\cos(\pi q\delta)|,
|\cos(\pi(q+1)\delta)|\bigr\}.
\end{equation}

\begin{theorem}[Exact bounds and the non-integer criterion]\label{thm:noninteger-twointerval}
For every finitely supported $a,b\in\ell^2(\Z)$,
\begin{equation}
(2-c_{\alpha,\delta})\bigl(\norm{a}_2^2+\norm{b}_2^2\bigr)\leq\norm{F}_{L^2(E)}^2\leq(2+c_{\alpha,\delta})\bigl(\norm{a}_2^2+\norm{b}_2^2\bigr).
\label{eq:noninteger-bounds}
\end{equation}
The constants are optimal. Moreover,
\begin{align}
&\{\e^{2\pi i n x}\}_{n\in\Z}
\cup\{\e^{2\pi i(n+\delta)x}\}_{n\in\Z}
\text{ is a Riesz basis of }L^2(E)\notag\\
&\qquad\Longleftrightarrow
q\delta\notin\Z
\quad\text{and}\quad
(q+1)\delta\notin\Z.
\label{eq:noninteger-basis-criterion}
\end{align}
\end{theorem}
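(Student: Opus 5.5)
We fold both intervals modulo one onto $[0,1)$ and reduce the problem to two constant $2\times2$ matrix fibers, one on each of two cells. Write $A(x)=\sum_n a_n\e^{2\pi inx}$ and $B_\delta(x)=\sum_n b_n\e^{2\pi i(n+\delta)x}$, as in the proof of Theorem~\ref{thm:integer-twointerval}.

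For $A$ we have $A(x+m)=A(x)$ for every integer $m$. For $B_\delta$ we have $B_\delta(x+m)=\e^{2\pi im\delta}B_\delta(x)$. A point $y\in[\alpha,\alpha+1]$ reduces modulo one to $x\in[0,1)$, and the integer part depends on the cell:
\begin{itemize}
\item if $x\in[\beta,1)$, then $y=x+q$;
\item if $x\in[0,\beta)$, then $y=x+q+1$.
\end{itemize}
The first interval $[0,1]$ always contributes the fiber value $F(x)$. Hence
$$
\norm{F}_{L^2(E)}^2=\int_{\beta}^{1}\norm{M_{\vartheta_q}z(x)}^2\dd x+\int_0^{\beta}\norm{M_{\vartheta_{q+1}}z(x)}^2\dd x,
$$
where $z=(A,B_\delta)^T$, $\vartheta_k=2\pi k\delta$, and $M_\vartheta$ is the matrix in \eqref{eq:M-vartheta-representation}. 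The disjointness hypothesis ensures the two unfolded pieces do not overlap, so this is an exact identity.

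By the eigenvalue computation \eqref{eq:integer-eigenvalues-vartheta}:
\begin{itemize}
\item on the cell $[\beta,1)$ the pointwise Gram eigenvalues are $2\pm2|\cos(\pi q\delta)|$;
\item on the cell $[0,\beta)$ they are $2\pm2|\cos(\pi(q+1)\delta)|$.
\end{itemize}
Bounding each cell by the worse of the two gives the pointwise estimate $(2-c_{\alpha,\delta})\norm{z(x)}^2\le\cdots\le(2+c_{\alpha,\delta})\norm{z(x)}^2$ for almost every $x$. Integrating and applying Parseval as in \eqref{eq:A-parseval}--\eqref{eq:Bdelta-parseval} yields \eqref{eq:noninteger-bounds}.

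For optimality, $A$ and $B_\delta$ are arbitrary independent $L^2(0,1)$ functions. We take $z(x)$ supported on the cell whose cosine achieves the maximum in $c_{\alpha,\delta}$, pointing in the extremal eigenvector direction there. Both cells have positive measure because $0<\beta<1$, so this choice is admissible. Approximating by finite Fourier sums gives Rayleigh quotients arbitrarily close to $2\pm c_{\alpha,\delta}$.

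For the basis criterion, $\det M_{\vartheta_k}=\e^{i\vartheta_k}-1$, which vanishes exactly when $k\delta\in\Z$.
\begin{itemize}
\item \emph{Sufficiency.} If $q\delta\notin\Z$ and $(q+1)\delta\notin\Z$, unfold $g\in L^2(E)$ on each cell, i.e. take $y(x)=(g(x),g(x+q))$ on $[\beta,1)$ and $(g(x),g(x+q+1))$ on $[0,\beta)$. Invert the corresponding constant matrix, and glue the results into $z\in L^2(0,1;\C^2)$. Demodulating $B_\delta$ and taking Fourier coefficients produces a preimage of $g$. Surjectivity together with the lower bound gives a Riesz basis.
\item \emph{Necessity.} If, say, $q\delta\in\Z$, take $A=h$ and $B_\delta=-h$ with $0\neq h$ supported in $[\beta,1)$. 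Then $F$ vanishes on both unfolded pieces, giving a nonzero kernel vector; the symmetric choice with $h$ supported in $[0,\beta)$ handles $(q+1)\delta\in\Z$. Equivalently, the lower constant $2-c_{\alpha,\delta}$ is then $0$, which by optimality rules out a positive lower Riesz bound.
\end{itemize}

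The main point needing care is the bookkeeping of the cell decomposition: checking that $x+q$ versus $x+q+1$ lands in $[\alpha,\alpha+1]$ on exactly the cells claimed, with endpoint sets of measure zero. It also matters that each cell has positive measure, so that both fiber matrices genuinely enter both the optimal constants and the criterion.
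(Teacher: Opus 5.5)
Your proposal is correct and follows the paper's proof essentially step by step. It uses the same splitting of $[\alpha,\alpha+1]$ at $q+1$ into the cells $[\beta,1)$ and $[0,\beta)$, carrying the matrices $M_q$ and $M_{q+1}$, the same cellwise eigenvalues $2\pm2|\cos(\pi k\delta)|$ with localization to a positive-measure cell for optimality, and the same unfolding and kernel-vector arguments for the basis criterion.
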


\begin{proof}
Write
\begin{equation}\label{eq:ABC-noninteger}
A(x)=\sum_n a_n\e^{2\pi i n x},
\qquad
B(x)=\sum_n b_n\e^{2\pi i n x},
\qquad
C(x)=\e^{2\pi i\delta x}B(x).
\end{equation}
Then
\begin{equation}\label{eq:F-A-C}
F(x)=A(x)+C(x).
\end{equation}
Both $A$ and $B$ are one-periodic.

The second interval is
$$
[\alpha,\alpha+1]=[q+\beta,q+\beta+1].
$$
Split it as
\begin{equation}\label{eq:split-second-interval}
[q+\beta,q+1)\cup[q+1,q+\beta+1].
\end{equation}
The first piece is parameterized by $y=x+q$, with $\beta\leq x<1$, and the second by $y=x+q+1$, with $0\leq x<\beta$. Hence
\begin{equation}\label{eq:second-integral-split}
\int_\alpha^{\alpha+1}|F(y)|^2\dd y
=\int_\beta^1|F(x+q)|^2\dd x
+\int_0^\beta|F(x+q+1)|^2\dd x.
\end{equation}
For an arbitrary integer $k$, periodicity gives
$$A(x+k)=A(x),\quad
C(x+k)=\e^{2\pi i\delta(x+k)}B(x+k)=\e^{2\pi i k\delta}\e^{2\pi i\delta x}B(x)=\e^{2\pi i k\delta}C(x).$$
Therefore
\begin{align}
F(x+q+1)
&=A(x)+\e^{2\pi i(q+1)\delta}C(x),
&&0\leq x<\beta,
\label{eq:F-qplus1}\\
F(x+q)
&=A(x)+\e^{2\pi i q\delta}C(x),
&&\beta\leq x<1.
\label{eq:F-q}
\end{align}
Combining \eqref{eq:F-A-C}, \eqref{eq:F-qplus1}, and \eqref{eq:F-q}, define
\begin{equation}\label{eq:M-k}
M_k=\begin{pmatrix}
1&1\\
1&\e^{2\pi i k\delta}
\end{pmatrix}.
\end{equation}
Then
\begin{equation}\label{eq:piece-matrix-qplus1}
\begin{pmatrix}
F(x)\\F(x+q+1)
\end{pmatrix}
=M_{q+1}\begin{pmatrix}A(x)\\C(x)\end{pmatrix},
\qquad 0\leq x<\beta,
\end{equation}
and
\begin{equation}\label{eq:piece-matrix-q}
\begin{pmatrix}
F(x)\\F(x+q)
\end{pmatrix}
=M_q\begin{pmatrix}A(x)\\C(x)\end{pmatrix},
\qquad \beta\leq x<1.
\end{equation}
Thus, with $z(x)=(A(x),C(x))^T$,
\begin{equation}\label{eq:noninteger-fiber-norm}
\norm{F}_{L^2(E)}^2
=\int_0^\beta\norm{M_{q+1}z(x)}_{\C^2}^2\dd x
+\int_\beta^1\norm{M_qz(x)}_{\C^2}^2\dd x.
\end{equation}

For each integer $k$, the Gram matrix is
\begin{equation}\label{eq:H-k}
H_k=M_k^*M_k
=\begin{pmatrix}
2&1+\e^{2\pi i k\delta}\\
1+\e^{-2\pi i k\delta}&2
\end{pmatrix}.
\end{equation}
Repeating the determinant calculation in \eqref{eq:integer-characteristic}--\eqref{eq:integer-eigenvalues-vartheta}, now with $\vartheta=2\pi k\delta$, gives
\begin{equation}\label{eq:H-k-eigenvalues}
\lambda_\pm(H_k)=2\pm2|\cos(\pi k\delta)|.
\end{equation}
Consequently, on $[0,\beta)$,
\begin{equation}
\bigl(2-2|\cos(\pi(q+1)\delta)|\bigr)\norm{z(x)}^2
\leq\norm{M_{q+1}z(x)}^2\leq\bigl(2+2|\cos(\pi(q+1)\delta)|\bigr)\norm{z(x)}^2,
\label{eq:qplus1-pointwise}
\end{equation}
whereas on $[\beta,1)$,
\begin{equation}
\bigl(2-2|\cos(\pi q\delta)|\bigr)\norm{z(x)}^2\leq\norm{M_qz(x)}^2\leq\bigl(2+2|\cos(\pi q\delta)|\bigr)\norm{z(x)}^2.
\label{eq:q-pointwise}
\end{equation}
The global lower constant is the smaller of the two lower constants:
\begin{align*}
&\min\bigl\{2-2|\cos(\pi q\delta)|,
2-2|\cos(\pi(q+1)\delta)|\bigr\}\\
&\qquad=2-2\max\bigl\{|\cos(\pi q\delta)|,
|\cos(\pi(q+1)\delta)|\bigr\}=2-c_{\alpha,\delta}.
\end{align*}
Similarly, the global upper constant is
\begin{align*}
&\max\bigl\{2+2|\cos(\pi q\delta)|,
2+2|\cos(\pi(q+1)\delta)|\bigr\}\\
&\qquad=2+2\max\bigl\{|\cos(\pi q\delta)|,
|\cos(\pi(q+1)\delta)|\bigr\}=2+c_{\alpha,\delta}.
\end{align*}
Integrating \eqref{eq:qplus1-pointwise}--\eqref{eq:q-pointwise} and using
$$\int_0^1\norm{z(x)}_{\C^2}^2\dd x=\int_0^1\bigl(|A(x)|^2+|C(x)|^2\bigr)\dd x=\norm{a}_2^2+\norm{b}_2^2$$
gives \eqref{eq:noninteger-bounds}.

The constants do not depend on $\beta$. Even if one of the two pieces has small but positive length, an $L^2$ vector function may be supported entirely on that piece, and the matrix on that piece can determine the global lower or upper bound. Choosing an eigenvector direction on the relevant piece, followed by finite Fourier approximation, also proves optimality.

We now prove the basis criterion. First suppose
\begin{equation}\label{eq:noninteger-condition}
q\delta\notin\Z
\qquad\text{and}\qquad
(q+1)\delta\notin\Z.
\end{equation}
Then both matrices $M_q$ and $M_{q+1}$ are invertible. The piecewise matrix
\begin{equation}\label{eq:piecewise-M}
M(x)=
\begin{cases}
M_{q+1},&0\leq x<\beta,\\
M_q,&\beta\leq x<1
\end{cases}
\end{equation}
has an essentially bounded inverse because it assumes only two invertible matrix values. Given any $g\in L^2(E)$, unfold it into
\begin{equation}\label{eq:noninteger-unfold}
y(x)=
\begin{cases}
(g(x),g(x+q+1))^T,&0\leq x<\beta,\\
(g(x),g(x+q))^T,&\beta\leq x<1.
\end{cases}
\end{equation}
Set $z(x)=M(x)^{-1}y(x)$. Then $z=(A,C)^T\in L^2(0,1;\C^2)$. Defining $B(x)=\e^{-2\pi i\delta x}C(x)
$ and taking the Fourier coefficients of $A$ and $B$ gives sequences $a,b\in\ell^2(\Z)$ whose synthesis equals $g$. Thus the synthesis map is onto, and \eqref{eq:noninteger-bounds} gives the two-sided estimate.

Conversely, suppose first that $q\delta\in\Z$. Then $\e^{2\pi i q\delta}=1$, and the matrix on $[\beta,1)$ becomes
\begin{equation}\label{eq:Mq-singular}
M_q=\begin{pmatrix}1&1\\1&1\end{pmatrix}.
\end{equation}
This matrix has the nonzero null vector $(1,-1)^T$. Choose a nonzero function
\begin{equation}\label{eq:h-support-q}
h\in L^2(0,1),
\qquad \operatorname{supp}h\subseteq[\beta,1),
\end{equation}
and set
\begin{equation}\label{eq:A-C-null}
A(x)=h(x),
\qquad
C(x)=-h(x).
\end{equation}
On $[\beta,1)$,
\begin{align}
F(x)&=A(x)+C(x)=h(x)-h(x)=0,\notag\\
F(x+q)&=A(x)+\e^{2\pi i q\delta}C(x)
=h(x)-h(x)=0.
\label{eq:null-on-q-piece}
\end{align}
On $[0,\beta)$, both $A$ and $C$ vanish by the support condition, so the two corresponding values of $F$ also vanish. Thus $F=0$ almost everywhere on $E$.

To translate this construction into coefficient sequences, define again $B(x)=\e^{-2\pi i\delta x}C(x)$.
Since multiplication by $\e^{-2\pi i\delta x}$ is unitary on $L^2(0,1)$, both $A$ and $B$ belong to $L^2(0,1)$. Let
\begin{align}
a_n&=\int_0^1A(x)\e^{-2\pi i n x}\dd x,\notag\\
b_n&=\int_0^1B(x)\e^{-2\pi i n x}\dd x.
\label{eq:null-coefficients}
\end{align}
Parseval gives $a,b\in\ell^2(\Z)$, and they are not both zero because $h\neq0$. Hence the synthesis operator has a nontrivial kernel. Finite truncations again show that no positive lower Riesz bound can hold.

If instead $(q+1)\delta\in\Z$, use exactly the same construction with a nonzero $h$ supported in $[0,\beta)$. On that interval, $M_{q+1}$ has identical columns, and the choice $A=h$, $C=-h$ again produces a nonzero coefficient pair whose synthesis is zero. This completes the converse and proves \eqref{eq:noninteger-basis-criterion}.
\end{proof}

\section{Matrix fiberization on finite unions of intervals}
\label{sec:finite-unions}

The two-interval calculations of the preceding section are instances of a more general folding principle. We first extend this mechanism to finitely many translated unit intervals and then to arbitrary finite unions of bounded intervals, where the target fiber dimension may vary from cell to cell.

The fiberization of exponential systems over lattice multi-tiles and sub-multi-tiles is well established. In particular, \cite{AgoraAntezanaCabrelli2015} obtained Riesz-basis results for lattice multi-tiles using finite-dimensional fiber matrices, together with existence and converse multi-tiling theorems. \cite{BarbieriEtAl2018} subsequently treated exponential frames on sub-multi-tiles and gave exact frame criteria and optimal bounds in terms of possibly rectangular fiber matrices. Related explicit Fourier-matrix criteria for unions of translated cubes were also studied in \cite{AsipchukDeCarliLi2025}.

Our purpose in this section is not to introduce a new general fiberization principle, but to develop its explicit finite-cell realization for finite unions of intervals. In this geometry, the fiber changes are determined directly by the integer and fractional parts of the interval endpoints. This permits a concrete description of the local phase matrices, their exact spectral bounds, and their dependence on endpoint configurations.

We retain the proofs of the rank and spectral criteria in this specialized setting in order to make the subsequent calculations and applications self-contained. The resulting framework also provides the notation used for the arithmetic fiber-exchange arguments and the commensurate-lattice reduction below.

\subsection{Finitely many translated unit intervals}
\label{subsec:v3-unit-multiinterval}

Let
\begin{equation}\label{eq:v3-unit-E}
E=\bigcup_{j=1}^J[\alpha_j,\alpha_j+1),
\end{equation}
where the intervals are pairwise disjoint up to endpoints, and write
\begin{equation}\label{eq:v3-unit-alpha}
\alpha_j=q_j+\beta_j,
\qquad
q_j\in\Z,
\qquad
0\le\beta_j<1.
\end{equation}
Let $\delta_1,\ldots,\delta_K$ be pairwise incongruent modulo $\Z$, and define
\begin{equation}\label{eq:v3-unit-Lambda}
\Lambda=\bigcup_{k=1}^K(\Z+\delta_k).
\end{equation}
For coefficient sequences $a^{(k)}=(a_n^{(k)})_{n\in\Z}$, set
\begin{equation}\label{eq:v3-unit-synthesis}
(\mathcal Sa)(y)
=
\sum_{k=1}^K\sum_{n\in\Z}
a_n^{(k)}\e^{2\pi i(n+\delta_k)y}.
\end{equation}

For $x\in[0,1)$, the point of the $j$-th interval having fractional part $x$ is
\begin{equation}\label{eq:v3-unit-yj}
y_j(x)=x+\kappa_j(x),
\qquad
\kappa_j(x)=q_j+\mathbf1_{[0,\beta_j)}(x).
\end{equation}
For each frequency channel put
\begin{equation}\label{eq:v3-unit-Ak}
B_k(x)=\sum_{n\in\Z}a_n^{(k)}\e^{2\pi inx},
\qquad
A_k(x)=\e^{2\pi i\delta_kx}B_k(x),
\end{equation}
and define the $J\times K$ matrix field
\begin{equation}\label{eq:v3-unit-Mx}
M(x)
=
\left(
\e^{2\pi i\delta_k\kappa_j(x)}
\right)_{
\substack{1\le j\le J\\1\le k\le K}
}.
\end{equation}

\begin{theorem}[Exact matrix-symbol fiberization for unit intervals]\label{thm:v3-unit-fiberization}
With the preceding notation,
\begin{equation}\label{eq:v3-unit-fiber-identity}
\norm{\mathcal Sa}_{L^2(E)}^2
=
\int_0^1\norm{M(x)A(x)}_{\C^J}^2\dd x,
\end{equation}
where $A=(A_1,\ldots,A_K)^T$. Moreover,
\begin{equation}\label{eq:v3-unit-coeff-identity}
\int_0^1\norm{A(x)}_{\C^K}^2\dd x
=
\sum_{k=1}^K\norm{a^{(k)}}_{\ell^2}^2.
\end{equation}
Every vector function in $L^2(0,1;\C^K)$ arises from a unique coefficient family.
\end{theorem}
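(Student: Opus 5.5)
The plan is to follow the two-interval argument of Theorem~\ref{thm:noninteger-twointerval}, now with $J$ intervals and $K$ channels. First, for each $j$, split $[\alpha_j,\alpha_j+1)$ into $[q_j+\beta_j,q_j+1)$ and $[q_j+1,q_j+1+\beta_j)$. Parameterize the first piece by $y=x+q_j$ with $x\in[\beta_j,1)$ and the second by $y=x+q_j+1$ with $x\in[0,\beta_j)$. Both cases are captured by $y_j(x)=x+\kappa_j(x)$ from \eqref{eq:v3-unit-yj}, so
$$
\int_{\alpha_j}^{\alpha_j+1}|\mathcal Sa(y)|^2\dd y=\int_0^1|\mathcal Sa(x+\kappa_j(x))|^2\dd x .
$$
Because the intervals are disjoint up to endpoints, summing over $j$ gives $\norm{\mathcal Sa}_{L^2(E)}^2=\int_0^1\sum_j|\mathcal Sa(y_j(x))|^2\dd x$.

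Next, for finitely supported coefficients, each $B_k$ is a trigonometric polynomial and hence $1$-periodic. For any integer $\kappa$,
$$
\e^{2\pi i(n+\delta_k)(x+\kappa)}=\e^{2\pi i\delta_k\kappa}\e^{2\pi i(n+\delta_k)x}.
$$
Therefore $\mathcal Sa(x+\kappa_j(x))=\sum_k \e^{2\pi i\delta_k\kappa_j(x)}A_k(x)=(M(x)A(x))_j$. Summing $|\cdot|^2$ over $j$ yields \eqref{eq:v3-unit-fiber-identity} on $\ell^2_0$. For \eqref{eq:v3-unit-coeff-identity}, note that $|A_k|=|B_k|$ pointwise, and apply Parseval on $[0,1)$ to each channel, exactly as in \eqref{eq:Bdelta-parseval}.

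For the last statement, let $A\in L^2(0,1;\C^K)$. Set $B_k=\e^{-2\pi i\delta_kx}A_k\in L^2(0,1)$ and take $a^{(k)}_n=\int_0^1B_k(x)\e^{-2\pi inx}\dd x$. Existence follows from this construction, and uniqueness from the fact that $\{\e^{2\pi inx}\}$ is an orthonormal basis of $L^2(0,1)$ and multiplication by a unimodular function is injective. So the map $a\mapsto A$ is a unitary from $\bigoplus_{k}\ell^2(\Z)$ onto $L^2(0,1;\C^K)$.

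The main point needing care is extending \eqref{eq:v3-unit-fiber-identity} from finitely supported coefficients to all of $\ell^2$. Since $M(x)$ takes only finitely many values, all with unimodular entries, we have $\norm{M(x)}\le\sqrt{JK}$ uniformly in $x$. The right-hand side is therefore a bounded quadratic form, controlled by $JK\sum_k\norm{a^{(k)}}^2$. The identity on the dense subspace then gives boundedness of $\mathcal S$, and both sides extend continuously to all of $\ell^2$, where the identity persists by density. Measurability of $\kappa_j$ is immediate, since it is piecewise constant with a single jump at $\beta_j$.
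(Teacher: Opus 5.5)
Your proposal is correct and follows the paper's proof essentially step by step. You fold each interval via $y_j(x)=x+\kappa_j(x)$, use integer periodicity to extract the phases $\e^{2\pi i\delta_k\kappa_j(x)}$, apply Parseval channelwise, and invert by demodulating and taking Fourier coefficients; your explicit density extension via the uniform bound $\norm{M(x)}\le\sqrt{JK}$ supplies a step the paper leaves implicit here and only states later, in the variable-fiber folding proposition.
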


\begin{proof}
At $y_j(x)=x+\kappa_j(x)$, periodicity of $B_k$ gives
\begin{align*}
(\mathcal Sa)(y_j(x))
&=
\sum_{k=1}^K
\e^{2\pi i\delta_k(x+\kappa_j(x))}
B_k(x+\kappa_j(x))\\
&=
\sum_{k=1}^K
\e^{2\pi i\delta_k\kappa_j(x)}A_k(x).
\end{align*}
Thus the vector of folded values is $M(x)A(x)$. Changing variables on each disjoint interval gives \eqref{eq:v3-unit-fiber-identity}. Parseval and $|\e^{2\pi i\delta_kx}|=1$ give \eqref{eq:v3-unit-coeff-identity}.

Conversely, given $A_k\in L^2(0,1)$, define
$$
B_k(x)=\e^{-2\pi i\delta_kx}A_k(x)
$$
and take its ordinary Fourier coefficients. This produces the unique sequence $a^{(k)}\in\ell^2(\Z)$.
\end{proof}

The dependence of $M(x)$ on $x$ is extremely simple. List the distinct points of
$$
\{0,1,\beta_1,\ldots,\beta_J\}
$$
as
$$
0=\gamma_0<\gamma_1<\cdots<\gamma_R=1
$$
and put
$$
I_r=(\gamma_{r-1},\gamma_r),
\qquad 1\le r\le R.
$$
Each $\kappa_j$ is constant on every $I_r$, and hence
\begin{equation}\label{eq:v3-unit-Mr}
M(x)=M_r,
\qquad x\in I_r,
\end{equation}
for finitely many matrices $M_r\in\C^{J\times K}$.

\begin{lemma}[Uniform bounds from finitely many matrices]\label{lem:v3-finite-cell}
Let
$$
\mathcal M:L^2(0,1;\C^K)\longrightarrow L^2(0,1;\C^J),
\qquad
(\mathcal MF)(x)=M(x)F(x),
$$
where $M(x)=M_r$ on the positive-length cells $I_r$. Then the optimal squared lower and upper synthesis constants are
\begin{equation}\label{eq:v3-cell-synthesis-bounds}
A_E=\min_r\lambda_{\min}(M_r^*M_r),
\qquad
B_E=\max_r\lambda_{\max}(M_r^*M_r).
\end{equation}
The operator $\mathcal M$ is bounded below if and only if every $M_r$ has full column rank $K$.

The optimal squared lower constant for the adjoint is
\begin{equation}\label{eq:v3-cell-adjoint-lower}
A_{\mathrm{fr}}
=
\min_r\lambda_{\min}(M_rM_r^*),
\end{equation}
and it is positive if and only if every $M_r$ has full row rank $J$. The optimal upper constant for the adjoint is again $B_E$.
\end{lemma}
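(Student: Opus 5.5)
The plan is to treat $\mathcal M$ as a multiplication operator by a matrix field that is piecewise constant on finitely many cells, and to reduce every claim to pointwise linear algebra on each cell plus a localization argument for optimality. First, for $F\in L^2(0,1;\C^K)$, write
$$
\norm{\mathcal MF}^2=\sum_r\int_{I_r}F(x)^*M_r^*M_rF(x)\dd x .
$$
The endpoints $\gamma_r$ form a null set, so they can be ignored. The spectral theorem applied to the Hermitian matrix $M_r^*M_r$ gives the pointwise inequalities
$$
\lambda_{\min}(M_r^*M_r)\norm{F(x)}^2\le\norm{M_rF(x)}^2\le\lambda_{\max}(M_r^*M_r)\norm{F(x)}^2 .
$$
Integrating and taking the minimum and maximum over $r$ yields the two inequalities with constants $A_E$ and $B_E$.

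For optimality, I would fix the cell $r_0$ at which the minimum (respectively maximum) is attained. Take a unit eigenvector $u$ of $M_{r_0}^*M_{r_0}$ for the extreme eigenvalue and set $F=|I_{r_0}|^{-1/2}\mathbf 1_{I_{r_0}}u$. Then $\norm F=1$ and $\norm{\mathcal MF}^2$ equals that eigenvalue. This is exactly where the restriction to positive-length cells is used: a cell of length zero could not carry such a test function. It follows that $\mathcal M$ is bounded below if and only if $A_E>0$. Since there are finitely many cells, this holds if and only if each $M_r^*M_r$ is positive definite, that is, if and only if each $M_r$ has trivial kernel, which is full column rank $K$.

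For the adjoint, $\mathcal M^*$ is multiplication by $M(x)^*$, again constant on cells. The same two steps apply with $M_r^*$ in place of $M_r$: the pointwise Rayleigh bounds now involve $M_rM_r^*$, and the same localized eigenvector construction shows they are sharp. Hence the optimal lower constant is $\min_r\lambda_{\min}(M_rM_r^*)$. It is positive if and only if each $M_r^*$ is injective, equivalently each $M_r$ has full row rank $J$. For the upper constant, the nonzero eigenvalues of $M_rM_r^*$ and $M_r^*M_r$ coincide, so $\lambda_{\max}(M_rM_r^*)=\lambda_{\max}(M_r^*M_r)=\norm{M_r}^2$. The adjoint upper constant is therefore again $B_E$; equivalently, $\norm{\mathcal M^*}=\norm{\mathcal M}$.

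No step is a real obstacle. The only point needing care is the sharpness claim, since the global constant is an essential infimum over $x$. The finiteness of the set of cells together with their positive length makes that essential infimum a genuine minimum, attained by the localized eigenvector construction.
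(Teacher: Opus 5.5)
Your proposal is correct and follows essentially the same route as the paper: cellwise spectral bounds for $M_r^*M_r$, optimality via the localized eigenvector $|I_{r_0}|^{-1/2}\mathbf 1_{I_{r_0}}u$ on an extremal cell, the equivalence of positive definiteness with full column rank, and the same argument applied to multiplication by $M(x)^*$. Your explicit observation that $\lambda_{\max}(M_rM_r^*)=\lambda_{\max}(M_r^*M_r)=\norm{M_r}^2$ usefully justifies the claim that the adjoint's upper constant is again $B_E$, which the paper states without comment.
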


\begin{proof}
For $F\in L^2(0,1;\C^K)$,
$$
\norm{\mathcal MF}^2
=
\sum_r\int_{I_r}
ip{M_r^*M_rF(x)}{F(x)}\dd x.
$$
The finite-dimensional spectral theorem gives the inequalities with the constants in \eqref{eq:v3-cell-synthesis-bounds}. If $r_0$ attains the minimum and $u$ is a unit eigenvector of $M_{r_0}^*M_{r_0}$ corresponding to $A_E$, then
$$
F(x)=|I_{r_0}|^{-1/2}\mathbf1_{I_{r_0}}(x)u
$$
has norm one and satisfies $\norm{\mathcal MF}^2=A_E$. The analogous construction on a maximizing cell proves optimality of $B_E$.

The matrix $M_r^*M_r$ is positive definite exactly when $M_r$ has full column rank. Since only finitely many matrices occur, positivity on every cell implies a uniform positive minimum. Applying the same argument to multiplication by $M(x)^*$ gives \eqref{eq:v3-cell-adjoint-lower} and the row-rank criterion.
\end{proof}

\begin{remark}[Why finite range matters]\label{rem:v3-finite-range}
Full rank almost everywhere does not imply a uniform lower singular-value bound for an arbitrary measurable matrix field. Already in the scalar case $M(x)=x$ is nonzero for every $x\in(0,1)$, but
$$
F_n(x)=\sqrt n\,\mathbf1_{(0,1/n)}(x)
$$
satisfies $\norm{F_n}_{L^2}=1$ and
$$
\norm{M F_n}_{L^2}^2
=
n\int_0^{1/n}x^2\dd x
=
\frac1{3n^2}\longrightarrow0.
$$
The finite-cell property is precisely what converts cellwise rank into a uniform bound in the present setting.
\end{remark}

\begin{theorem}[Rank classification for translated unit intervals]\label{thm:v3-unit-rank}
The system generated by $\Lambda$ on $E$ has the following properties.
\begin{enumerate}[label=(\roman*)]
\item It is a Riesz sequence if and only if $J\ge K$ and every positive-cell matrix $M_r$ has column rank $K$.
\item It is a frame for $L^2(E)$ if and only if $K\ge J$ and every positive-cell matrix $M_r$ has row rank $J$.
\item It is a Riesz basis of $L^2(E)$ if and only if $J=K$ and every positive-cell matrix $M_r$ is invertible.
\end{enumerate}
The optimal squared synthesis bounds are the constants in \eqref{eq:v3-cell-synthesis-bounds}. In the frame case the optimal lower frame bound is \eqref{eq:v3-cell-adjoint-lower} and the optimal upper frame bound is $B_E$. In the basis case,
\begin{equation}\label{eq:v3-unit-inverse-norm}
\operatorname*{ess\,sup}_{x\in(0,1)}\norm{M(x)^{-1}}
=
\max_r\norm{M_r^{-1}}
=A_E^{-1/2}.
\end{equation}
\end{theorem}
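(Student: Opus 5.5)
The plan is to combine the unitary identification of Theorem~\ref{thm:v3-unit-fiberization} with the cellwise spectral analysis of Lemma~\ref{lem:v3-finite-cell}. First, factor the synthesis operator $\mathcal S$ as $\mathcal S=\mathcal U_E^{-1}\mathcal M\,\Phi$. Here $\Phi:\bigoplus_{k=1}^K\ell^2(\Z)\to L^2(0,1;\C^K)$ sends the coefficient family to $A=(A_1,\dots,A_K)^T$. By \eqref{eq:v3-unit-coeff-identity} and the surjectivity statement of Theorem~\ref{thm:v3-unit-fiberization}, $\Phi$ is unitary. The map $\mathcal U_E:L^2(E)\to L^2(0,1;\C^J)$ is the folding $g\mapsto(g(y_j(x)))_{j=1}^J$. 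It is unitary because the intervals are disjoint up to endpoints and, for each $x$, each interval $[\alpha_j,\alpha_j+1)$ contains exactly one point with fractional part $x$, namely $y_j(x)$. Change of variables on each interval then shows that $\mathcal U_E$ is isometric. It is onto, since any $(g_j)\in L^2(0,1;\C^J)$ can be unfolded interval by interval. With this factorization, \eqref{eq:v3-unit-fiber-identity} is just $\norm{\mathcal Sa}=\norm{\mathcal M\Phi a}$.

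Since unitary factors preserve optimal bounds, the optimal synthesis constants of $\mathcal S$ are those of $\mathcal M$, which Lemma~\ref{lem:v3-finite-cell} gives as $A_E,B_E$. The frame operator corresponds to $\mathcal S^*=\Phi^*\mathcal M^*\mathcal U_E$, so the optimal lower frame bound is the squared lower bound of $\mathcal M^*$, namely $A_{\rm fr}$, and the upper frame bound is $B_E$. Statements (i)--(iii) then follow from the standard synthesis characterizations. A Riesz sequence means $\mathcal S$ is bounded below, which is equivalent to full column rank $K$ of every $M_r$ by the lemma; this forces $J\ge K$. A frame means $\mathcal S^*$ is bounded below, equivalently $\mathcal S$ is onto, which gives full row rank $J$ on every cell and hence $K\ge J$. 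A Riesz basis requires both, hence $J=K$ and every $M_r$ invertible. Conversely, if every $M_r$ is invertible, then $\mathcal M^{-1}$ is multiplication by $M_r^{-1}$ on $I_r$, and this is bounded because there are only finitely many cells. Only cells of positive length matter, since $M(x)$ on the finitely many endpoints is irrelevant almost everywhere.

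For \eqref{eq:v3-unit-inverse-norm}, note that $M(x)$ takes the value $M_r$ on $I_r$, so the essential supremum equals $\max_r\norm{M_r^{-1}}$. Moreover, $\norm{M_r^{-1}}^2=1/\sigma_{\min}(M_r)^2=1/\lambda_{\min}(M_r^*M_r)$. Maximizing over $r$ gives $(\min_r\lambda_{\min}(M_r^*M_r))^{-1}=A_E^{-1}$.

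The main point needing care is the necessity direction when some $M_r$ is rank-deficient. Here I would rely on the optimality part of Lemma~\ref{lem:v3-finite-cell}: a unit null vector $u$ of $M_{r_0}$ placed as $|I_{r_0}|^{-1/2}\mathbf 1_{I_{r_0}}u$ gives $\mathcal MF=0$. Pulling this back through $\Phi^{-1}$ produces a nonzero $\ell^2$ coefficient family in $\ker\mathcal S$, and finite truncations show that no lower bound holds, exactly as in Theorem~\ref{thm:noninteger-twointerval}. Row deficiency is handled the same way with $\mathcal M^*$, which yields a nonzero $g\in L^2(E)$ orthogonal to the whole system.
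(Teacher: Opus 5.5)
Your proposal is correct and follows the same route as the paper. Both arguments transport synthesis through the unitary coefficient map to multiplication by the piecewise-constant field $M(x)$, then read off the optimal constants, the column- and row-rank criteria, and the bounded inverse multiplier from Lemma~\ref{lem:v3-finite-cell} and the finitely many cell matrices. Your explicit check that the folding $\mathcal U_E$ is onto, and not merely isometric, is a useful addition: the paper uses this only implicitly when it passes to the adjoint for the frame statements.
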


\begin{proof}
Under the unitary coefficient transformation of Theorem~\ref{thm:v3-unit-fiberization}, synthesis is multiplication by $M(x)$. A Riesz sequence is equivalent to a positive lower synthesis bound and hence, by Lemma~\ref{lem:v3-finite-cell}, to full column rank on every cell. A frame is equivalent to a positive lower bound for the adjoint, hence to full row rank. A Riesz basis requires both properties and therefore forces $J=K$ and invertibility on every cell. Conversely, in the square case the finitely many inverses $M_r^{-1}$ define a bounded inverse multiplier. Formula \eqref{eq:v3-unit-inverse-norm} is immediate from
$$
\norm{M_r^{-1}}=\lambda_{\min}(M_r^*M_r)^{-1/2}.
$$
\end{proof}

\begin{proposition}[Finite-cell reduction and chamber rigidity]\label{prop:v3-unit-chambers}
For the unit-interval geometry, the matrix field assumes at most $J+1$ values. All optimal bounds and all rank criteria reduce to the positive-cell matrices $M_r$. Furthermore, if the integer parts $q_j$, the shifts $\delta_k$, and the weak ordering of the fractional parts $\beta_j$ are fixed, then the complete list of cell matrices, the optimal bounds, and the Riesz-sequence/frame/basis status remain unchanged while the $\beta_j$ vary without crossing a chamber boundary.
\end{proposition}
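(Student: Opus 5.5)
The plan is to read everything off the explicit form of the cell matrices. By \eqref{eq:v3-unit-yj} and \eqref{eq:v3-unit-Mx}, the $j$-th row of $M(x)$ is
$$
\bigl(\e^{2\pi i\delta_k(q_j+\mathbf 1_{[0,\beta_j)}(x))}\bigr)_{k=1}^K .
$$
It depends on $x$ only through the indicator $\mathbf 1_{[0,\beta_j)}(x)$. This indicator takes the value $1$ for $x<\beta_j$ and $0$ for $x\ge\beta_j$. So $M(x)$ is determined by the vector of indicators $\bigl(\mathbf 1_{\{x<\beta_j\}}\bigr)_{j=1}^J$.

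\emph{Counting cells.} As $x$ increases through $[0,1)$, each indicator switches once, from $1$ to $0$, at $x=\beta_j$; if $\beta_j=0$ it is identically $0$. The indicator vector is therefore monotone in $x$ and changes only at the distinct values among $\beta_1,\ldots,\beta_J$. These values split $[0,1)$ into at most $J+1$ intervals, so at most $J+1$ distinct matrices $M_r$ occur. The second assertion, that bounds and rank criteria reduce to the positive-cell matrices, is just Lemma~\ref{lem:v3-finite-cell} together with Theorem~\ref{thm:v3-unit-rank}. Endpoints $\gamma_r$ form a null set and do not affect any $L^2$ statement.

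\emph{Chamber rigidity.} Fix the $q_j$, the $\delta_k$, and the weak ordering of the $\beta_j$. The weak ordering includes ties and which $\beta_j$ equal $0$; I read ``chamber'' in this sense. For each positive-length cell $I_r=(\gamma_{r-1},\gamma_r)$, the set $\{j:\beta_j>x\}$ for $x\in I_r$ is $\{j:\beta_j\ge\gamma_r\}$. This is an upper set of the weak ordering, and it is determined by the ordering alone. Conversely, every upper set that arises in this way corresponds to a positive-length cell, because consecutive distinct levels are strictly separated. So the ordered list of positive-cell matrices $M_1,\ldots,M_R$ is a function of the combinatorial data only. The cell lengths $\gamma_r-\gamma_{r-1}$ do vary with the $\beta_j$, but they stay positive inside a chamber.

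The optimal constants \eqref{eq:v3-cell-synthesis-bounds} and \eqref{eq:v3-cell-adjoint-lower} are minima and maxima over the positive cells of spectral quantities of the $M_r$. They do not involve cell lengths, as in the proof of Lemma~\ref{lem:v3-finite-cell}, where a unit vector concentrated on any positive-length cell attains the extremal value. Hence they are constant on the chamber. The same holds for the rank conditions in Theorem~\ref{thm:v3-unit-rank}, so the Riesz-sequence, frame and basis status is unchanged. One hypothesis must persist: the intervals stay pairwise disjoint up to endpoints throughout the motion. I would state this explicitly as part of the admissible variation.

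\emph{Main obstacle.} The only delicate point is making ``chamber'' precise so that no positive cell is created or destroyed. This happens exactly when two previously distinct $\beta$'s merge, or when some $\beta_j$ reaches $0$. Both events change the weak ordering, so fixing the weak ordering, including the position of $0$, suffices.
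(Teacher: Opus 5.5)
Your proof is correct and follows the same route as the paper's: the rows of $M(x)$ depend on $x$ only through the indicator vector $(\mathbf 1_{[0,\beta_j)}(x))_{j}$. Hence the cell matrices depend only on the order in which the $\beta_j$ are crossed, and the extremal constants do not depend on cell lengths because test functions can be localized to a single positive-length cell. Your explicit treatment of ties and of the position of $0$ makes precise what the paper leaves implicit in ``chamber.'' The disjointness you propose to impose is in fact automatic: for fixed $q_j$, two unit intervals are disjoint up to endpoints if and only if $|q_j-q_\ell|\ge 2$, or $q_\ell=q_j+1$ and $\beta_j\le\beta_\ell$, and this condition is fixed by the weak ordering.
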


\begin{proof}
The vector
$$
\bigl(
\mathbf1_{[0,\beta_1)}(x),\ldots,
\mathbf1_{[0,\beta_J)}(x)
\bigr)
$$
changes only when $x$ crosses one of the fractional parts $\beta_j$. Thus the sequence of active subsets and hence the matrices $M_r$ depend only on the order in which the fractional parts are crossed. The lengths of the cells do not enter the extremal eigenvalues, because the optimal test functions can be localized to any positive-measure cell.
\end{proof}

\subsection{Arithmetic progressions of frequency shifts}

Suppose the shifted lattices form an arithmetic progression,
\begin{equation}\label{eq:v3-arithmetic-shifts}
\delta_k=\delta_0+(k-1)\tau,
\qquad 1\le k\le K.
\end{equation}
On a fixed cell the integers $\kappa_j$ are constant. Put
$$
z_j=\e^{2\pi i\tau\kappa_j}.
$$
Up to unitary diagonal factors, the local matrix becomes
$$
\begin{pmatrix}
1&z_1&\cdots&z_1^{K-1}\\
1&z_2&\cdots&z_2^{K-1}\\
\vdots&\vdots&&\vdots\\
1&z_J&\cdots&z_J^{K-1}
\end{pmatrix}.
$$

\begin{corollary}[Vandermonde basis criterion for unit intervals]\label{cor:v3-unit-vandermonde}
Assume $J=K$ and \eqref{eq:v3-arithmetic-shifts}. The exponential family is a Riesz basis if and only if, on every positive-measure cell,
\begin{equation}\label{eq:v3-unit-vandermonde-condition}
\tau(\kappa_j-\kappa_\ell)\notin\Z
\qquad(j\ne\ell).
\end{equation}
\end{corollary}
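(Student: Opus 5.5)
Since $J=K$, the proof goes through Theorem~\ref{thm:v3-unit-rank}(iii). That result says the family is a Riesz basis of $L^2(E)$ if and only if every positive-cell matrix $M_r$ is invertible. So it suffices to fix one positive-measure cell $I_r$ and show that $M_r$ is invertible exactly when \eqref{eq:v3-unit-vandermonde-condition} holds on that cell. On $I_r$ each $\kappa_j$ is a constant integer, so $M_r=(\e^{2\pi i\delta_k\kappa_j})_{j,k}$ is a constant $K\times K$ matrix.

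Next comes the factorization. Substituting \eqref{eq:v3-arithmetic-shifts} gives
$$
\e^{2\pi i\delta_k\kappa_j}=\e^{2\pi i\delta_0\kappa_j}\,\bigl(\e^{2\pi i\tau\kappa_j}\bigr)^{k-1}=\e^{2\pi i\delta_0\kappa_j}z_j^{k-1}.
$$
Hence $M_r=D_rW_r$, where $D_r=\operatorname{diag}(\e^{2\pi i\delta_0\kappa_j})_{j=1}^K$ is a unitary diagonal matrix and $W_r=(z_j^{k-1})_{j,k}$ is the square Vandermonde matrix displayed before the corollary. Because $D_r$ is invertible, $M_r$ is invertible if and only if $W_r$ is. The Vandermonde determinant formula gives
$$
\det W_r=\prod_{j<\ell}(z_\ell-z_j),
$$
so $W_r$ is invertible if and only if the nodes $z_j$ are pairwise distinct. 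Finally, $z_j=z_\ell$ holds if and only if $\e^{2\pi i\tau(\kappa_j-\kappa_\ell)}=1$, that is, if and only if $\tau(\kappa_j-\kappa_\ell)\in\Z$. This is precisely the negation of \eqref{eq:v3-unit-vandermonde-condition}.

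Combining the steps over all positive-measure cells, together with Theorem~\ref{thm:v3-unit-rank}(iii), gives the equivalence. Cells of zero length (repeated $\gamma$'s) are already excluded in the definition of the $I_r$, so they impose no condition.

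I do not expect a real obstacle; the argument is a direct reduction. The one point needing care is bookkeeping. The condition must be checked on every positive-measure cell separately, since the integers $\kappa_j=q_j+\mathbf1_{[0,\beta_j)}(x)$ change from cell to cell. Also, the pairwise incongruence of the $\delta_k$ assumed in \eqref{eq:v3-unit-Lambda} plays no role here; it only ensures the channels are genuinely distinct.
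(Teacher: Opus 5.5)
Your proposal is correct and follows the paper's proof. Theorem~\ref{thm:v3-unit-rank}(iii) reduces the claim to invertibility of each cell matrix $M_r$; the arithmetic-progression shifts make $M_r$ a unitary diagonal row factor times a square Vandermonde matrix in the nodes $z_j=\e^{2\pi i\tau\kappa_j}$; and the Vandermonde determinant vanishes exactly when $\tau(\kappa_j-\kappa_\ell)\in\Z$ for some $j\neq\ell$. Your explicit factorization $M_r=D_rW_r$ just spells out the ``up to unitary diagonal factors'' remark the paper makes before the corollary.
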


\begin{proof}
The determinant of the local Vandermonde factor is
$$
\prod_{1\le j<\ell\le J}(z_\ell-z_j),
$$
which is nonzero exactly when the nodes are pairwise distinct. This is equivalent to \eqref{eq:v3-unit-vandermonde-condition}. The result follows from Theorem~\ref{thm:v3-unit-rank}.
\end{proof}

For $J=K=2$, $\delta_1=0$, and $\delta_2=\delta$, this construction reproduces the constant matrix of Section~\ref{sec:integer-twointerval} and the two piecewise constant matrices of Section~\ref{sec:noninteger-twointerval}. Thus the earlier $2\times2$ formulas are exactly the two-channel specialization of the general unit-interval theory.

\subsection{Arbitrary finite unions and variable fiber dimensions}
\label{subsec:v3-unequal}

The common unit length is not essential to the folding argument. What changes for unequal lengths is the number of points of $E$ lying above a given fractional coordinate. This multiplicity must therefore be built into the target fiber.

The role of integer-translate multiplicity in exponential systems has been studied previously. \cite{DeCarliMizrahiTepper2019} characterized
frame and Riesz-sequence properties of the standard integer exponential system through the multiplicity
function
$$\Phi(x)=\sum_{m\in\mathbb Z}\mathbf{1}_E(x+m).$$
In the present setting, this multiplicity is $q(x)=|\Omega_x|$. For a single frequency lattice, the matrix formulation below reduces to the corresponding scalar fiber description. With several shifted lattices, the multiplicity function alone no longer determines stability: the ranks and singular values of the associated phase matrices must also be taken into account.

Let
\begin{equation}\label{eq:v3-general-E}
E=\bigcup_{j=1}^J[a_j,b_j),
\qquad a_j<b_j,
\end{equation}
be a finite union of bounded intervals that are pairwise disjoint up to endpoints. Fix $K\ge1$ and pairwise incongruent shifts $\delta_1,\ldots,\delta_K$ modulo $\Z$. Let
\begin{equation}\label{eq:v3-general-Lambda}
\Lambda=\bigcup_{k=1}^K(\Z+\delta_k),
\end{equation}
and define synthesis by
\begin{equation}\label{eq:v3-general-synthesis}
(Sa)(y)
=
\sum_{k=1}^K\sum_{n\in\Z}
a_n^{(k)}\e^{2\pi i(n+\delta_k)y}.
\end{equation}

For $x\in[0,1)$ define the integer fiber
\begin{equation}\label{eq:v3-Omegax}
\Omega_x=\{m\in\Z:x+m\in E\},
\qquad
q(x)=|\Omega_x|.
\end{equation}
Because $E$ is bounded, every $\Omega_x$ is contained in one fixed finite subset of $\Z$.

Partition $(0,1)$ at the distinct points
\begin{equation}\label{eq:v3-general-partition}
\{0,1\}\cup
\bigl\{\{a_j\},\{b_j\}:1\le j\le J\bigr\},
\end{equation}
where $\{t\}=t-\lfloor t\rfloor$ denotes fractional part. Let the resulting positive-length open cells be $I_1,\ldots,I_R$. Membership of $x+m$ in $E$ can change only at one of the partition points, so
\begin{equation}\label{eq:v3-Omega-cell}
\Omega_x=\Omega_r,
\qquad
q(x)=q_r:=|\Omega_r|,
\qquad
x\in I_r.
\end{equation}
Some $q_r$ may be zero.

\begin{proposition}[Folding with variable fiber dimensions]\label{prop:v3-variable-folding}
With the convention $\C^0=\{0\}$, define
\begin{equation}\label{eq:v3-HE}
\mathcal H_E
=
\bigoplus_{r=1}^R L^2(I_r;\C^{q_r}),
\end{equation}
and
\begin{equation}\label{eq:v3-W}
(Wf)_r(x)
=
\bigl(f(x+m)\bigr)_{m\in\Omega_r}.
\end{equation}
Then
$$
W:L^2(E)\longrightarrow\mathcal H_E
$$
is unitary.

Define
\begin{equation}\label{eq:v3-general-Ak}
A_k(x)
=
\e^{2\pi i\delta_kx}
\sum_{n\in\Z}a_n^{(k)}\e^{2\pi inx},
\qquad
A(x)=(A_1(x),\ldots,A_K(x))^T.
\end{equation}
The coefficient map is unitary from $\bigoplus_{k=1}^K\ell^2(\Z)$ onto $L^2(0,1;\C^K)$. Under these transformations,
\begin{equation}\label{eq:v3-variable-matrix-action}
(WSa)_r(x)=M_rA(x),
\qquad
M_r
=
\left(
\e^{2\pi i\delta_km}
\right)_{
\substack{m\in\Omega_r\\1\le k\le K}
}
\in\C^{q_r\times K}.
\end{equation}
Consequently,
\begin{equation}\label{eq:v3-variable-norm-identity}
\norm{Sa}_{L^2(E)}^2
=
\sum_{r=1}^R\int_{I_r}\norm{M_rA(x)}_{\C^{q_r}}^2\dd x,
\end{equation}
while
\begin{equation}\label{eq:v3-variable-coeff-norm}
\sum_{k=1}^K\norm{a^{(k)}}_{\ell^2}^2
=
\int_0^1\norm{A(x)}_{\C^K}^2\dd x.
\end{equation}
\end{proposition}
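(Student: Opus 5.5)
The plan mirrors the proof of Theorem~\ref{thm:v3-unit-fiberization}, replacing the fixed $J$-fold fiber by the cell-dependent fiber $\Omega_r$. I would establish three facts in order: unitarity of $W$, unitarity of the coefficient map, and the pointwise action \eqref{eq:v3-variable-matrix-action}. The two norm identities then follow at once.

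\emph{Step 1: unitarity of $W$.} Up to the null set formed by the finitely many integer translates of the partition points \eqref{eq:v3-general-partition}, the map $(x,m)\mapsto x+m$ is a bijection. Its domain is $\{(x,m):x\in I_r \text{ for some } r,\ m\in\Omega_r\}$ and its image is $E$. This holds because every $y\in E$ has a unique decomposition $y=\{y\}+\lfloor y\rfloor$, and $\lfloor y\rfloor\in\Omega_{\{y\}}$ by the definition \eqref{eq:v3-Omegax}. Translation by an integer preserves Lebesgue measure, so
$$
\norm{f}_{L^2(E)}^2=\sum_{r}\sum_{m\in\Omega_r}\int_{I_r}|f(x+m)|^2\dd x=\norm{Wf}_{\mathcal H_E}^2 .
$$
Thus $W$ is an isometry. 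For surjectivity, given $(g_r)\in\mathcal H_E$, set $f(x+m)=(g_r(x))_m$ for $x\in I_r$ and $m\in\Omega_r$. This $f$ is well defined a.e.\ on $E$ and satisfies $Wf=g$. Cells with $q_r=0$ contribute the trivial space $L^2(I_r;\C^0)=\{0\}$ and need no separate treatment. The only point requiring care is the measure-zero bookkeeping at the partition points; I expect it to be the one mildly delicate step, and the rest is routine.

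\emph{Step 2: unitarity of the coefficient map.} For each channel $k$, Parseval on $(0,1)$ shows that $a^{(k)}\mapsto\sum_n a^{(k)}_n\e^{2\pi inx}$ is unitary from $\ell^2(\Z)$ onto $L^2(0,1)$. Multiplication by the unimodular function $\e^{2\pi i\delta_kx}$ is unitary on $L^2(0,1)$. Taking the direct sum over $k$ gives unitarity onto $L^2(0,1;\C^K)$, and this yields \eqref{eq:v3-variable-coeff-norm}.

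\emph{Step 3: the matrix action.} First take finitely supported coefficients. For $x\in I_r$ and $m\in\Omega_r$, the $1$-periodicity of $B_k(x)=\sum_n a_n^{(k)}\e^{2\pi inx}$ gives
$$
(Sa)(x+m)=\sum_{k}\e^{2\pi i\delta_k(x+m)}B_k(x)=\sum_k\e^{2\pi i\delta_km}A_k(x).
$$
This is exactly \eqref{eq:v3-variable-matrix-action}. Both sides of this identity are bounded operators in $a$: the left side because $WS$ is bounded, which follows from the identity itself on finite sequences; the right side because each $M_r$ is a fixed finite matrix and Step 2 applies. Hence the identity extends by density to all $a\in\bigoplus_k\ell^2(\Z)$. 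Applying the unitarity of $W$ from Step 1 then gives \eqref{eq:v3-variable-norm-identity}.
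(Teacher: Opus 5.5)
Your proposal is correct and follows essentially the same route as the paper. Both arguments get the isometry and surjectivity of $W$ from the a.e.\ unique decomposition $y=x+m$, the coefficient map from Parseval together with unimodular modulation, and the matrix action from the periodicity computation on finitely supported sequences, extended by density because the finitely many matrices $M_r$ give a bounded multiplier.
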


\begin{proof}
Decompose $\R$ into the unit cells $[m,m+1)$ and use $y=x+m$. Then
\begin{align*}
\int_E|f(y)|^2\dd y
&=
\int_0^1\sum_{m\in\Omega_x}|f(x+m)|^2\dd x\\
&=
\sum_{r=1}^R\int_{I_r}
\norm{(Wf)_r(x)}_{\C^{q_r}}^2\dd x.
\end{align*}
Thus $W$ is an isometry. It is onto because a vector field $(F_r)_r\in\mathcal H_E$ defines a function on $E$ by
$$
f(x+m)=(F_r(x))_m,
\qquad x\in I_r,
\quad m\in\Omega_r.
$$
The representation $y=x+m$ with $x\in[0,1)$ is unique outside a null set of endpoints.

Parseval shows that
$$
a^{(k)}\longmapsto\sum_na_n^{(k)}\e^{2\pi inx}
$$
is unitary, and multiplication by $\e^{2\pi i\delta_kx}$ is unitary, proving the coefficient statement.

For a finite coefficient family and $m\in\Z$,
\begin{align*}
(Sa)(x+m)
&=
\sum_{k=1}^K
\e^{2\pi i\delta_km}
\e^{2\pi i\delta_kx}
\sum_{n\in\Z}a_n^{(k)}\e^{2\pi in(x+m)}\\
&=
\sum_{k=1}^K
\e^{2\pi i\delta_km}A_k(x),
\end{align*}
using $\e^{2\pi inm}=1$. This is exactly \eqref{eq:v3-variable-matrix-action}. Since only finitely many matrices occur, the multiplier is bounded, and density extends the identities to all square-summable coefficient families.
\end{proof}

\begin{theorem}[Exact rank criteria for arbitrary finite unions]\label{thm:v3-variable-rank}
The exponential family associated with \eqref{eq:v3-general-Lambda} on the set $E$ in \eqref{eq:v3-general-E} satisfies the following equivalences.
\begin{enumerate}[label=(\roman*)]
\item It is a Riesz sequence in $L^2(E)$ if and only if every $M_r$ has column rank $K$. Necessarily $q_r\ge K$ on every cell.
\item It is a frame for $L^2(E)$ if and only if every $M_r$ with $q_r>0$ has row rank $q_r$. Necessarily $q_r\le K$ on every nonempty cell.
\item It is a Riesz basis of $L^2(E)$ if and only if
$$
q_r=K
$$
and $M_r$ is invertible for every cell.
\end{enumerate}
The optimal squared synthesis bounds are
\begin{equation}\label{eq:v3-variable-seq-bounds}
A_{\mathrm{seq}}
=
\min_r\lambda_{\min}(M_r^*M_r),
\qquad
B
=
\max_r\lambda_{\max}(M_r^*M_r),
\end{equation}
with the convention $M_r^*M_r=0_{K\times K}$ if $q_r=0$. In the frame case, the optimal lower frame bound is
\begin{equation}\label{eq:v3-variable-frame-lower}
A_{\mathrm{fr}}
=
\min_{r:q_r>0}
\lambda_{\min}(M_rM_r^*),
\end{equation}
and the optimal upper frame bound is $B$.
\end{theorem}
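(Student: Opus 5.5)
The plan is to transport the whole problem, via Proposition~\ref{prop:v3-variable-folding}, to a statement about a piecewise-constant multiplication operator between direct sums of vector-valued $L^2$ spaces, and then repeat the finite-cell argument of Lemma~\ref{lem:v3-finite-cell} and Theorem~\ref{thm:v3-unit-rank}, now allowing the target dimension $q_r$ to vary from cell to cell.

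Set $U:\bigoplus_k\ell^2(\Z)\to L^2(0,1;\C^K)$ for the unitary coefficient map $a\mapsto A$. Proposition~\ref{prop:v3-variable-folding} gives $WSU^{-1}=\mathcal M$, where
$$
\mathcal M:\ L^2(0,1;\C^K)\longrightarrow \mathcal H_E,
\qquad
(\mathcal MF)_r=M_rF|_{I_r}.
$$
Since $W$ and $U$ are unitary, $S$ is bounded below, onto, or invertible exactly when $\mathcal M$ is. Moreover, the optimal synthesis constants of $S$, and the optimal constants of $S^*$ (the frame bounds), coincide with those of $\mathcal M$ and $\mathcal M^*$. The adjoint is computed cellwise: $(\mathcal M^*G)(x)=M_r^*G_r(x)$ for $x\in I_r$, with the convention that $G_r=0$ when $q_r=0$.

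For the Riesz sequence part, I will write
$$
\norm{\mathcal MF}^2=\sum_r\int_{I_r}\ip{M_r^*M_rF}{F}\dd x.
$$
The spectral theorem, applied cellwise, then gives the bounds $A_{\rm seq}$ and $B$. Optimality follows from the localized test functions $|I_{r_0}|^{-1/2}\mathbf 1_{I_{r_0}}u$ used in Lemma~\ref{lem:v3-finite-cell}. These lie in $L^2(0,1;\C^K)$ and correspond, through $U^{-1}$, to genuine coefficient families, which can be approximated by finite ones. This step needs the cells $I_r$ to have positive length and to cover $(0,1)$ up to a null set, which holds by construction.

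Positivity of $A_{\rm seq}$ means that each $M_r^*M_r$ is positive definite, that is, each $M_r$ has column rank $K$. This forces $q_r\ge K$. The convention $M_r^*M_r=0$ for $q_r=0$ gives $A_{\rm seq}=0$ there, which is correct: a function supported on such a cell is annihilated by $\mathcal M$. Finiteness of the list of cells turns cellwise positivity into a uniform bound, so Remark~\ref{rem:v3-finite-range} is not an obstacle here.

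For the frame part, the same computation applied to $\mathcal M^*$ gives
$$
\norm{\mathcal M^*G}^2=\sum_{r:q_r>0}\int_{I_r}\ip{M_rM_r^*G_r}{G_r}\dd x.
$$
Cells with $q_r=0$ contribute nothing to either side. This yields $A_{\rm fr}$ and the row-rank criterion, with optimality obtained by localizing a unit eigenvector of $M_{r_0}M_{r_0}^*$ on a minimizing cell. The upper frame bound is $\norm{\mathcal M^*}^2=\norm{\mathcal M}^2=B$.

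For the Riesz basis part, I combine the first two criteria. Both hold exactly when every $M_r$ has full column rank $K$ and full row rank $q_r$, which forces $q_r=K$ and invertibility on every cell. This excludes empty cells, because full column rank is impossible when $q_r=0$ and $K\ge1$. Conversely, the finitely many inverses $M_r^{-1}$ define a bounded inverse multiplier, so $\mathcal M$, and hence $S$, is bijective.

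The main obstacle is the bookkeeping for cells with $q_r=0$, together with the exact identification of the frame bounds of the exponential system with the lower bound of $\mathcal M^*=US^*W^*$. I will handle the empty cells explicitly through the convention stated in the theorem.
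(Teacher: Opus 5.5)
Your proposal is correct and follows essentially the same route as the paper: you fiberize via the folding proposition, apply the spectral theorem cellwise to $M_r^*M_r$ and $M_rM_r^*$, and get optimality by localizing an extremal eigenvector to a single cell. You then use finiteness of the cell list for uniformity and combine the column- and row-rank conditions to force $q_r=K$ and invertibility in the basis case. Your explicit computation of the adjoint multiplier $(\mathcal M^*G)(x)=M_r^*G_r(x)$ and your handling of empty cells spell out steps the paper leaves implicit.
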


\begin{proof}
By Proposition~\ref{prop:v3-variable-folding}, synthesis acts on $I_r$ as the finite matrix $M_r$, while the adjoint acts as $M_r^*$. A positive lower synthesis bound is equivalent to positivity of every $\lambda_{\min}(M_r^*M_r)$ and therefore to full column rank on every cell. A lower frame bound is equivalent to positivity of $\lambda_{\min}(M_rM_r^*)$ on every nonempty target cell and hence to full row rank.

The exact constants follow from the same localization argument as in Lemma~\ref{lem:v3-finite-cell}: choose an extremizing cell and an extremal unit eigenvector, then multiply that vector by the normalized indicator of the cell. Because only finitely many cells occur, the minimum of the positive local singular values is positive whenever all required ranks are full.

A Riesz basis requires both the sequence and frame properties. Thus every cell must satisfy simultaneously $q_r\ge K$ and $q_r\le K$, so $q_r=K$ and $M_r$ must be invertible. Conversely, if every $q_r=K$ and every $M_r$ is invertible, the cellwise inverse multiplier is bounded because there are only finitely many inverse matrices.
\end{proof}

\begin{remark}[Geometric multiplicity condition]\label{rem:v3-multitiling}
For a Riesz basis with $K$ shifted integer lattices, Theorem~\ref{thm:v3-variable-rank} forces
\begin{equation}\label{eq:v3-multiplicity-K}
\sum_{m\in\Z}\mathbf1_E(x+m)=K
\qquad\text{for almost every }x\in(0,1).
\end{equation}
Integrating gives the necessary measure identity
$$
|E|=K.
$$
The converse is false: total measure $K$ does not imply constant fiber multiplicity, and constant multiplicity by itself still has to be accompanied by invertibility of the prescribed phase matrices.
\end{remark}

\begin{proposition}[Chamber rigidity for arbitrary finite unions]\label{prop:v3-general-chambers}
Fix the integer parts of all endpoints $a_j,b_j$, the shifts $\delta_k$, and the combinatorial order of the fractional endpoint set in \eqref{eq:v3-general-partition}. As the endpoint fractional parts vary within a chamber without crossing one another or an integer boundary, the ordered list of integer fibers $\Omega_r$ remains unchanged. Consequently, the matrices $M_r$, all exact bounds in Theorem~\ref{thm:v3-variable-rank}, and the Riesz-sequence/frame/basis status remain unchanged; only the lengths of the cells vary.
\end{proposition}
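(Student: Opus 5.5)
The approach is to show that the ordered list of integer fibers is a function of combinatorial data only, and then to invoke Theorem~\ref{thm:v3-variable-rank}, whose constants depend on the matrices $M_r$ but not on cell lengths.

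First I would describe membership combinatorially. Fix $x\in(0,1)$ off the partition points and fix $m\in\Z$. Then $x+m\in[a_j,b_j)$ exactly when $a_j\le x+m<b_j$. Write $a_j=\lfloor a_j\rfloor+\{a_j\}$. The condition $x+m\ge a_j$ is equivalent to one of the following:
\begin{itemize}
\item[] $m>\lfloor a_j\rfloor$;
\item[] $m=\lfloor a_j\rfloor$ and $x\ge\{a_j\}$.
\end{itemize}
The analogous statement holds for $b_j$. So, for fixed $m$, the truth value of ``$x+m\in E$'' depends only on the integer parts $\lfloor a_j\rfloor,\lfloor b_j\rfloor$ and on which fractional endpoints lie below $x$.

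Next I would fix the chamber data. These are the integer parts, together with the weak order of the set $\{0,1\}\cup\{\{a_j\},\{b_j\}\}$, including which fractional parts coincide with each other or with $0$. With this data fixed, the $r$-th cell $I_r$ is always the open interval between the $(r-1)$-th and $r$-th distinct partition points. The set of fractional endpoints lying below any $x\in I_r$ is therefore the same labelled set for every configuration in the chamber. By the previous step, $\Omega_r=\{m:x+m\in E\}$ is the same finite set of integers, and the number $R$ of cells does not change either.

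Since $M_r=(\e^{2\pi i\delta_km})_{m\in\Omega_r,\,k}$ depends only on $\Omega_r$ and on the fixed shifts, the ordered list $(M_r)_{r=1}^R$ is constant on the chamber. The bounds in \eqref{eq:v3-variable-seq-bounds} and \eqref{eq:v3-variable-frame-lower} are minima and maxima of eigenvalues over this list. The rank conditions in Theorem~\ref{thm:v3-variable-rank} are conditions on the same list. All of these quantities are therefore invariant.

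The theorem's proof already shows that cell lengths play no role: the extremizing test functions are normalized indicators of a single cell, and every cell in the list has positive length. Only the lengths $|I_r|$ vary, exactly as in Proposition~\ref{prop:v3-unit-chambers}.

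The main obstacle is bookkeeping at the boundary. Coincidences among fractional parts, or a fractional part equal to $0$ (an integer endpoint), must be counted as part of the chamber data. Otherwise a cell can collapse to zero length or a new cell can appear, and the list of $\Omega_r$ changes. I would handle this by stating the chamber explicitly as the stratum of configurations with a fixed weak order, which is what ``without crossing one another or an integer boundary'' means. The half-open convention $[a_j,b_j)$ affects only endpoints, which form a null set.
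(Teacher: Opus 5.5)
Your proposal is correct and follows the same route as the paper. Membership of $x+m$ in each $[a_j,b_j)$ is determined by the integer parts and by which fractional endpoints lie below $x$, so fixed chamber data fixes every $\Omega_r$, hence every $M_r$, and hence all the cell-length-independent quantities in Theorem~\ref{thm:v3-variable-rank}. Your explicit floor/fractional-part characterization, together with the weak-order bookkeeping for coincident fractional parts and fractional parts equal to $0$, makes rigorous what the paper compresses into one sentence.
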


\begin{proof}
For each fixed integer $m$, membership of $x+m$ in any interval $[a_j,b_j)$ can change only when $x$ crosses the fractional part of an endpoint. If the integer parts and the order of these crossing events are fixed, the same membership changes occur in the same order. Hence the fiber sets $\Omega_r$ and therefore the phase matrices $M_r$ are unchanged.
\end{proof}

\subsection{Examples with unequal interval lengths}

\begin{example}[Unequal lengths with an exact Riesz basis]\label{ex:v3-unequal-basis}
Take
$$
E=[0,1/2)\cup[3/2,3),
\qquad
\delta_1=0,
\qquad
\delta_2=1/3.
$$
The component lengths are $1/2$ and $3/2$. The integer fibers are
$$
\Omega_x
=
\begin{cases}
\{0,2\},&0<x<1/2,\\
\{1,2\},&1/2<x<1.
\end{cases}
$$
Let $\zeta=\e^{2\pi i/3}$. The two local matrices are
$$
M_1=
\begin{pmatrix}
1&1\\
1&\zeta^2
\end{pmatrix},
\qquad
M_2=
\begin{pmatrix}
1&\zeta\\
1&\zeta^2
\end{pmatrix}.
$$
Each Gram matrix has diagonal entries $2$ and an off-diagonal entry of modulus $1$. Hence each has eigenvalues $1$ and $3$. Therefore
$$
\{\e^{2\pi iny}:n\in\Z\}
\cup
\{\e^{2\pi i(n+1/3)y}:n\in\Z\}
$$
is a Riesz basis of $L^2(E)$ with exact squared bounds
$$
A=1,
\qquad
B=3.
$$
\end{example}

\begin{example}[Total measure alone is insufficient]\label{ex:v3-measure-insufficient}
Let
$$
E=[0,1/2)\cup[1,5/2).
$$
Again $|E|=2$, but
$$
q(x)
=
\begin{cases}
3,&0<x<1/2,\\
1,&1/2<x<1.
\end{cases}
$$
For any choice of two shifted integer lattices, the matrix on the first cell has three rows and two columns and therefore cannot have full row rank; the matrix on the second cell has one row and two columns and therefore cannot have full column rank. The corresponding two-lattice exponential family is consequently neither a frame nor a Riesz sequence in $L^2(E)$.
\end{example}

\subsection{Arithmetic-shift criteria in the variable-fiber setting}

When the shifts have the arithmetic form \eqref{eq:v3-arithmetic-shifts}, each cell matrix in \eqref{eq:v3-variable-matrix-action} has rows indexed by integers $m\in\Omega_r$ and columns
$$
\e^{2\pi i(\delta_0+(k-1)\tau)m}
=
\e^{2\pi i\delta_0m}
\left(\e^{2\pi i\tau m}\right)^{k-1}.
$$
Thus, up to a unitary diagonal row factor, $M_r$ is a rectangular Vandermonde matrix with nodes
$$
z_m=\e^{2\pi i\tau m},
\qquad m\in\Omega_r.
$$
In the basis case $q_r=K$, Theorem~\ref{thm:v3-variable-rank} immediately gives the following criterion.

\begin{corollary}[Vandermonde criterion on arbitrary finite unions]\label{cor:v3-general-vandermonde}
Assume the shifts satisfy \eqref{eq:v3-arithmetic-shifts}. The corresponding family is a Riesz basis of $L^2(E)$ if and only if
\begin{enumerate}[label=(\roman*)]
\item $q_r=K$ for every cell, and
\item for every cell and every two distinct integers $m,m'\in\Omega_r$,
\begin{equation}\label{eq:v3-general-vandermonde-condition}
\tau(m-m')\notin\Z.
\end{equation}
\end{enumerate}
\end{corollary}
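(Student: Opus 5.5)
The plan is to reduce the statement to Theorem~\ref{thm:v3-variable-rank}(iii) and then identify invertibility of each square cell matrix with a Vandermonde determinant condition. By Theorem~\ref{thm:v3-variable-rank}(iii), the family is a Riesz basis of $L^2(E)$ if and only if $q_r=K$ and $M_r$ is invertible on every positive-length cell. This gives condition (i) at once. It remains to show that, for a cell with $q_r=K$, invertibility of $M_r$ is equivalent to condition (ii) on that cell.

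Fix such a cell and list $\Omega_r=\{m_1,\ldots,m_K\}$. Under \eqref{eq:v3-arithmetic-shifts} the entries factor as
$$
(M_r)_{jk}=\e^{2\pi i\delta_0 m_j}\bigl(\e^{2\pi i\tau m_j}\bigr)^{k-1},
$$
so $M_r=D_rV_r$. Here $D_r=\operatorname{diag}(\e^{2\pi i\delta_0m_j})_{j=1}^K$ is unitary, and $V_r=(z_{m_j}^{k-1})_{j,k}$ is a square Vandermonde matrix with nodes $z_m=\e^{2\pi i\tau m}$. Because $D_r$ is invertible, $M_r$ is invertible exactly when $V_r$ is. Taking determinants gives
$$
\det M_r=\det D_r\prod_{1\le j<\ell\le K}\bigl(z_{m_\ell}-z_{m_j}\bigr),
$$
which is nonzero if and only if the nodes are pairwise distinct. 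For integers $m\ne m'$, the equality $z_m=z_{m'}$ holds exactly when $\e^{2\pi i\tau(m-m')}=1$, that is, when $\tau(m-m')\in\Z$. So invertibility of $M_r$ is equivalent to \eqref{eq:v3-general-vandermonde-condition} for all distinct $m,m'\in\Omega_r$.

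Combining the two equivalences over all cells proves both directions. One point needs checking: the paper's standing assumption that the $\delta_k$ are pairwise incongruent modulo $\Z$ requires $(k-1)\tau\notin\Z$ for $1\le k-1\le K-1$. This assumption does not enter the argument above, which works with whatever arithmetic progression is given. The cells with $q_r=0$ are already excluded by (i), since $K\ge1$. The argument therefore has no real obstacle. The only care needed is in handling the row-scaling factor $D_r$ and in using the cell lists $\Omega_r$ from \eqref{eq:v3-Omega-cell}, not the fibers $\Omega_x$ at individual points.
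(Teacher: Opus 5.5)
Your proposal is correct and follows the paper's argument: Theorem~\ref{thm:v3-variable-rank}(iii) reduces the question to invertibility of the square cell matrices. Factoring out the unimodular diagonal row factor then leaves a Vandermonde determinant, which vanishes exactly when two nodes $\e^{2\pi i\tau m}$, $m\in\Omega_r$, coincide. Your side remark is also sound: if the incongruence assumption failed, $\e^{2\pi i\tau}$ would be a root of unity of order at most $K-1$, so condition (ii) would fail by pigeonhole, consistent with $M_r$ then having repeated columns.
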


\begin{proof}
On a cell with $q_r=K$, the determinant is a nonzero unimodular factor times
$$
\prod_{\substack{m<m'\\ m,m'\in\Omega_r}}
\left(\e^{2\pi i\tau m'}-\e^{2\pi i\tau m}\right).
$$
It is nonzero precisely under \eqref{eq:v3-general-vandermonde-condition}.
\end{proof}

\section{Applications of the finite-cell framework}
\label{sec:finite-cell-applications}

The finite-cell formulation developed in Section~\ref{sec:finite-unions}
has two further applications. We first use periodicity of spectra for finite
unions of intervals to obtain finite Hadamard constraints and arithmetic
rigidity on the integer fibers. We then show that rationally commensurate
multirate sampling systems reduce exactly to finitely many common-lattice
channels governed by the same phase matrices.

\subsection{Periodic spectra, Hadamard fibers, and spectral rigidity}
\label{subsec:fuglede-fibers}

We next connect the variable-fiber representation of Section~\ref{sec:finite-unions} with spectral sets. Let $\Omega\subset\R$ be a bounded measurable set of measure one. Recall that $\Lambda\subset\R$ is a spectrum for $\Omega$ if the exponentials
$$
\{\e^{2\pi i\lambda x}:\lambda\in\Lambda\}
$$
form an orthogonal basis of $L^2(\Omega)$. For finite unions of intervals, every spectrum is periodic~\cite{BoseMadan2011}. Recent work has established the spectral-implies-tiling direction of Fuglede's conjecture for unions of three intervals~\cite{Shi2026ThreeIntervals}, while the corresponding general problem for four or more intervals remains a natural next case.

Assume that a spectrum of $\Omega$ has period $d\in\mathbb N$ and is written as
\begin{equation}\label{eq:periodic-spectrum-form}
\Lambda
=
\bigcup_{k=1}^{d}(d\Z+\lambda_k),
\qquad
\lambda_k\not\equiv\lambda_\ell\pmod d
\quad(k\neq\ell).
\end{equation}
For a normalized spectral set, the number of representatives in a period equals $d$ by the density theorem. Define
$$
E=d\Omega,
\qquad
\delta_k=\lambda_k/d.
$$
Then
$$
\Lambda/d
=
\bigcup_{k=1}^{d}(\Z+\delta_k).
$$
The fact that a Riesz basis formed by finitely many translates of a lattice forces a corresponding multi-tiling condition is known; see \cite{AgoraAntezanaCabrelli2015}. In the orthonormal spectral setting considered here, the finite-cell representation additionally forces each local square phase matrix to be a scaled unitary matrix. We use these Hadamard identities to investigate the arithmetic constraints generated by transitions between adjacent fibers.
\begin{proposition}[Finite Hadamard fibers associated with a periodic spectrum]
\label{prop:spectral-hadamard-fibers}
Let $\Omega$ be a finite union of bounded intervals of measure one and let \eqref{eq:periodic-spectrum-form} be a spectrum. For $E=d\Omega$, let
$$
\Omega_x^E
=
\{m\in\Z:x+m\in E\},
\qquad 0\leq x<1,
$$
and let $\Omega_r^E$ be its finitely many cell values as in Proposition~\ref{prop:v3-variable-folding}. Then
\begin{equation}\label{eq:spectral-fiber-cardinality}
|\Omega_r^E|=d
\end{equation}
for every positive-measure cell, and the phase matrix
\begin{equation}\label{eq:spectral-fiber-matrix}
M_r
=
\left(
\e^{2\pi i\delta_k m}
\right)_{
\substack{m\in\Omega_r^E\\1\leq k\leq d}}
\end{equation}
satisfies
\begin{equation}\label{eq:hadamard-fiber}
M_r^*M_r=dI_d.
\end{equation}
In particular, $d^{-1/2}M_r$ is unitary for every cell.
\end{proposition}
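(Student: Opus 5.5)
The plan is to transport the orthogonal-basis property of $\Lambda$ on $\Omega$ to the dilated set $E=d\Omega$, and then read it off through the folding of Proposition~\ref{prop:v3-variable-folding}.

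\emph{Step 1: rescale.} The dilation $(Uf)(y)=d^{-1/2}f(y/d)$ is unitary from $L^2(\Omega)$ onto $L^2(E)$. It sends $\e^{2\pi i\lambda x}$ to $d^{-1/2}\e^{2\pi i(\lambda/d)y}$. Each exponential has norm $|\Omega|^{1/2}=1$ in $L^2(\Omega)$. Hence $\{d^{-1/2}\e^{2\pi i\mu y}\}_{\mu\in\Lambda/d}$ is an orthonormal basis of $L^2(E)$, with $\Lambda/d=\bigcup_{k=1}^d(\Z+\delta_k)$. The $\delta_k$ are pairwise incongruent mod $\Z$ because the $\lambda_k$ are incongruent mod $d$. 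So we are exactly in the setting of Theorem~\ref{thm:v3-variable-rank} with $K=d$.

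\emph{Step 2: synthesis is a scaled unitary.} Let $S$ be the synthesis operator \eqref{eq:v3-general-synthesis}. Orthonormality of the rescaled family gives
\[
\norm{Sa}_{L^2(E)}^2=d\sum_k\norm{a^{(k)}}^2 ,
\]
and completeness makes $S$ onto. In particular the family is a Riesz basis with optimal bounds $A_{\mathrm{seq}}=B=d$. Theorem~\ref{thm:v3-variable-rank}(iii) then gives $q_r=K=d$ on every positive-measure cell, which is \eqref{eq:spectral-fiber-cardinality}.

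\emph{Step 3: the Hadamard identity.} Theorem~\ref{thm:v3-variable-rank} also identifies the optimal constants as
\[
A_{\mathrm{seq}}=\min_r\lambda_{\min}(M_r^*M_r),\qquad B=\max_r\lambda_{\max}(M_r^*M_r).
\]
Both equal $d$, so every eigenvalue of every $M_r^*M_r$ lies in $[d,d]$. Since $M_r^*M_r$ is Hermitian, this forces $M_r^*M_r=dI_d$, which is \eqref{eq:hadamard-fiber}. Because $M_r$ is square, $d^{-1/2}M_r$ is then unitary.

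Alternatively, one can argue directly from \eqref{eq:v3-variable-norm-identity} and \eqref{eq:v3-variable-coeff-norm}. Take $A=|I_r|^{-1/2}\mathbf 1_{I_r}u$ for a unit vector $u$. The norm identity then yields $\norm{M_ru}^2=d$ for every unit $u$, and polarization gives \eqref{eq:hadamard-fiber}.

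\emph{Main obstacle.} The only delicate point is the normalization in Step 1, namely getting the factor $d$ rather than $1$ or $d^{-1}$ from the dilation. Here I will check that $\norm{\e^{2\pi i\mu y}}_{L^2(E)}^2=|E|=d$. Everything else is a direct appeal to the exactness of the constants in Theorem~\ref{thm:v3-variable-rank}.
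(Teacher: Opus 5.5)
Your proof is correct. For the Hadamard identity you follow the paper. Your ``alternative'' (localize $A=|I_r|^{-1/2}\mathbf 1_{I_r}u$ in the folded norm identity and polarize) is exactly the paper's argument. Reading the identity off from $A_{\mathrm{seq}}=B=d$ via the optimal constants of Theorem~\ref{thm:v3-variable-rank} is the same localization, packaged inside that theorem.

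The genuine difference is how you obtain $|\Omega_r^E|=d$. You use completeness of the spectrum to make $S$ onto, and then invoke part (iii) of Theorem~\ref{thm:v3-variable-rank}. The inequality $q_r\le K$ there comes from its frame (row-rank) half.

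The paper proceeds in the opposite order:
\begin{itemize}
\item It first derives $M_r^*M_r=dI_d$, whatever $q_r$ is.
\item This gives full column rank, hence $|\Omega_r^E|\ge d$ on every cell.
\item It then forces equality by integrating the multiplicity: $\int_0^1|\Omega_x^E|\,dx=|E|=d$.
\end{itemize}
That route uses only orthogonality of the $d$ coset families together with $|E|=d$. Completeness enters only through the coset count in \eqref{eq:periodic-spectrum-form}. It also yields completeness as a by-product, since square matrices with $M_r^*M_r=dI_d$ are invertible.

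Your route skips the measure-counting step, at the price of using completeness directly through the full Riesz-basis/frame criterion. Both arguments are sound.
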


\begin{proof}
Under the change of variables $y=dx$, the spectral family on $\Omega$ becomes, up to the common spatial scaling, the family with frequency set $\Lambda/d$ on $E=d\Omega$. Since the original exponentials are orthogonal and $|E|=d$, the scaled synthesis operator satisfies
$$
\norm{Sa}_{L^2(E)}^2
=d\sum_{k=1}^{d}\norm{a^{(k)}}_2^2.
$$
Apply Proposition~\ref{prop:v3-variable-folding}. The coefficient map onto $L^2(0,1;\C^d)$ is unitary, while on a cell $I_r$ synthesis is multiplication by $M_r$. Localizing an arbitrary vector function to $I_r$ gives
$$
\norm{M_rz}_{\C^{|\Omega_r^E|}}^2
=d\norm z_{\C^d}^2
\qquad(z\in\C^d).
$$
Hence $M_r^*M_r=dI_d$, so $M_r$ has rank $d$ and $|\Omega_r^E|\geq d$.

Since
$$
\int_0^1|\Omega_x^E|\,dx=|E|=d,
$$
and $|\Omega_x^E|\geq d$ almost everywhere, equality must hold almost everywhere. This proves \eqref{eq:spectral-fiber-cardinality} and \eqref{eq:hadamard-fiber}.
\end{proof}

The preceding proposition has an elementary but useful geometric consequence. Let $c\in[0,1)$ be a fractional endpoint of $E$. As $x$ crosses $c$, the cardinality of $\Omega_x^E$ changes by the number of interval left endpoints having fractional part $c$ minus the number of right endpoints having fractional part $c$. Since the cardinality is constant, these multiplicities must balance.

\begin{lemma}[Endpoint balance]
\label{lem:endpoint-balance}
Under the assumptions of Proposition~\ref{prop:spectral-hadamard-fibers}, for every $c\in[0,1)$ the number of left endpoints of $E$ with fractional part $c$ equals the number of right endpoints of $E$ with fractional part $c$, counted with multiplicity.
\end{lemma}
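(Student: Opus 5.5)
The plan is to compute the jump of the counting function $x\mapsto|\Omega_x^E|$ at $c$ and then use the constancy $|\Omega_r^E|=d$ from Proposition~\ref{prop:spectral-hadamard-fibers} to force every jump to be zero. Write $E=\bigcup_j[A_j,B_j)$ with $A_j=da_j$, $B_j=db_j$, where the intervals are disjoint up to endpoints. For $x\in[0,1)$,
$$
|\Omega_x^E|=\sum_{m\in\Z}\mathbf 1_E(x+m)=\sum_{j}\#\{m\in\Z:A_j\le x+m<B_j\}.
$$
This holds off the finite null set of endpoints, and it uses the disjointness so that no point is counted twice. For a single interval $[A,B)$ the count $\#\{m: A\le x+m<B\}$ is a step function of $x$ on $[0,1)$. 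It increases by one as $x$ crosses $\{A\}$ from left to right, because a new integer translate $x+m$ enters through the left endpoint. It decreases by one as $x$ crosses $\{B\}$, because a translate leaves through the right endpoint. It is constant elsewhere.

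I will verify this directly. The count equals $\lceil B-x\rceil-\lceil A-x\rceil$, and the map $x\mapsto-\lceil A-x\rceil$ jumps up by exactly one at $x\equiv A \pmod 1$. The same computation with $B$ gives the jump down at $x\equiv B\pmod 1$.

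Summing over $j$, the jump of $|\Omega_x^E|$ at $c\in(0,1)$, computed from the one-sided limits as $x\to c^{\pm}$, equals
$$
\#\{j:\{A_j\}=c\}-\#\{j:\{B_j\}=c\}.
$$
The point $c=0$ needs separate handling. Here I compare $x\to0^+$ with $x\to1^-$, using the periodic extension, which is valid because $\sum_m\mathbf 1_E(x+m)$ is $1$-periodic in $x$. The jump at $c=0$ is then the same difference, with integer endpoints counted.

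By Proposition~\ref{prop:spectral-hadamard-fibers}, $|\Omega_r^E|=d$ on every positive-measure cell. All one-sided limits are therefore equal to $d$, and every jump vanishes, which yields the stated balance. If $c$ is not a fractional endpoint, both counts are zero and there is nothing to prove.

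The main obstacle is bookkeeping when several endpoints share a fractional part, or when a right endpoint of one interval coincides with the left endpoint of the next. In the coincidence case $B_j=A_{j'}$, the two contributions cancel in the jump but are still counted on both sides of the identity. This is consistent with the phrase ``counted with multiplicity''. The jump formula handles all of these cases because it is additive over intervals, so no case split is needed beyond the periodic treatment of $c=0$.
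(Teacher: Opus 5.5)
Your proposal is correct and follows the paper's own argument. The jump of $x\mapsto|\Omega_x^E|$ at $c$ equals the number of left endpoints minus the number of right endpoints with fractional part $c$, and the constancy $|\Omega_x^E|=d$ from Proposition~\ref{prop:spectral-hadamard-fibers} forces every jump to vanish; your ceiling-function count and periodic treatment of $c=0$ just make explicit what the paper's two-line proof leaves implicit.
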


\begin{proof}
The jump of $x\mapsto|\Omega_x^E|$ at $c$ is exactly the difference between these two multiplicities. Proposition~\ref{prop:spectral-hadamard-fibers} shows that this function equals $d$ almost everywhere, hence every jump is zero.
\end{proof}

Thus adjacent fibers have equal cardinality and differ by exchanges. The next lemma is the basic rigidity mechanism.

\begin{lemma}[Single-row exchange rigidity]
\label{lem:single-row-exchange}
Let $\Gamma,\Gamma'\subset\Z$ have cardinality $d$ and suppose
$$
\Gamma'=(\Gamma\setminus\{a\})\cup\{b\},
\qquad a\neq b.
$$
Let $D=\{\delta_1,\ldots,\delta_d\}\subset\R/\Z$, and assume that both phase matrices
$$
M(\Gamma,D)
=
\left(\e^{2\pi i\delta_km}\right)_{m\in\Gamma,\,k},
\qquad
M(\Gamma',D)
=
\left(\e^{2\pi i\delta_km}\right)_{m\in\Gamma',\,k}
$$
satisfy $M^*M=dI_d$. Then
\begin{equation}\label{eq:single-exchange-arithmetic}
(\delta_j-\delta_k)(b-a)\in\Z
\qquad
(1\leq j,k\leq d).
\end{equation}
\end{lemma}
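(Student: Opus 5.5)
The plan is to pass from the Gram identities to identities between columns, and then compare the two fibers row by row.

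Since $M^*M=dI_d$ for a square $d\times d$ matrix, $d^{-1/2}M$ is unitary, so we also have $MM^*=dI_d$. It is more useful, however, to read the hypothesis column by column. Write $c_k(\Gamma)=(\e^{2\pi i\delta_km})_{m\in\Gamma}$. The identity $M(\Gamma,D)^*M(\Gamma,D)=dI_d$ says exactly that
\[
\sum_{m\in\Gamma}\e^{2\pi i(\delta_k-\delta_j)m}=d\,\delta_{jk}
\]
for all $j,k$, and the analogous identity holds with $\Gamma'$ in place of $\Gamma$.

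Fix $j\neq k$ and put $\theta=\delta_k-\delta_j$. The sets $\Gamma$ and $\Gamma'$ share the $d-1$ elements of $\Gamma\setminus\{a\}$. Subtracting the two orthogonality relations therefore cancels every common term and leaves
\[
\e^{2\pi i\theta b}-\e^{2\pi i\theta a}=0 .
\]
Hence $\e^{2\pi i\theta(b-a)}=1$, that is, $(\delta_k-\delta_j)(b-a)\in\Z$. For $j=k$ the difference is $0$, so \eqref{eq:single-exchange-arithmetic} holds trivially. Because the $\delta$'s are defined only modulo $\Z$ and $b-a$ is an integer, the condition is well defined on $\R/\Z$.

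The main point to check is that the hypothesis is used in its column-orthogonality form $M^*M$, indexed by the pair $(j,k)$ and summed over the rows $m$. That form is exactly what makes the common rows cancel. The row form $MM^*$ would not give the cancellation so directly, so no unitarity argument beyond the stated hypothesis is needed.
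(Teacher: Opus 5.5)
Your argument is the paper's proof: read $M^*M=dI_d$ as orthogonality of the columns $j\neq k$, subtract the sums over $\Gamma$ and $\Gamma'$ so that the $d-1$ common rows cancel, and conclude $\e^{2\pi i(\delta_k-\delta_j)(b-a)}=1$, with the case $j=k$ trivial. One assumption is implicit in both your proof and the paper's: the cancellation uses $a\in\Gamma$, hence $b\notin\Gamma$, which is the intended meaning of an exchange. If instead $a\notin\Gamma$, the cardinality hypothesis forces $b\in\Gamma$ and $\Gamma'=\Gamma$, the subtraction gives only $0=0$, and the conclusion need not hold.
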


\begin{proof}
For $j\neq k$, orthogonality of columns gives
$$
\sum_{m\in\Gamma}
\e^{2\pi i(\delta_j-\delta_k)m}=0,
\qquad
\sum_{m\in\Gamma'}
\e^{2\pi i(\delta_j-\delta_k)m}=0.
$$
Subtracting the two equalities yields
$$
\e^{2\pi i(\delta_j-\delta_k)b}
=
\e^{2\pi i(\delta_j-\delta_k)a},
$$
which is equivalent to \eqref{eq:single-exchange-arithmetic}. The case $j=k$ is trivial.
\end{proof}

More generally, suppose an adjacent-cell transition removes $A=\{a_1,\ldots,a_r\}$ and inserts $B=\{b_1,\ldots,b_r\}$. The same subtraction gives the following exact identity.

\begin{lemma}[Rank-$r$ exchange identity]
\label{lem:rank-r-exchange}
Under the same Hadamard hypotheses, if
$$
\Gamma'=(\Gamma\setminus A)\cup B,
\qquad |A|=|B|=r,
$$
then, for every $j\neq k$,
\begin{equation}\label{eq:rank-r-exchange}
\sum_{\nu=1}^{r}
\e^{2\pi i(\delta_j-\delta_k)a_\nu}
=
\sum_{\nu=1}^{r}
\e^{2\pi i(\delta_j-\delta_k)b_\nu}.
\end{equation}
\end{lemma}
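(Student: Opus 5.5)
The argument is the subtraction used in Lemma~\ref{lem:single-row-exchange}, applied to exchanged blocks rather than single rows. Fix $j\neq k$ and put $\theta=\delta_j-\delta_k$. The Hadamard identity $M^*M=dI_d$ for $M=M(\Gamma,D)$ says, entrywise, that distinct columns are orthogonal. The $(k,j)$ entry of $M^*M$ is $\sum_{m\in\Gamma}\e^{-2\pi i\delta_km}\e^{2\pi i\delta_jm}$, so
$$
\sum_{m\in\Gamma}\e^{2\pi i\theta m}=0 .
$$
The same identity for $M(\Gamma',D)$ gives $\sum_{m\in\Gamma'}\e^{2\pi i\theta m}=0$. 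Only column orthogonality is needed here, not the normalization of the diagonal.

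Next split the index sets. We may assume $A\subseteq\Gamma$, since otherwise the removal is not a genuine exchange, and we may assume $B\cap(\Gamma\setminus A)=\emptyset$, so that $|\Gamma'|=d$ as the hypothesis requires. Write $\Gamma_0=\Gamma\setminus A$. Then $\Gamma=\Gamma_0\sqcup A$ and $\Gamma'=\Gamma_0\sqcup B$ are disjoint unions, and both sums break into a common part over $\Gamma_0$ plus the sum over $A$ or over $B$. Subtracting the two vanishing sums cancels the $\Gamma_0$ contribution and leaves
$$
\sum_{\nu=1}^r\e^{2\pi i\theta a_\nu}-\sum_{\nu=1}^r\e^{2\pi i\theta b_\nu}=0 ,
$$
which is exactly \eqref{eq:rank-r-exchange}.

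The one point needing care is the disjointness bookkeeping. If $A$ and $B$ overlapped, the common elements would cancel on both sides anyway, so the identity is unaffected. Since $|\Gamma'|=|\Gamma|=d$ forces $|B|=|A|$ once $B$ is disjoint from $\Gamma_0$, there is no real obstacle.
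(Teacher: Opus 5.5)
Your proof is correct and is the paper's argument: the paper gives no separate proof, only that ``the same subtraction'' as in Lemma~\ref{lem:single-row-exchange} applies, which is exactly your subtraction of the two column-orthogonality relations with the common part $\Gamma\setminus A$ cancelling. One wording point: $A\subseteq\Gamma$ (and hence $B\cap(\Gamma\setminus A)=\emptyset$) is part of what ``removes $A$ and inserts $B$'' means, not a harmless without-loss-of-generality step, because if $A\not\subseteq\Gamma$ the literal identity can fail.
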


For two-row exchanges one obtains a useful dichotomy.

\begin{lemma}[Two-row exchange dichotomy]
\label{lem:two-row-exchange}
Assume $r=2$ in Lemma~\ref{lem:rank-r-exchange} and fix $\theta=\delta_j-\delta_k$. Then either
\begin{equation}\label{eq:two-row-permutation}
\left\{
\e^{2\pi i\theta a_1},
\e^{2\pi i\theta a_2}
\right\}
=
\left\{
\e^{2\pi i\theta b_1},
\e^{2\pi i\theta b_2}
\right\},
\end{equation}
or both pairs are antipodal:
\begin{equation}\label{eq:two-row-antipodal}
\e^{2\pi i\theta(a_1-a_2)}=-1,
\qquad
\e^{2\pi i\theta(b_1-b_2)}=-1.
\end{equation}
\end{lemma}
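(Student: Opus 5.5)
Set $u_\nu=\e^{2\pi i\theta a_\nu}$ and $v_\nu=\e^{2\pi i\theta b_\nu}$ for $\nu=1,2$. All four are unimodular, and Lemma~\ref{lem:rank-r-exchange} with $r=2$ gives
$$
w:=u_1+u_2=v_1+v_2.
$$
The lemma then reduces to an elementary fact about pairs of points on the unit circle with equal sums, and the plan is to split according to whether $w$ vanishes.

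\emph{Case $w\neq0$.} Use the identity for unimodular numbers
$$
u_1u_2=\frac{u_1+u_2}{\overline{u_1}+\overline{u_2}}=\frac{w}{\overline w},
$$
which follows from $\overline{u_\nu}=u_\nu^{-1}$, so that $\overline{u_1}+\overline{u_2}=(u_1+u_2)/(u_1u_2)$. The same computation gives $v_1v_2=w/\overline w$. Hence both pairs have the same sum and the same product, so they are the two roots of the same quadratic $z^2-wz+w/\overline w$. Therefore $\{u_1,u_2\}=\{v_1,v_2\}$ as multisets, which is \eqref{eq:two-row-permutation}.

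\emph{Case $w=0$.} Here $u_2=-u_1$ and $v_2=-v_1$. Thus $u_1\overline{u_2}=-1$, that is, $\e^{2\pi i\theta(a_1-a_2)}=-1$, and similarly $\e^{2\pi i\theta(b_1-b_2)}=-1$. This is \eqref{eq:two-row-antipodal}.

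The only step needing care is justifying the product formula, which requires $w\neq0$; this is exactly where the dichotomy arises. The two cases may overlap, since in the antipodal case the sets can also coincide, but that is consistent with the "either/or" in the statement.
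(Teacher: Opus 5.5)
Your proof is correct and follows essentially the same route as the paper: a case split on whether the common sum vanishes, antipodality when it does, and otherwise the identity $u_1u_2=w/\overline w$ for unimodular numbers, so both pairs are the roots of the same quadratic $t^2-wt+w/\overline w$. Your derivation of that product identity from $\overline{u_\nu}=u_\nu^{-1}$ is a small detail the paper leaves implicit.
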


\begin{proof}
Put $z_\nu=\e^{2\pi i\theta a_\nu}$ and $w_\nu=\e^{2\pi i\theta b_\nu}$. Equation~\eqref{eq:rank-r-exchange} says $z_1+z_2=w_1+w_2=:s$. If $s=0$, each pair consists of antipodal points and \eqref{eq:two-row-antipodal} follows. If $s\neq0$, then for any two unit-modulus numbers with sum $s$,
$$
z_1z_2=\frac{s}{\overline s}.
$$
Thus $z_1,z_2$ and $w_1,w_2$ are the roots of the same quadratic polynomial
$$
t^2-st+\frac{s}{\overline s},
$$
which proves \eqref{eq:two-row-permutation}.
\end{proof}

The single-exchange lemma already yields a conditional tiling criterion. Consider the cyclic sequence of positive-measure cells and assume every transition is a single-row exchange. Write the corresponding integer displacements as
$$
q_1,\ldots,q_R,
\qquad
q_r=b_r-a_r,
$$
and put
\begin{equation}\label{eq:transition-gcd}
g=\gcd(|q_1|,\ldots,|q_R|).
\end{equation}

\begin{theorem}[A fiber-transition criterion forcing lattice spectrality and tiling]
\label{thm:fiber-transition-tiling}
Under the assumptions of Proposition~\ref{prop:spectral-hadamard-fibers}, suppose every adjacent fiber transition is a single-row exchange. Then
\begin{equation}\label{eq:D-difference-group}
D-D
\subset
\frac1g\Z/\Z,
\qquad
D=\{\delta_1,\ldots,\delta_d\}.
\end{equation}
In particular $g\geq d$. If $g=d$, then $D$ is a translate of the full subgroup $\frac1d\Z/\Z$, the original spectrum $\Lambda$ is a translate of $\Z$, and $\Omega$ tiles $\R$ by translations of $\Z$.
\end{theorem}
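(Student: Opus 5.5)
The plan is to combine the single-row exchange rigidity (Lemma~\ref{lem:single-row-exchange}) over all transitions, then use the Hadamard structure of one fiber to count.

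\emph{Step 1: the difference set lies in a cyclic group.} Fix $j,k$ and set $\theta=\delta_j-\delta_k$. Lemma~\ref{lem:single-row-exchange}, applied at each transition $r$, gives $\theta q_r\in\Z$ for every $r$. Since $g$ is an integer combination of the $q_r$ (B\'ezout), it follows that $\theta g\in\Z$. Hence $\theta\in\frac1g\Z/\Z$, which is \eqref{eq:D-difference-group}. Some transition must actually occur, so that $g$ is well defined and positive. If there is only one cell, the fibre is constant, so $E$ is a union of integer translates of $[0,1)$; this degenerate case has to be handled by convention, and I would treat it as $g=0$ being excluded.

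\emph{Step 2: $g\ge d$.} After translating $D$ by $-\delta_1$, which multiplies the columns of $M_r$ by unimodular row factors and preserves $M_r^*M_r=dI_d$, the set $D$ is a subset of the group $G=\frac1g\Z/\Z$ of order $g$. The $\delta_k$ are pairwise incongruent mod $\Z$, so they give $d$ distinct elements of $G$. Therefore $d\le g$.

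\emph{Step 3: the case $g=d$.} If $g=d$, then $D-\delta_1$ is a $d$-element subset of a group of order $d$, so it equals $\frac1d\Z/\Z$. Hence $\Lambda/d=\bigcup_k(\Z+\delta_k)=\frac1d\Z+\delta_1$, and so $\Lambda=\Z+d\delta_1=\Z+\lambda_1$, a translate of $\Z$. The remaining claim is that $\Omega$ tiles $\R$ by $\Z$. Since $\Z+\lambda_1$ is a spectrum for $\Omega$ with $|\Omega|=1$, the exponentials $\e^{2\pi i nx}$ form an orthonormal basis of $L^2(\Omega)$. Orthogonality gives $\widehat{\mathbf 1_\Omega}(n)=0$ for $n\ne0$. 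Then the periodization $\sum_m\mathbf 1_\Omega(x+m)$, which lies in $L^1(\R/\Z)$, has Fourier coefficients $\delta_{n0}$ and therefore equals $1$ almost everywhere. This is exactly tiling by $\Z$. Alternatively, one can invoke the standard Fuglede equivalence for lattices.

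\emph{Main obstacle.} The delicate point is Step~1's use of all transitions. The transitions must be read cyclically, including the wrap from the last cell back to the first across $x=0$, where the fibre shifts by the integer translation $m\mapsto m+1$. I would verify that this wrap is also a genuine exchange of the described type, or that it contributes no new constraint. If it does not fit the hypothesis, the gcd should simply be taken over the interior transitions, and the argument goes through unchanged.
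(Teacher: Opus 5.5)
Your proposal is correct and follows the paper's proof essentially step for step: Lemma~\ref{lem:single-row-exchange} at each transition plus B\'ezout gives the inclusion, counting $d$ distinct points in a cyclic group of order $g$ gives $g\ge d$, and when $g=d$ you identify $\Lambda$ as a translate of $\Z$ and show that the $\Z$-periodization of $\mathbf 1_\Omega$ has Fourier coefficients $\delta_{n0}$. Your caveats about the single-cell case (where $g$ is undefined) and about how to read the wrap-around transition are legitimate points that the paper leaves implicit, and they do not affect the argument.
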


\begin{proof}
Lemma~\ref{lem:single-row-exchange} gives
$$
(\delta_j-\delta_k)q_r\in\Z
$$
for every $r,j,k$. Bezout's identity applied to the integers $q_r$ gives
$$
g(\delta_j-\delta_k)\in\Z,
$$
which proves \eqref{eq:D-difference-group}. Hence, after subtracting one element of $D$, the set $D$ is contained in the subgroup $\frac1g\Z/\Z$, which has $g$ elements. Since $D$ contains $d$ distinct elements, $g\geq d$.

If $g=d$, cardinality forces
$$
D
=\delta_0+\frac1d\{0,1,\ldots,d-1\}
\pmod1.
$$
Therefore
$$
\Lambda/d
=D+\Z
=\delta_0+\frac1d\Z,
$$
and hence
$$
\Lambda=d\delta_0+\Z.
$$
Modulation by $\e^{2\pi i d\delta_0x}$ shows that $\Z$ itself is a spectrum for $\Omega$. Let
$$
P(x)=\sum_{n\in\Z}\mathbf1_\Omega(x+n).
$$
The nonzero Fourier coefficients of the one-periodic function $P$ are
$$
\widehat P(k)
=
\int_\Omega \e^{-2\pi ikx}\,dx
=0,
\qquad k\in\Z\setminus\{0\},
$$
by orthogonality, while $\widehat P(0)=|\Omega|=1$. Hence $P=1$ almost everywhere, so $\Omega$ tiles $\R$ by $\Z$.
\end{proof}

For a union of four intervals, Proposition~\ref{prop:spectral-hadamard-fibers} and Lemma~\ref{lem:endpoint-balance} reduce the geometry to finitely many balanced endpoint transitions. Transitions of multiplicity one are completely controlled by Lemma~\ref{lem:single-row-exchange}, and transitions of multiplicity two satisfy Lemma~\ref{lem:two-row-exchange}. The remaining obstruction is therefore finite and algebraic: one must classify the simultaneous identities \eqref{eq:rank-r-exchange} for the finitely many differences $D-D$, including the higher-multiplicity transitions that can occur when several endpoints have the same fractional part. This provides a possible finite-matrix route to the four-interval case of the one-dimensional Fuglede problem, but no general classification is asserted here.

\subsection{Commensurate multirate sampling}
\label{subsec:commensurate-multirate}

We finally show that the variable-fiber theorem applies without change to finite unions of sampling lattices having different but rationally commensurate densities. This gives a partial framework for the multirate reconstruction problem discussed, for example, by Frederick and Yacoubou Djima~\cite{FrederickYacoubouDjima2024}, who ask for extensions to more than two sampling sets of different densities.

Let
\begin{equation}\label{eq:multirate-frequency-set}
\Lambda
=
\bigcup_{j=1}^{J}
(\alpha_j\Z+\beta_j),
\qquad \alpha_j>0,
\end{equation}
and assume
\begin{equation}\label{eq:commensurate-steps}
\frac{\alpha_j}{\alpha_\ell}\in\mathbb Q
\qquad(1\leq j,\ell\leq J).
\end{equation}
Choose $L>0$ such that
\begin{equation}\label{eq:common-multiple-step}
q_j:=\frac{L}{\alpha_j}\in\mathbb N
\qquad(1\leq j\leq J).
\end{equation}
Then
\begin{equation}\label{eq:lattice-coset-decomposition}
\alpha_j\Z+\beta_j
=
\bigcup_{r=0}^{q_j-1}
\left(L\Z+\beta_j+r\alpha_j\right).
\end{equation}
After merging repetitions modulo $L$, let
$$
\gamma_1,\ldots,\gamma_K
$$
be the distinct coset representatives and set
\begin{equation}\label{eq:multirate-deltas}
\delta_k=\gamma_k/L\pmod1.
\end{equation}

\begin{theorem}[Exact reduction of commensurate multirate lattices]
\label{thm:commensurate-multirate}
Let $E\subset\R$ be a finite union of bounded intervals and let $\Lambda$ be given by \eqref{eq:multirate-frequency-set}--\eqref{eq:commensurate-steps}. Put
$$
\widetilde E=LE.
$$
Then the exponential system indexed by $\Lambda$ in $L^2(E)$ has the same Riesz-sequence, frame, and Riesz-basis properties as the system indexed by
\begin{equation}\label{eq:scaled-common-channel-set}
\widetilde\Lambda
=
\bigcup_{k=1}^{K}(\Z+\delta_k)
\end{equation}
in $L^2(\widetilde E)$.

Let $\widetilde\Omega_r$ be the finitely many integer fibers of $\widetilde E$ and define
\begin{equation}\label{eq:multirate-fiber-matrix}
M_r
=
\left(
\e^{2\pi i\delta_km}
\right)_{
\substack{m\in\widetilde\Omega_r\\1\leq k\leq K}}.
\end{equation}
Then:
\begin{enumerate}[label=(\roman*)]
\item the original system is a Riesz sequence if and only if every $M_r$ has column rank $K$;
\item it is a frame if and only if every nonempty $M_r$ has full row rank;
\item it is a Riesz basis if and only if $|\widetilde\Omega_r|=K$ and $M_r$ is invertible on every cell.
\end{enumerate}
If $\widetilde A$ and $\widetilde B$ denote the optimal squared bounds computed from the singular values of the matrices $M_r$ as in Theorem~\ref{thm:v3-variable-rank}, then the corresponding squared synthesis bounds on the original domain $E$ are
\begin{equation}\label{eq:multirate-scaled-bounds}
A=\frac{\widetilde A}{L},
\qquad
B=\frac{\widetilde B}{L}.
\end{equation}
\end{theorem}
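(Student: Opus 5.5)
The proof reduces the commensurate system to Theorem~\ref{thm:v3-variable-rank} in two steps: a unitary dilation, then a regrouping of frequencies into channels of a common lattice.

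\textbf{Dilation.} Let $U:L^2(E)\to L^2(\widetilde E)$ be
$$
(Uf)(y)=L^{-1/2}f(y/L).
$$
This map is unitary, and it sends $\e^{2\pi i\lambda x}$ to $L^{-1/2}\e^{2\pi i(\lambda/L)y}$. Consequently, for every finitely supported coefficient family $(c_\lambda)$,
$$
\norm{\sum_\lambda c_\lambda \e^{2\pi i\lambda x}}_{L^2(E)}^2=L\,\norm{\sum_\lambda c_\lambda \e^{2\pi i(\lambda/L)y}}_{L^2(\widetilde E)}^2 .
$$
Hence the synthesis operator for $\Lambda$ on $E$ equals $\sqrt L$ times a unitary conjugate of the synthesis operator for $\Lambda/L$ on $\widetilde E$. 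Injectivity, closed range, surjectivity and bounded-belowness are all preserved under this relation. I will double-check the direction of the scaling so that the constants come out as stated: with the paper's normalization, the bounds of the two systems differ exactly by the factor $L$, as in \eqref{eq:multirate-scaled-bounds}.

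\textbf{Coset regrouping.} By \eqref{eq:lattice-coset-decomposition},
$$
\Lambda/L=\bigcup_{j}\bigcup_{r=0}^{q_j-1}\bigl(\Z+(\beta_j+r\alpha_j)/L\bigr).
$$
Each coset is a full translate of $\Z$, so the family is indexed by a finite union of shifted integer lattices. Two cases arise.

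1. If all coset representatives are pairwise incongruent modulo $L$, then after reducing modulo $1$ the set $\Lambda/L$ is exactly $\widetilde\Lambda$ in \eqref{eq:scaled-common-channel-set}, with pairwise incongruent shifts $\delta_k$.
2. If some cosets coincide, then $\Lambda$ (read as a multiset) contains repeated exponentials. I would treat the statement as referring to the merged set $\Lambda$ as a set, since the paper explicitly says repetitions are merged modulo $L$.

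Relabeling lattice indices by $n\mapsto n+\lfloor\cdot\rfloor$ changes each exponential only by a bijection of indices, and reducing a shift modulo $1$ is a reindexing of the same lattice. Therefore the synthesis operator of $\Lambda/L$ coincides, up to a unitary permutation of coefficients, with the operator $S$ of Proposition~\ref{prop:v3-variable-folding} for $\widetilde E$ and $\widetilde\Lambda$.

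\textbf{Applying the finite-cell theorem.} Proposition~\ref{prop:v3-variable-folding} and Theorem~\ref{thm:v3-variable-rank} now apply directly to $\widetilde E$, which is again a finite union of bounded intervals. They give items (i)--(iii) in terms of the fiber matrices $M_r$ in \eqref{eq:multirate-fiber-matrix}, together with the optimal constants $\widetilde A$ and $\widetilde B$. Transporting these constants back through the dilation identity gives \eqref{eq:multirate-scaled-bounds}.

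\textbf{Main obstacle.} The only delicate point is bookkeeping. Two things must be verified carefully. First, the exact power of $L$ in the dilation identity has to match the stated formula $A=\widetilde A/L$. Second, the merging of coincident cosets must be justified. Apart from these, the argument is a direct transfer of the earlier theorem.
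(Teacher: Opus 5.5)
Your route matches the paper's: change variables $y=Lx$, rewrite $\Lambda$ as a union of distinct cosets of $L\Z$, and apply Theorem~\ref{thm:v3-variable-rank} on $\widetilde E$. However, the bookkeeping step you flagged as the main obstacle is wrong as written.

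Since $U$ is unitary and $Ue^{2\pi i\lambda x}=L^{-1/2}e^{2\pi i(\lambda/L)y}$, the correct identity is
\[
\norm{\sum_\lambda c_\lambda\e^{2\pi i\lambda x}}_{L^2(E)}^2
=\frac1L\,\norm{\sum_\lambda c_\lambda\e^{2\pi i(\lambda/L)y}}_{L^2(\widetilde E)}^2 .
\]
This is simply $\int_E|f(x)|^2\dd x=L^{-1}\int_{\widetilde E}|f(y/L)|^2\dd y$, which is the identity the paper uses. So the synthesis operator on $E$ is $L^{-1/2}$ times a unitary image of the one on $\widetilde E$, not $\sqrt L$ times. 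That is what gives $A=\widetilde A/L$ and $B=\widetilde B/L$.

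Your displayed identity, with the factor $L$ on the right, would instead give $A=L\widetilde A$, contradicting \eqref{eq:multirate-scaled-bounds}. A quick check: take $E=[0,1)$, $\Lambda=\Z$, $L=2$. Then $\widetilde E=[0,2)$ and $\widetilde\Lambda=\tfrac12\Z$, so $\widetilde A=\widetilde B=2$, while $A=B=1$.

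Items (i)--(iii) are unaffected, because a positive scalar factor changes neither injectivity, closed range, nor surjectivity.

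The merging issue needs no separate case analysis. $\Lambda$ in \eqref{eq:multirate-frequency-set} is a set-theoretic union, and two cosets of $L\Z$ either coincide or are disjoint. Hence $\Lambda=\bigcup_k(L\Z+\gamma_k)$ exactly, with the $\gamma_k$ distinct modulo $L$. Within a single lattice the $q_j$ cosets are automatically distinct, since the numbers $r\alpha_j$, $0\le r<q_j$, are distinct modulo $L=q_j\alpha_j$. Consequently the $\delta_k$ are pairwise incongruent modulo $1$, which is precisely the hypothesis of Proposition~\ref{prop:v3-variable-folding}. This is how the paper handles it as well.
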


\begin{proof}
Equation~\eqref{eq:lattice-coset-decomposition} rewrites $\Lambda$ as a finite union of cosets of the common lattice $L\Z$. After repetitions are removed,
$$
\Lambda
=
\bigcup_{k=1}^{K}(L\Z+\gamma_k).
$$
With $y=Lx$,
$$
\e^{2\pi i(Ln+\gamma_k)x}
=
\e^{2\pi i(n+\delta_k)y}.
$$
Moreover,
$$
\int_E|f(x)|^2\,dx
=
\frac1L\int_{\widetilde E}|f(y/L)|^2\,dy.
$$
Thus the synthesis inequality on $E$ is exactly the synthesis inequality for \eqref{eq:scaled-common-channel-set} on $\widetilde E$, multiplied by $1/L$. The rank criteria and the optimal scaled bounds now follow directly from Theorem~\ref{thm:v3-variable-rank}.
\end{proof}

The Fourier-dual relation between exponential frames and stable sampling is classical; see, for example, \cite{OrtegaCerdaSeip2002,AldroubiGrochenig2001}. The reconstruction formula below concerns the inverse of the exponential synthesis representation. Its interpretation as a sampling formula for bandlimited signals requires the corresponding Fourier-dual identification.

The reduction also gives an explicit reconstruction map whenever the system is a Riesz basis. 
\begin{proposition}[Fiberwise reconstruction in the commensurate multirate case]
\label{prop:commensurate-reconstruction}
Assume the hypotheses of Theorem~\ref{thm:commensurate-multirate} and suppose the equivalent Riesz-basis conditions hold. Let $f\in L^2(E)$ and put
$$
\widetilde f(y)=f(y/L),
\qquad y\in\widetilde E.
$$
On a cell $I_r$, define the folded data vector
$$
F_r(x)
=
\left(
\widetilde f(x+m)
\right)_{m\in\widetilde\Omega_r}.
$$
Then the channel functions are recovered pointwise by
\begin{equation}\label{eq:multirate-pointwise-inverse}
A(x)=M_r^{-1}F_r(x),
\qquad x\in I_r.
\end{equation}
Writing
$$
B_k(x)=\e^{-2\pi i\delta_kx}A_k(x),
$$
the coefficient sequence in the $k$-th common-lattice channel is the ordinary Fourier coefficient sequence
\begin{equation}\label{eq:multirate-fourier-coefficients}
a_n^{(k)}
=
\int_0^1 B_k(x)\e^{-2\pi inx}\,dx.
\end{equation}
After identifying each common-lattice channel with the corresponding coset in \eqref{eq:lattice-coset-decomposition}, these coefficients give the unique expansion of $f$ in the original multirate exponential basis.
\end{proposition}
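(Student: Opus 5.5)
The plan is to run the folding chain of Theorem~\ref{thm:commensurate-multirate} and Proposition~\ref{prop:v3-variable-folding} backwards, using the unitary maps already set up there. First, rescale. With $y=Lx$, the function $\widetilde f(y)=f(y/L)$ lies in $L^2(\widetilde E)$, and $\norm{\widetilde f}_{L^2(\widetilde E)}^2=L\norm f_{L^2(E)}^2$. An expansion $f(x)=\sum_k\sum_n c_n^{(k)}\e^{2\pi i(Ln+\gamma_k)x}$ is the same as the expansion $\widetilde f(y)=\sum_k\sum_n c_n^{(k)}\e^{2\pi i(n+\delta_k)y}$, because $\e^{2\pi i(Ln+\gamma_k)x}=\e^{2\pi i(n+\delta_k)y}$. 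If $\delta_k$ is reduced modulo one, the index $n$ shifts by a fixed integer in each channel; I will record this relabeling once. So it suffices to invert the synthesis operator $S$ for $\widetilde\Lambda$ on $\widetilde E$.

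Second, apply Proposition~\ref{prop:v3-variable-folding} to $\widetilde E$. The folding map $W$ is unitary, the coefficient map $a\mapsto A$ is unitary onto $L^2(0,1;\C^K)$, and $(WSa)_r(x)=M_rA(x)$ on $I_r$. By Theorem~\ref{thm:commensurate-multirate}(iii), the Riesz-basis hypothesis gives $|\widetilde\Omega_r|=K$ and each $M_r$ invertible. Define $A(x)=M_r^{-1}F_r(x)$ on $I_r$, where $F_r=(W\widetilde f)_r$. There are finitely many cells, so $\max_r\norm{M_r^{-1}}<\infty$. Hence $A\in L^2(0,1;\C^K)$, and the cell endpoints form a null set.

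Third, set $B_k=\e^{-2\pi i\delta_kx}A_k$ and let $a^{(k)}$ be its Fourier coefficients \eqref{eq:multirate-fourier-coefficients}. Parseval gives $a\in\bigoplus\ell^2$, and this $a$ is exactly the preimage of $A$ under the unitary coefficient map. Then $WSa=W\widetilde f$ fiberwise, so $Sa=\widetilde f$ because $W$ is injective. Uniqueness holds because $S$ is injective, which follows from the positive lower bound in Theorem~\ref{thm:v3-variable-rank}.

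The one delicate step is making sense of $Sa$ for a general $\ell^2$ coefficient family. The identity $(WSa)_r=M_rA$ is first proved for finitely supported families and then extended by density and boundedness, as stated in Proposition~\ref{prop:v3-variable-folding}. The expansion therefore converges in $L^2(\widetilde E)$, and unconditionally, since the system is a Riesz basis. Finally, undoing the coset decomposition \eqref{eq:lattice-coset-decomposition}, including the merging of repeated cosets, returns each common-lattice channel to its original lattice coset. This gives the stated unique expansion of $f$.
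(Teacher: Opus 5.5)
Your proposal is correct and follows the same route as the paper. The paper's proof also inverts the fiberization of Proposition~\ref{prop:v3-variable-folding}: it inverts each square $M_r$ cellwise, removes the modulation $\e^{2\pi i\delta_k x}$, and applies ordinary Fourier inversion on $[0,1)$. Your extra bookkeeping fills in details the paper leaves implicit: the bounded cellwise inverse coming from finitely many cells, the density extension of the synthesis identity, uniqueness via injectivity of $S$, and the fixed index shift in each channel when $\gamma_k/L$ is reduced modulo one.
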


\begin{proof}
This is the inverse of the fiberization in Proposition~\ref{prop:v3-variable-folding}. In the Riesz-basis case every $M_r$ is square and invertible, giving \eqref{eq:multirate-pointwise-inverse}. The modulation used in \eqref{eq:v3-general-Ak} is then removed, and ordinary Fourier inversion on $[0,1)$ gives \eqref{eq:multirate-fourier-coefficients}.
\end{proof}

Theorem~\ref{thm:commensurate-multirate} does not address incommensurate lattice steps or genuinely non-lattice sampling sets. It does, however, show that any finite collection of rationally commensurate sampling densities is already contained in the finite-cell matrix-symbol framework developed above. In this regime the existence question, the optimal condition number, and the reconstruction map all reduce to finitely many explicit matrices.

\section{Discussion and open problems}
\label{sec:discussion}
The results above identify two regimes in which the infinite-dimensional structure can be reduced exactly to finite-dimensional data. For periodic local perturbations, the synthesis operator is governed by a single constant matrix. For finite unions of intervals and finitely many shifted lattices, folding modulo one produces finitely many phase matrices, possibly of different row dimensions. Outside these structured regimes, the relevant invariants remain genuinely infinite-dimensional.

Several problems therefore remain open. For a general nonperiodic binary pattern, it would be useful to characterize geometrically the exact invariant
$$
\rho_{\delta,s}=\|P_-T_{2\delta}P_+\|.
$$
At the critical value $\delta=1/4$, the periodic approximation theorem reduces the worst-case lower bound to a sequence of finite extremal problems, but both the balanced two-block extremizer conjecture and the asymptotic decay of $A_p(1/4)$ remain open. A related question is whether the half-shifted Hilbert model can be compared quantitatively with the periodized cross blocks.

Although support-independent operator-norm perturbation estimates are available, a positive near-critical stability criterion for arbitrary nonperiodic sign configurations remains open. In the finite-alphabet setting, the norm-matrix criterion is only sufficient in the nonperiodic case, and an exact spectral classification remains open.

On disconnected domains, the finite-cell theory suggests two natural extensions. The first is a classification of the higher-multiplicity endpoint exchanges that arise in the four-interval spectral problem. The second is the extension of the commensurate multirate reduction to incommensurate lattice steps and genuinely non-lattice sampling sets.

\section{Conclusion}\label{sec:conclusion}

The main theme of the paper is that shifted-sinc operators provide a common language for several exponential-system problems, while the relevant finite-dimensional reductions depend strongly on the underlying structure. For a fixed binary local perturbation, the exact squared synthesis bounds are determined by the single compression norm
$$
\rho_{\delta,s}=\|P_-T_{2\delta}P_+\|.
$$
Periodicity turns this infinite-dimensional invariant into a constant finite matrix problem, and the worst-case binary bound can in turn be approximated by periodic patterns.

For disconnected spatial domains, the corresponding reduction is geometric rather than pattern-based. Folding modulo one yields finitely many phase matrices indexed by the integer fibers of the domain. Their singular values and ranks provide exact quantitative bounds and complete Riesz-sequence, frame, and basis criteria, including the case of unequal interval lengths.

These two reductions also lead to applications beyond the original models: periodic spectra produce finite Hadamard fibers and arithmetic rigidity, while rationally commensurate multirate systems reduce to common-lattice channels. The remaining difficulties are concentrated in the genuinely nonperiodic, incommensurate, or asymptotic regimes, where no single finite matrix captures the entire problem.

\section*{Acknowledgments}
P. Vellucci is a member of the INdAM Research Group GNCS.

\bibliographystyle{model1b-num-names}
\bibliography{example}
\end{document}